\documentclass{article}

\PassOptionsToPackage{numbers, compress}{natbib}
\usepackage{amsmath,amssymb,amsthm,fullpage,mathrsfs,pgf,tikz,caption,subcaption,mathtools,cancel}
\usepackage[ruled,vlined,linesnumbered]{algorithm2e}
\usepackage{natbib}
\usepackage{amsmath,amssymb,amsthm,mathtools}
\usepackage[utf8]{inputenc} 
\usepackage[T1]{fontenc}    
\usepackage{hyperref}       
\usepackage{url}            
\usepackage{booktabs}       
\usepackage{amsfonts}       
\usepackage{nicefrac}       
\usepackage{microtype}      
\usepackage[margin=1in]{geometry}
\usepackage{bbm}
\usepackage{enumitem}
\usepackage{xcolor}
\usepackage{cleveref}

\let\oldnl\nl
\newcommand{\nonl}{\renewcommand{\nl}{\let\nl\oldnl}}

\usepackage{aliascnt}

\newtheorem{theorem}{Theorem}[section]

\newaliascnt{corollary}{theorem}
\newtheorem{corollary}[corollary]{Corollary}
\aliascntresetthe{corollary}

\newaliascnt{proposition}{theorem}
\newtheorem{proposition}[proposition]{Proposition}
\aliascntresetthe{proposition}

\newaliascnt{lemma}{theorem}
\newtheorem{lemma}[lemma]{Lemma}
\aliascntresetthe{lemma}

\newaliascnt{definition}{theorem}
\newtheorem{definition}[definition]{Definition}
\aliascntresetthe{definition}

\newaliascnt{conjecture}{theorem}

\aliascntresetthe{conjecture}

\newaliascnt{question}{theorem}

\aliascntresetthe{question}

\newaliascnt{example}{theorem}

\aliascntresetthe{example}

\newaliascnt{remark}{theorem}

\aliascntresetthe{remark}

\newaliascnt{observation}{theorem}

\aliascntresetthe{observation}

\newaliascnt{claim}{theorem}
\newtheorem{claim}[claim]{Claim}
\aliascntresetthe{claim}

\newcommand{\R}{\mathbb{R}}

\DeclareMathOperator{\poly}{poly}

\newcommand{\norm}[1]{\| {#1} \|}

\newcommand*\circled[1]{\tikz[baseline=(char.base)]{
            \node[shape=circle,draw,inner sep=2pt] (char) {#1};}}

\title{Toward a KKL Theorem for any HDX}

\author{
Max Hopkins\thanks{Institute for Advanced Study, Princeton. nmhopkin@ias.edu. Supported by NSF Award DMS-2424441}
}

\begin{document}

\maketitle

\begin{abstract}

The KKL Theorem, a seminal result in boolean function analysis, characterizes the structure of low-influence (non-expanding) functions on the hypercube. While recent years have seen breakthrough results across a variety of areas relying on analogs of the KKL Theorem \textit{beyond the cube} (e.g.\ on product spaces, Grassmann graphs), further progress has been inhibited by our poor understanding of the phenomenon across more general domains. Motivated in this context, Bafna, Hopkins, Kaufman, and Lovett (STOC 2022) and Gur, Lifshitz, and Liu (STOC 2022) proved a generalized KKL-type Theorem for spectral high dimensional expanders (HDX). Their results, however, remain highly restricted due to strong quantitative expansion requirements on the underlying complex.

In this work, we introduce a simple local-to-global method for analyzing low influence functions on simplicial complexes. Using this method we prove a local-to-global KKL-type Theorem: any simplicial complex whose links satisfy a KKL-Theorem also satisfies such a result globally. Building on Gotlib and Kaufman (RANDOM 2023), we also prove a weaker dimension-dependent KKL-type Theorem for simplicial complexes with \emph{any non-trivial (two-sided) expansion}. As concrete applications of our framework, we give the first characterization of non-expanding functions on `combinatorial' HDX such as dense clique complexes and a corresponding Kruskal-Katona Theorem, as well as a small-set expansion theorem for the Ramanujan Complexes of Lubotzky, Samuels, and Vishne (EJC '05).
\end{abstract}
\section{Introduction}
In 1988, Kahn, Kalai, and Linial \cite{kahn1988influence} proved the KKL Theorem: a characterization of low influence (non-expanding) functions\footnote{Informally, a low-influence function on the hypercube is one which has low expected probability of changing value under a random coordinate flip.} on the hypercube that revolutionized the field of boolean function analysis.
In recent years, variants of the KKL Theorem \textit{beyond the cube}, including on products \cite{bourgain1992influence,talagrand1994russo,friedgut1996every,friedgut1998boolean,hatami2012structure,keevash2019hypercontractivity,lifshitz2019noise}, Lie groups \cite{ellis2024product}, and the Grassmann \cite{subhash2018pseudorandom,ellis2023analogue} have become a powerful tool across a surprisingly broad array of areas, leading, e.g., to the theory of sharp thresholds \cite{friedgut1999sharp}, multiple breakthroughs in PCP theory \cite{subhash2018pseudorandom,khot2017independent,dinur2018towards,dinur2018non,barak2018small,khot2018small,minzer2024near}, new streaming lower bounds \cite{fei2025multi}, and to progress on longstanding problems in group theory and combinatorics \cite{ellis2024product,keller2023sharp,keevash2024largest,keller2024t,green2024new}. Despite its broad importance our understanding of KKL-type theorems on general spaces remains poor, standing as a barrier to further progress on key problems such as the unique games conjecture \cite{barak2018small}.


Toward this end, a series of works over the past several years \cite{kaufman2020high,dikstein2018boolean,bafna2020high,bafna2021hypercontractivity,gur2021hypercontractivity,gaitonde2022eigenstripping,hopkins2025hypercontractivity} developed a new framework for unifying our understanding of Fourier analysis and KKL-type Theorems beyond the cube: \textit{spectral high dimensional expanders} (HDX). HDX are a generalization of expanders to hypergraphs and ranked posets that have seen a recent explosion of application within theoretical computer science \cite{kaufman2020high,dikstein2018boolean,bafna2020high,bafna2021hypercontractivity,gur2021hypercontractivity,gaitonde2022eigenstripping,dinur2017high,dikstein2019agreement,kaufman2020local,alev2019approximating,dinur2020explicit,hopkins2022explicit,Beaglehole2022SamplingEF,anari2019log,alev2020improved,anari2020spectral,chen2020rapid,chen2021optimal,chen2021rapid,feng2021rapid,jain2021spectral,liu2021coupling,blanca2021mixing,dinur2021,panteleev2021asymptotically,dinur2023new,dikstein2024chernoff,dikstein2024low,bafna2024constant,bafna2025quasi,hsieh2025explicit,gur20253,dikstein2026high}. In particular, Bafna, Hopkins, Kaufman, and Lovett \cite{bafna2020high,bafna2021hypercontractivity}, Gur, Lifshitz, and Liu \cite{gur2021hypercontractivity}, and Hopkins \cite{hopkins2025hypercontractivity} proved a KKL-Type Theorem for certain spectral high dimensional expanders stating that any low influence function must be \textit{local}, highly concentrated in some restriction of the hypergraph. Unfortunately, these results only hold for the regime of \textit{near-perfect} expansion, a strong requirement satisfied only by a few families of sparse hypergraphs \cite{lubotzky2005explicit,kaufman2018construction,dikstein2023new}.

In this work, we develop simple local-to-global machinery for analyzing low influence functions on hypergraphs allowing us to remove these requirements in certain well-studied settings. We prove two main results: a KKL-Type Theorem for simplicial complexes whose links (local restrictions) themselves satisfy such a bound such as quotients of the affine building \cite{lubotzky2005explicit} (the so-called Ramanujan complexes), and a weaker (dimension-dependent) KKL-type Theorem for simplicial complexes with \textit{any} non-trivial (two-sided) spectral high dimensional expansion allowing us to give the first non-trivial characterization of low influence functions on product-based HDX \cite{golowich2021improved} and dense clique complexes. We also derive Kruskal-Katona theorems for these objects, a broadly used set of results in extremal combinatorics bounding the lower shadow of a set system.

Our work builds on a growing body of Fourier analytic tools for high dimensional expanders \cite{kaufman2020high,dikstein2018boolean,bafna2020high,bafna2021hypercontractivity,gur2021hypercontractivity,gaitonde2022eigenstripping,dikstein2019agreement,alev2019approximating,gotlib2022fine} and relies in particular on a decomposition theorem of Gotlib and Kaufman (GK) \cite{gotlib2022fine}. We make two main technical contributions in this line. First, we show for any hypergraph $X$ whose \textit{negative} local spectra is bounded away from $-1$, `global' functions on $X$ (i.e.\ functions not concentrated in any local component) have bounded projection onto low levels of the GK-decomposition. Second, we consider an \textit{inductive} approach for bounding expansion through a new `function-dependent' variant of Garland's Lemma, a way to write the expansion of $f$ as an expectation across local parts of the complex where, unlike prior such methods, the underlying distribution may depend on the function $f$ in question. In a similar vein, our local-to-global theorem for (global) small-set expansion combines this latter Garland method with Fourier analytic properties of a localized noise operator to derive global expansion bounds.

\subsection{Background}\label{sec:intro-back}
\paragraph{High Dimensional Expanders:} We focus on weighted \textit{pure simplicial complexes} $(X,\Pi)$ where
\[
X=X(0) \cup \ldots \cup X(d)
\]
for $X(d) \subset {n \choose d}$ an arbitrary $d$-uniform hypergraph and $X(i) \subset {n \choose i}$ given by downward closure, and
\[
\Pi=(\pi_0,\ldots,\pi_d)
\]
for $\pi_d$ an arbitrary distribution over $X(d)$ and $\pi_i$ given by selecting a uniformly random $i$-set from $\tau \sim \pi_d$.

Our bounds will depend on the now standard notion of high dimensional expansion which bounds the behavior of local neighborhoods of $X$ called \textit{links}. For every face $\tau \in X(i)$, the link of $\tau$ is given by:
\[
X_\tau \coloneqq \{\sigma \setminus \tau \in X: \tau \subseteq \sigma \in X\}
\]
Given a function $f: X(k) \to \R$, its localization $f|_\tau:X_\tau(k-|\tau|) \to \R$ is $f_\tau(\sigma)=f(\tau \cup \sigma)$.

Denote the normalized adjacency matrix of the graph underlying $X_\tau$ by $A_\tau$, and for $0 \leq j \leq d-2$ denote the worst-case spectral parameters of $j$-links by
\[
\gamma_j \coloneqq \max_{\tau \in X(j)}\{\lambda_2(A_\tau)\}, \quad \quad \gamma_j^{(-)} \coloneqq \min_{\tau \in X(j)}\{\lambda_{\text{min}}(A_\tau)\}.
\]
A complex is called \textit{strongly connected} if every $\gamma_j < 1$, a (one-sided) \textit{local-spectral expander} if every $\gamma_j$ is bounded away from $1$ \cite{kaufman2016isoperimetric,oppenheim2018local,dinur2017high}, and a (two-sided) local-spectral expander if every $\gamma_j^{(-)}$ is additionally bounded away from $-1$.

\paragraph{Influence and the Down-Up Walk:} As in the seminal work of KKL, we are interested in characterizing the structure of boolean functions $C_k(X,\mathbb{F}_2) \coloneqq \{f: X(k) \to \{0,1\}\}$ with low total influence. In our setting it will be convenient to use an equivalent definition (up to normalization) concerning the expansion of a celebrated random walk on complexes called the \textit{lower} or \textit{down-up} walk. The lower walk at level $k$, denoted $N_k^1$, moves between $k$-faces of the complex via a shared $(k-1)$-face, removing a vertex uniformly at random and re-sampling from the appropriate conditional distribution on the remaining $(k-1)$-face. The \textit{edge expansion} of a set $S \subset X(k)$ with respect to the lower walk is the expected probability of leaving $S$ in a single step, or equivalently:
\[
\Phi(S) = 1-\frac{\langle N_k^11_S, 1_S \rangle}{\langle 1_S,1_S \rangle}.
\]
It is not hard to see that expansion, which can also be written as $\frac{\langle 1_S,L1_S \rangle}{\langle 1_S,1_S \rangle}$ for the `Laplacian Operator' $L$, is equivalent to standard notions of influence up to normalization (see \Cref{sec:background} for details), so we will focus on this notion throughout instead.

\paragraph{Small-Set Expansion and the Noise Operator:} The small-set expansion theorem is a classical and closely related result to the KKL theorem \cite{ahlswede1976spreading,kahn1988influence}. In its standard form, it states that small sets on the noisy cube expand near perfectly. To prove an analog result beyond the cube, we need to define a corresponding notion of the noise operator $T_\rho$ on simplicial complexes. A natural generalization is to consider the walk on $k$-faces of $X$ which fixes each vertex with probability $\rho$, and re-samples the remaining vertices. Formally, we may express this operator as a convex combination of `longer' down-up walks
\[
T_\rho \coloneqq \sum\limits_{i=0}^k {k \choose i}(1-\rho)^{k-i} \rho^{i}N_k^{k-i},
\]
where $N_k^{j}$ moves between $k$-faces via a shared $(k-j)$-face. Note this recovers the standard operator when $X$ is a product space (see \cite{bafna2021hypercontractivity}). We will always write expansion with respect to the noise operator at $\Phi_{T_\rho}$, and write just $\Phi$ for the down-up walk. Finally, it will sometimes be useful to reference the opposite variant of the lower walks, the `upper' walks $\widehat{N}_k^i$, which moves between $k$-faces via a shared $(k+i)$-face.
\subsection{Results}
We are now ready to more formally cover our results. We split the subsection into two parts: structure theorems for low-influence functions on arbitrary HDX, and a local-to-global small-set expansion theorem.
\subsubsection{A Structure Theorem for Low Influence Functions}
The canonical examples of low influence (non-expanding) sets on simplicial complexes are \textit{links}.\footnote{Note here we really mean the set of $k$-faces including some fixed $i$-face, sometimes called a star (rather than removing the common face as in the formal definition of a link).} Our goal is to show these are the \textit{only} such examples. To this end, we call a function $f$ $(\varepsilon,i)$-global if it is uncorrelated with $i$-links:
\[
\forall \tau \in X(i): \underset{X_\tau}{\mathbb{E}}[f_\tau] \leq \mathbb{E}[f]+\varepsilon.
\]
We prove that global functions indeed expand, where the quantitative parameters depend on the underlying local-spectral expansion of the complex.

\begin{theorem}[Expansion of Global Functions (\Cref{thm:expand})]\label{thm:intro-exp}
Let $(X,\Pi)$ be a $k$-dimensional simplicial complex and $f: X(k) \to \mathbb{R}$ any $(\varepsilon,i)$-global boolean function. Then
\[
\Phi(f) \geq \frac{1-\mathbb{E}[f]}{k-i}\prod\limits_{j=i}^{k-2}(1-\gamma_j) - c_{k,i,\gamma}\varepsilon.
\]
where $c_{k,i,\gamma} \leq (k-i+1)\left(\frac{1}{k-i}\prod\limits_{j=i}^{k-2}(1-\gamma_j)-\frac{1}{k-i+1}\prod\limits_{j=i-1}^{k-2}(1-\gamma_j) \right)\left(1+(k-i)\gamma^{(-)}_{i-1}\right)^{-1}$
\end{theorem}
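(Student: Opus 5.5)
We will argue by induction on $k-i$, using a function-dependent Garland decomposition of the down-up walk. The base case is $i=k-1$. There the lower walk $N_k^1$ is analyzed directly: globality at level $k-1$ says that every $(k-1)$-link sees density at most $\mathbb{E}[f]+\varepsilon$. Since $f$ is boolean, a step of the down-up walk from $\sigma\in S$ passes through a uniformly random $(k-1)$-face $\tau\subset\sigma$. It then stays in $S$ with probability $\mathbb{E}_{X_\tau}[f_\tau]\le \mathbb{E}[f]+\varepsilon$. This gives $\Phi(f)\ge 1-\mathbb{E}[f]-\varepsilon$, which matches the claimed bound with an empty product and $k-i=1$.

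For the inductive step we write the lower-walk expansion of $f$ at level $k$ as an average over vertex links, via Garland's method. The key identity is
\[
\langle f, L f\rangle = \frac{k-1}{k}\,\mathbb{E}_{v\sim \pi_1}\big[\langle f_v, L_v f_v\rangle\big] + \frac{1}{k}\big(\text{a ``projection to vertices'' term}\big),
\]
where $L_v$ is the down-up Laplacian in the link $X_v$ at level $k-1$. This is the usual decomposition of $N_k^1$ by the vertex that is kept. The function-dependent twist is that we reweight by the mass of $f$. Dividing by $\langle f,f\rangle$ gives $\Phi(f)$ as an expectation of $\Phi_{X_v}(f_v)$ over the distribution $v\propto \pi_1(v)\mathbb{E}[f_v]$, plus correction terms. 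Each $f_v$ is $(\varepsilon',i-1)$-global in $X_v$ relative to its own mean, since links of $\tau\cup\{v\}$ in $X$ are links of $\tau$ in $X_v$. The link $X_v$ has spectral parameters $\gamma_{j+1}$, so the inductive hypothesis applies to it with the product shifted by one index. The remaining issue is that $\mathbb{E}[f_v]$ may exceed $\mathbb{E}[f]$, which makes the factor $1-\mathbb{E}[f_v]$ lossy.

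To control this we use the two spectral facts. First, the vertex-level term is bounded using $\lambda_2(A_\emptyset)\le\gamma_0$: the local-to-global gain $(1-\gamma_j)$ enters when we relate the variance of $v\mapsto \mathbb{E}[f_v]$ to the expansion of $f$. This is the standard trickle-down/Garland comparison of the $N_k^1$ spectrum with the link spectrum, as in Gotlib–Kaufman, and it produces the factor $\prod(1-\gamma_j)$ with the $\tfrac{1}{k-i}$ normalization. Second, we need that a global $f$ has small mass on the low levels of the GK decomposition. We bound $\sum_v \pi_1(v)\mathbb{E}[f_v]^2 - \mathbb{E}[f]^2$ by $O(\varepsilon)$ divided by $1+(k-i)\gamma^{(-)}_{i-1}$. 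To do this, we write the $i$-level averaging operator $D^*D$ applied to $f$ as a polynomial in the walk. The negative spectrum of the relevant $(i-1)$-links is bounded below by $\gamma^{(-)}_{i-1}$, so this operator is bounded below by $1+(k-i)\gamma^{(-)}_{i-1}>0$. Inverting it converts the $\varepsilon$-closeness of all $i$-link averages into an $\ell_2$ bound on the low-level projection. The constant $c_{k,i,\gamma}$ is then the difference between the two bounds at parameters $(k,i)$ and $(k,i-1)$, times this inverse.

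We expect the main obstacle to be the last step: carrying out the negative-spectrum lower bound precisely enough to recover exactly the factor $(1+(k-i)\gamma^{(-)}_{i-1})^{-1}$, and checking that the $\varepsilon$-losses do not compound across the induction. Our first attempt will be to avoid compounding entirely. We write the telescoping difference of consecutive bounds explicitly and apply the GK low-level bound only once, at the bottom level $i$.
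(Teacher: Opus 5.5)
Your outline names the right ingredients: function-dependent Garland, the GK bound, and a level-one inequality driven by the negative spectrum. It does not yet assemble them into a proof, and three specific points fail.

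First, the induction is not well-founded. Localizing to a vertex link turns an $(\varepsilon,i)$-global $f$ on $X(k)$ into $f_v$ on $X_v(k-1)$ that is global at level $i-1$. Since $(k-1)-(i-1)=k-i$, you invoke the hypothesis at the same value of your induction parameter. For $k-i\ge 2$ your base case $i=k-1$ is therefore never reached, even though its computation $\Phi(f)\ge 1-\mathbb{E}[f]-\varepsilon$ is correct. Iterating vertex localizations bottoms out instead at $i=1$ inside a $(k-i+1)$-dimensional link. That case, which you never prove, is where all the spectral content lives. Also, Garland for $N^1_k$ is exact: $\Phi(f)=\mathbb{E}_{\tau\sim\pi^f_{i-1}}[\Phi_{X_\tau}(f|_\tau)]$, with no $\tfrac{k-1}{k}$ versus $\tfrac1k$ split and no vertex-projection correction. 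The paper simply localizes once, directly to $(i-1)$-links.

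Second, the $i=1$ case runs as follows. Start from the GK bound in the KO decomposition, $\langle N^1_kf,f\rangle\le\mathbb{E}[f]^2+\sum_{\ell\ge1}\lambda_{k,\ell}\langle f,f_\ell\rangle$, where $\lambda_{k,\ell}=1-\frac{1}{k-\ell+1}\prod_{j=\ell-1}^{k-2}(1-\gamma_j)$ is non-increasing in $\ell$. Booleanity gives $\sum_\ell\langle f,f_\ell\rangle=\mathbb{E}[f]$, so all mass above level one is charged at rate $\lambda_{k,2}$, leaving only the excess $(\lambda_{k,1}-\lambda_{k,2})\langle f,f_1\rangle$. Then $\langle f,f_1\rangle\le\mathrm{Var}(D^k_1f)/\lambda_{\min}(D^k_1U^k_1)$, using $D^k_1U^k_1=\tfrac1kI+\tfrac{k-1}{k}M^+_1$. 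You aim the spectral inversion at the wrong quantity. The variance $\sum_v\pi_1(v)\mathbb{E}[f_v]^2-\mathbb{E}[f]^2=\mathrm{Var}(D^k_1f)$ is already at most $\varepsilon\mathbb{E}[f]$ by globality alone, with no spectral input. The negative spectrum is needed only to pass from this variance to the KO level-one mass $\langle f,f_1\rangle$. Moreover, the parameters $\gamma^{(-)}_{i-1}$ and $k-i$ in the stated constant arise only when this step is carried out inside $(i-1)$-links, not at the vertex level.

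Third, the density shift is absorbed by a cancellation you do not identify. Write $\mathbb{E}[f|_\tau]=\mathbb{E}[f]+\delta_\tau$; then $f|_\tau$ is $(\varepsilon-\delta_\tau,1)$-global, so a denser link comes with a correspondingly better local globality parameter. Let $C$ denote the stated bound on $c_{k,i,\gamma}$. Plugging the $i=1$ bound into each $(i-1)$-link gives $\Phi(f)\ge(1-\mathbb{E}[f])(1-\lambda_{k,i+1})-C\varepsilon+\bigl(C-(1-\lambda_{k,i+1})\bigr)\mathbb{E}_{\tau\sim\pi^f_{i-1}}[\delta_\tau]$. Here $C\ge 1-\lambda_{k,i+1}$, because $(k-i+1)(\lambda_{k,i}-\lambda_{k,i+1})=(\tfrac{1}{k-i}+\gamma_{i-1})\prod_{j=i}^{k-2}(1-\gamma_j)$ and $\gamma_{i-1}\ge 0\ge\gamma^{(-)}_{i-1}$. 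Also $\mathbb{E}_{\pi^f_{i-1}}[\delta_\tau]=\mathrm{Var}(D^k_{i-1}f)/\mathbb{E}[f]\ge 0$, so the last term can be dropped. The same cancellation is what stops $\varepsilon$-losses from compounding if you iterate vertex localizations instead. Your route of controlling the variance of $v\mapsto\mathbb{E}[f_v]$ through $\lambda_2(A_\emptyset)\le\gamma_0$ does not produce it. Bounding the shift crudely by $\varepsilon$ costs an extra $(1-\lambda_{k,i+1})\varepsilon$, so you would not recover the stated constant $c_{k,i,\gamma}$.
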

For many regimes of interest, e.g.\ when either $\gamma_{k-2}$ or $|\gamma^{(-)}_{k-2}|$ is less than roughly $\frac{1}{k}$, $c_{\gamma,k,i} \leq O(1)$ is an \textit{absolute constant}, with no dependence on dimension. In such cases the contrapositive of \Cref{thm:intro-exp} states \textit{any non-expanding function has constant density in some link}. We discuss the relation of this bound to prior literature further in \Cref{sec:discussion}. It remains an interesting open problem whether the expression is tight, or could be improved to match the expansion profile known under stronger expansion guarantees.

As applications of \Cref{thm:intro-exp}, we give the first structure theorems for simpler combinatorial constructions of high dimensional expanders. We'll start by looking at one of the most classical constructions of simplicial complexes: \textit{clique-complexes}. Given a graph $G=(V,E)$, the $k$-dimensional clique-complex $K_{G,k}$ is the complex induced by taking the uniform distribution over $k$-cliques of $G$. Using \Cref{thm:intro-exp}, we give the first non-trivial characterization of non-expanding sets on (dense) clique-complexes.
\begin{corollary}[Expansion in Clique-Complexes (\Cref{thm:clique})]\label{cor:intro-clique}
Fix $k \in \mathbb{N}$ and let $G=(V,E)$ be any graph with minimum degree at least $\Delta_{\text{min}} \geq \frac{2k-2}{2k-1}|V|$. Then the expansion of any $(\varepsilon,i)$-global boolean function $f$ on the $k$-dimensional clique-complex $K_{G,k}$ is at least:
\[
\Phi(f) \geq (1-\mathbb{E}[f])\frac{(i+1)}{k(k-i)} - O(\varepsilon)
\]
\end{corollary}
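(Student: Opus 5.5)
The plan is to apply \Cref{thm:intro-exp} directly to $K_{G,k}$, which reduces the corollary to bounding the local spectral parameters $\gamma_j$ and $\gamma_j^{(-)}$ of the clique complex.

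\textbf{Links of the clique complex.} For $\tau \in X(j)$ (a $j$-clique), the link $X_\tau$ is the $(k-j)$-dimensional clique complex of the common-neighborhood graph $G_\tau$. This is the subgraph of $G$ induced on $V_\tau = \bigcap_{v \in \tau} N(v)$, with the weighting induced by the uniform distribution on $k$-cliques. The minimum degree condition $\Delta_{\min} \geq \frac{2k-2}{2k-1}|V|$ shows that each vertex misses at most $\frac{|V|}{2k-1}$ others. Hence $|V_\tau| \geq |V|(1-\frac{j}{2k-1})$, and inside $V_\tau$ every vertex misses at most $\frac{|V|}{2k-1}$ vertices. So $G_\tau$ is a dense graph: its complement has maximum degree at most a $\frac{1}{2k-1-j}$ fraction of its vertex count.

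\textbf{Spectral bounds.} The underlying graph of $X_\tau$ is $G_\tau$ weighted by the number of $k$-cliques extending each edge. I would show this weighted adjacency is close to that of a complete multipartite-like dense graph. The cleanest route is Garland-style trickling down from the top links. The $(k-2)$-links are graphs on $V_\tau$ weighted by edges, with normalized adjacency $A = (J - I - \bar A)/\text{deg}$ up to degree normalization. For a graph whose complement has max degree at most $\delta n$, every nontrivial eigenvalue of the normalized adjacency lies in roughly $[-\frac{1+\delta n}{(1-\delta)n}, \frac{\delta}{1-\delta}]$. With $\delta \le \frac{1}{2k-1-j}$ this gives $\gamma_{k-2} \lesssim \frac{1}{k}$ and $|\gamma^{(-)}_{k-2}|\lesssim \frac1k$.

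I would then use Oppenheim's trickling-down theorem, $\gamma_{j-1} \le \frac{\gamma_j}{1-\gamma_j}$, to control the lower links, or directly repeat the density argument at each level, since every link is itself a dense clique complex. The target is $\gamma_j \leq \frac{1}{k-j}$, or something of that form, so that the telescoping product is
\[
\prod_{j=i}^{k-2}(1-\gamma_j) \ge \prod_{j=i}^{k-2}\frac{k-j-1}{k-j} = \frac{1}{k-i}\cdot\text{(adjustment)}.
\]
Matching the claimed $\frac{(i+1)}{k}$ factor suggests the correct bound is $\gamma_j \leq \frac{1}{j+2}$-type. I would compute it precisely: with $\delta$ complement density, $1-\gamma_j \geq \frac{j+1}{j+2}$, so the product telescopes to $\frac{i+1}{k}$. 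I would calibrate the degree threshold $\frac{2k-2}{2k-1}$ to make exactly this hold.

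\textbf{Constant $c$.} The negative eigenvalues are bounded by $O(1/k)$, so $1+(k-i)\gamma^{(-)}_{i-1}$ is bounded below by a constant. Combined with the formula in \Cref{thm:intro-exp}, this gives $c_{k,i,\gamma}=O(1)$ and yields the $O(\varepsilon)$ term.

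The main obstacle is the precise spectral computation for the clique-weighted link graphs. The edge weights (number of extending cliques) are not uniform, so I would bound them between $(1\pm\delta')$ times their average using the degree condition, then apply a perturbation argument to bound both $\lambda_2$ and $\lambda_{\min}$. I expect getting the exact constant $\frac{i+1}{k}$ to require care here.
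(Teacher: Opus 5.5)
Your plan is essentially the paper's proof. The paper, phrased via independent sets in $\bar G$, analyzes only the co-dimension-$2$ links, which carry uniform edge weights. It shows $\gamma_{k-2}\le\frac1k$ (via Alev--Lau) and $\gamma^{(-)}_{k-2}\ge-\frac1k$ through the same $J-I-\bar A$ decomposition you describe, then applies Trickling-Down and plugs into \Cref{thm:expand}.

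The obstacle you anticipate never arises, because nothing needs to be calibrated. At $j=k-2$ your own estimate gives $\delta\le\frac{1}{k+1}$, hence $\gamma_{k-2}\le\frac{\delta}{1-\delta}=\frac1k$. \Cref{thm:trickling} then gives exactly $\gamma_j\le\frac{1}{j+2}$, so $\prod_{j=i}^{k-2}(1-\gamma_j)\ge\frac{i+1}{k}$. On the negative side it gives $\gamma^{(-)}_{i-1}\ge-\frac{1}{2k-1-i}$, so $(k-i)|\gamma^{(-)}_{i-1}|\le\frac12$ and $c_{k,i,\gamma}=O(1)$. No perturbation analysis of the clique-weighted lower links is needed.
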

While \Cref{cor:intro-clique} is non-trivial in any dimension, it is strongest in low dimensions. For instance, an interesting implication for $3$-clique (triangle) complexes is that any function of triangles with expansion worse than $1/3$ must have a dense vertex.
\begin{corollary}[Low Influence Functions on Triangle Complexes (\Cref{cor:clique})]
Let $G=(V,E)$ be any graph of minimum degree at least $\Delta_{\text{min}} \geq \frac{5}{6}|V|$ and $S \subset K_{G,3}(3)$ any set with expansion
\[
\Phi(S) \leq \frac{1}{3}-\delta.
\]
Then there exists a vertex $v \in G$ such that $S$ contains at least a $\frac{\delta}{2}$-fraction of the triangles touching $v$.
\end{corollary}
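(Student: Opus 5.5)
The plan is to specialize \Cref{thm:intro-exp} to $k=3$, $i=1$ on the triangle complex $K_{G,3}$ and then argue by contrapositive. Going through the general theorem rather than \Cref{cor:intro-clique} is deliberate: the $O(\varepsilon)$ in the corollary hides a constant, and here we need that constant to be at most $2$. For $k=3$, $i=1$ the main term is
\[
\frac{1-\mathbb{E}[f]}{k-i}\prod_{j=1}^{1}(1-\gamma_j)=\frac{(1-\mathbb{E}[f])(1-\gamma_1)}{2}.
\]
We will want this to be at least $(1-\mathbb{E}[f])/3$, matching $\frac{i+1}{k(k-i)}=\frac13$, so we need $\gamma_1\le 1/3$. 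The error constant is
\[
c_{3,1,\gamma}\le 3\Big(\tfrac12(1-\gamma_1)-\tfrac13(1-\gamma_0)(1-\gamma_1)\Big)\big(1+2\gamma_0^{(-)}\big)^{-1}.
\]

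\textbf{Step 1: spectral bounds from the degree condition.} Write $n=|V|$. The $0$-link is $G$ itself, with roughly degree-weighted measure since weights come from triangles. For a vertex $v$, the link is the graph induced on $N(v)$. There $|N(v)|\ge \tfrac56 n$, and each neighbour misses at most $n/6$ vertices, so the induced minimum degree is at least $\tfrac45|N(v)|$ up to lower-order terms. Writing the normalized adjacency as (all-ones minus identity minus complement)$/\deg$, every non-trivial eigenvalue lies in roughly $[-(1+\bar\Delta)/\Delta,\ \bar\Delta/\Delta]$, where $\bar\Delta$ is the maximum complement degree. This gives $\gamma_0\le 1/5$, $\gamma_0^{(-)}\ge -1/5$ and $\gamma_1\le 1/4$, up to $O(1/n)$. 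The main obstacle is doing this carefully in the weighted setting, where vertices are weighted by triangle-degree and edges by the number of common neighbours. I would bound the ratio of maximum to minimum weights by $1+O(1/6)$ and absorb that into the eigenvalue estimates, or else cite whatever spectral lemma underlies \Cref{cor:intro-clique}, which presumably already gives $\gamma_j\le \frac{1}{2k-1-?}$-type bounds in this degree regime.

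\textbf{Step 2: the constants.} With $\gamma_1\le 1/3$ the main term is at least $(1-\mu)/3$, where $\mu=\mathbb{E}[1_S]$. For the error term, using $\gamma_1\ge 0$, $\gamma_0\le 1/5$ and $\gamma_0^{(-)}\ge -1/5$,
\[
c\le 3\Big(\tfrac12-\tfrac13\cdot\tfrac45\Big)\Big/\tfrac35=\tfrac{7}{10}\cdot\tfrac53=\tfrac76\le 2 .
\]
If $\gamma_1$ can be negative, the bound $\big(\tfrac12-\tfrac13(1-\gamma_0)\big)(1-\gamma_1)$ still stays well below the needed slack, since $|\gamma_1|\le 1/4$.

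\textbf{Step 3: conclude.} Set $\varepsilon=\max_v\mathbb{E}_{X_v}[1_S]-\mu$, which is nonnegative because the link densities average to $\mu$. Then $1_S$ is $(\varepsilon,1)$-global, so
\[
\tfrac13-\delta\ \ge\ \Phi(S)\ \ge\ \tfrac{1-\mu}{3}-2\varepsilon,
\]
which gives $\delta\le \mu/3+2\varepsilon\le 2(\mu+\varepsilon)$. Hence some vertex $v$ has $\mathbb{E}_{X_v}[1_S]=\mu+\varepsilon\ge \delta/2$. That is, $S$ contains at least a $\delta/2$ fraction of the triangles through $v$, as claimed.
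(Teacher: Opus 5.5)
Your route is the paper's: plug spectral bounds forced by the degree condition into \Cref{thm:expand} at $k=3$, $i=1$, then take the contrapositive. The paper packages the spectral step as \Cref{thm:clique}, getting $\gamma_1\le\frac13$ and $\gamma_1^{(-)}\ge-\frac13$ on vertex links and then trickling down.

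The one soft spot is your direct estimate on the $0$-link. Its edge weights are common-neighbourhood sizes, which can range from about $2n/3$ to $n$. So absorbing a ``$1+O(1/6)$'' weight ratio does not establish $\gamma_0\le\frac15$.

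You don't need that bound. Each vertex link is the simple walk on $G[N(v)]$, with complement degree below $n/6$ and minimum degree at least $|N(v)|-n/6\ge\frac45|N(v)|$. Your own estimate then gives $\gamma_1\le\frac14$ and $\gamma_1^{(-)}\ge-\frac14$. The complex is strongly connected, since $G$ and every $G[N(v)]$ are connected. So \Cref{thm:trickling} gives $\gamma_0\le\frac13$ and $\gamma_0^{(-)}\ge-\frac15$, whence
\[
c_{3,1,\gamma}=\frac{(1-\gamma_1)(1+2\gamma_0)}{2(1+2\gamma_0^{(-)})}\le\frac{25}{18}<2 .
\]

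With this, your endgame $\delta\le\mu/3+2\varepsilon\le 2(\mu+\varepsilon)$ is valid. It is slightly more economical than the paper's, which assumes without loss of generality $\mathbb{E}[f]\le\delta/2$ and therefore needs the error constant to be at most $5/3$. Yours tolerates any constant up to $2$.
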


Of course, clique-complexes arising from dense graphs are themselves dense objects. A major selling points of prior work was the ability to characterize low-influence functions on several families of \textit{sparse} complexes. While our work cannot match the quantitative strength of these results, it does lead to new insight on the structure of functions on simpler constructions of bounded-degree HDX with quantitatively weaker parameters. In particular, we consider (cutoffs of) the elementary combinatorial HDX of Golowich \cite{golowich2021improved}, which tensors a bounded-degree expander $G$ with the complete complex to give a weak bounded-degree two-sided HDX.
\begin{corollary}[Expansion in Combinatorial HDX (\Cref{cor:prod})]
Let $X$ be a $2k$-dimensional `product-HDX' of any graph $G$ and $\Delta_{4k}(2k)$, and $f \in C_k$ any $(\varepsilon,i)$-global boolean function. Then
\[
\Phi(f) \geq \frac{(1-\mathbb{E}[f])i}{(k-1)(k-i)} - O(\varepsilon)
\]
\end{corollary}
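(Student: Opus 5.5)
The plan is to apply \Cref{thm:intro-exp} directly to the product-HDX, so the work is entirely in computing its local spectral parameters. I take the standard description of Golowich's construction: $X(2k)$ consists of sets $\{(v_1,1),\ldots,(v_{2k},2k)\}$, one from each of $2k$ ``colors'' chosen out of $[4k]$, with each consecutive pair $(v_j,v_{j+1})$ (or each pair, depending on the variant) linked by the edge structure of $G$, weighted so that the link of any face factors as a partite/complete-complex part tensored with a $G$-dependent part. First I compute the link graphs. For a face $\tau\in X(j)$, the link graph $A_\tau$ should decompose as a convex combination, or tensor product, of the complete partite graph coming from $\Delta_{4k}(2k)$ with remaining color set of size $4k-j$, together with walks along $G$. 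The complete-complex factor has second eigenvalue $0$ and most negative eigenvalue $-\frac{1}{2k-j-1}$. The $G$ factor contributes at most its second eigenvalue in the positive direction, but only through terms whose coefficient is bounded away from $1$.

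The aim is $\gamma_j\le \frac{1}{2k-j}$, roughly, independent of $G$ up to lower order, or at least $1-\gamma_j\ge \frac{2k-j-1}{2k-j}$, together with $\gamma^{(-)}_{j}\ge -\frac{1}{2k-j-1}$. I expect this spectral computation to be the main obstacle. The $G$-edges can contribute positive eigenvalue mass, so I would bound $\lambda_2(A_\tau)$ by writing $A_\tau=\alpha J+(1-\alpha)B$ with $B$ the $G$-dependent part. I would then use only $\|B\|\le1$ on the orthogonal complement, accepting a weaker bound of the form $\gamma_j\le\frac{1}{2k-j}$-ish. Restricting to level $k$ inside a $2k$-dimensional complex (the ``cutoff'') is what keeps every relevant link of dimension at least $k$, so that all $\gamma_j$ with $j\le k-2$ are $O(1/k)$.

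Plugging these into \Cref{thm:intro-exp} at level $k$, the product telescopes:
\[
\prod_{j=i}^{k-2}(1-\gamma_j)\ \ge\ \prod_{j=i}^{k-2}\frac{c_j-1}{c_j}
\]
for the appropriate linear $c_j$. A telescoping of this shape, e.g.\ $\prod_{j=i}^{k-2}\frac{j}{j+1}=\frac{i}{k-1}$, yields the factor $\frac{i}{k-1}$, giving main term $\frac{(1-\mathbb{E}[f])i}{(k-1)(k-i)}$. I would adjust the indexing of $\gamma_j$ so it matches this telescoping; if the spectral bounds come out as $1-\gamma_j\ge\frac{j}{j+1}$, this is exact. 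Finally, I check that $c_{k,i,\gamma}=O(1)$. The bracketed difference is at most the first product divided by $k-i$ times $O(1)$, so it is $O\!\left(\frac{1}{k-i}\right)$ and is cancelled by the prefactor $k-i+1$. The factor $(1+(k-i)\gamma^{(-)}_{i-1})^{-1}$ is bounded because $|\gamma^{(-)}_{i-1}|\le\frac{1}{2k-i}$ makes $1+(k-i)\gamma^{(-)}_{i-1}\ge\frac12$. This two-sidedness is precisely where the large ambient dimension $4k$ versus $2k$ is used.
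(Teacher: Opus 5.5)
\paragraph{Gap: the local spectral bounds are never established.} Your outer structure matches the paper's. You feed local-spectral bounds for the product into \Cref{thm:intro-exp}, telescope the main term, and check that $c_{k,i,\gamma}=O(1)$. Your bookkeeping for $c_{k,i,\gamma}$ is fine: $(k-i+1)(\cdots)=\left(\frac{1}{k-i}+\gamma_{i-1}\right)\prod_{j=i}^{k-2}(1-\gamma_j)\le 2$ needs only $\gamma_{i-1}\le 1$. The problem is that the spectral bounds, which are the entire content of this corollary, are never established.
\begin{itemize}
\item You do not fix the construction, and the method you sketch cannot produce the bounds. From $A_\tau=\alpha J+(1-\alpha)B$ with only $\|B\|\le 1$ you get $\gamma_j\le 1-\alpha$ and $\gamma^{(-)}_j\ge-(1-\alpha)-\alpha\,|\lambda_{\min}(J)|$. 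Your negative-side discussion also ignores that $B$ can have eigenvalues near $-1$, e.g.\ for bipartite $G$. So everything rests on an unproved claim that the $G$-dependent part of every $j$-link carries weight $O(1/k)$.
\item Your targets are mutually inconsistent. $\gamma_j\le\frac{1}{2k-j}$ and $1-\gamma_j\ge\frac{j}{j+1}$ are different profiles, and ``adjust the indexing'' is not an argument.
\item The claim that all $\gamma_j$ with $j\le k-2$ are $O(1/k)$ independently of $G$ is false at $j=0$. Here $\gamma_0$ is $\lambda_2$ of the whole $1$-skeleton of $X$, which equals $1$ when $G$ is disconnected. This is harmless for the result, since only $\gamma_0\le 1$ enters $c_{k,1,\gamma}$, but it shows the heuristic is unreliable.
\end{itemize}

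\paragraph{Missing idea: Trickling-Down.} Oppenheim's Trickling-Down Theorem (\Cref{thm:trickling}) means you never need to analyze low-level links directly. It suffices to have a two-sided bound at co-dimension $2$, which is exactly what the paper imports from Golowich: $\max\{\gamma_{k-2},|\gamma^{(-)}_{k-2}|\}\le\frac1k$.
\begin{itemize}
\item Trickling down gives $\gamma_j\le\frac{1}{j+2}$ for $1\le j\le k-2$ (apply it inside vertex links, which is all the main term uses).
\item It also gives $\gamma^{(-)}_j\ge-\frac{1}{2k-2-j}$; the negative side needs no connectivity assumption.
\end{itemize}
Consequently $\prod_{j=i}^{k-2}(1-\gamma_j)\ge\prod_{j=i}^{k-2}\frac{j+1}{j+2}=\frac{i+1}{k}\ge\frac{i}{k-1}$. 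Also $1+(k-i)\gamma^{(-)}_{i-1}\ge 1-\frac{k-i}{2k-1-i}\ge\frac12$, so $c_{k,i,\gamma}\le 4$. Together these give the stated bound, and this is the paper's (omitted, one-line) proof.
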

As for the case of clique-complexes, this is especially powerful in low dimensions. For concreteness, we again look at the first non-trivial setting of $k=3$.
\begin{corollary}[Low Influence Functions on Combinatorial HDX (\Cref{cor:prod2})]
Let $X$ be a $3$-dimensional `product-HDX' of any graph $G$ and $\Delta_n(6)$, and $S \subset X(3)$ be any subset of triangles with expansion at most
\[
\Phi(S) \leq \frac{1}{4} - \delta.
\]
Then there exists a vertex $v \in X(1)$ such that $S$ contains a $\Omega(\delta)$ fraction of triangles including the vertex $v$.
\end{corollary}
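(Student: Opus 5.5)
The plan is to derive this as a direct specialization of the preceding corollary (the product-HDX version of \Cref{thm:intro-exp}) at $k=3$, $i=1$. The only real work is to check that the parameters line up and to convert the globality conclusion into a statement about a single vertex.

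\textbf{Step 1: record the spectral parameters.} I will first check the local-spectral parameters of the $3$-dimensional product-HDX built from $G$ and $\Delta_n(6)$. Following Golowich's analysis (and as used in the preceding corollary), links of vertices and edges should have $\gamma_j \le O(1/k)$-type one-sided bounds and negative spectrum bounded away from $-1$. Plugging these into the general bound with $k=3$, $i=1$ should give
\[
\Phi(f) \ge \frac{(1-\mathbb{E}[f])\cdot 1}{(3-1)(3-1)} - O(\varepsilon) = \frac{1-\mathbb{E}[f]}{4} - O(\varepsilon).
\]
The constant $c_{k,i,\gamma}$ is an absolute constant here because $k$ is fixed at $3$ and $\gamma^{(-)}_{0}$ is bounded away from $-1$. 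I expect this verification to be the main obstacle. I would check it by computing the adjacency spectra of the vertex links directly: each is essentially a tensor of a link of $G$-type structure with a complete-complex link, whose smallest eigenvalue is $-1/(n-1)$-ish. If that computation is awkward, I would simply invoke the bound exactly as stated in the earlier corollary.

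\textbf{Step 2: contrapositive.} Let $f=1_S$. Suppose that for every vertex $v$ the density of $S$ among triangles containing $v$ is at most $\mathbb{E}[f]+\varepsilon$, so that $f$ is $(\varepsilon,1)$-global. Then $\Phi(S) \ge \frac{1-\mathbb{E}[f]}{4} - C\varepsilon$. Combined with $\Phi(S)\le \frac14-\delta$, this gives
\[
\frac{\mathbb{E}[f]}{4} + C\varepsilon \ge \delta .
\]

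\textbf{Step 3: split into cases.} If $\mathbb{E}[f]\ge 2\delta$, then by averaging, $\mathbb{E}[f]=\mathbb{E}_{v\sim\pi_1}\mathbb{E}_{X_v}[f_v]$, so some vertex $v$ has density at least $\mathbb{E}[f]\ge 2\delta=\Omega(\delta)$ in its link, and we are done. Otherwise $\mathbb{E}[f]<2\delta$, so $\mathbb{E}[f]/4<\delta/2$. Then $C\varepsilon \ge \delta/2$ for every $\varepsilon$ witnessing globality. Taking $\varepsilon$ to be the maximal link excess $\max_v \mathbb{E}_{X_v}[f_v]-\mathbb{E}[f]$, some vertex $v$ must satisfy $\mathbb{E}_{X_v}[f_v]\ge \varepsilon \ge \delta/(2C)=\Omega(\delta)$. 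In either case some vertex $v$ has an $\Omega(\delta)$ fraction of its triangles in $S$, as claimed.
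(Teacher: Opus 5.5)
Your proposal is correct and matches the paper's (implicit) argument: specializing \Cref{cor:prod} to $k=3$, $i=1$ gives $\Phi(f)\ge \frac{1-\mathbb{E}[f]}{4}-4\varepsilon$ for any $(\varepsilon,1)$-global $f$, and your contrapositive with an averaging case split on $\mathbb{E}[f]$ is how the paper proves the analogous \Cref{cor:clique}. One small correction to your Step~1 aside: the relevant factor in $c_{3,1,\gamma}$ is $(1+2\gamma_0^{(-)})^{-1}$, so you need $\gamma_0^{(-)}$ bounded away from $-\tfrac12$, not just from $-1$; this does not matter once you cite the corollary's explicit $4\varepsilon$ term.
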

Finally, we note that as an immediate corollary of \Cref{thm:intro-exp} we also get a `Kruskal-Katona' type theorem in all the above settings. Kruskal-Katona is a seminal and broadly used result in extremal combinatorics which given a set $S \subset X(k)$, lower bounds the number of $(k-1)$-faces that sit inside $S$, called its `lower shadow' and denoted $\partial S$.
\begin{corollary}[Kruskal-Katona for Weak HDX (\Cref{cor:kruskal})]\label{cor:intro-kruskal}
    Let $(X,\Pi)$ be a $k$-dimensional simplicial complex and $S \subset X(k)$ any $(\varepsilon,i)$-global set. Then
\[
\mathbb{E}[\partial S] \geq \mathbb{E}[S]\left(1+\frac{1-\mathbb{E}[S]}{k-i}\prod\limits_{j=i}^{k-2}(1-\gamma_j) - c_{k,i,\gamma}\varepsilon.\right)
\]
where $c_{k,i,\gamma} \leq (k-i+1)\left(\frac{1}{k-i}\prod\limits_{j=i}^{k-2}(1-\gamma_j)-\frac{1}{k-i+1}\prod\limits_{j=i-1}^{k-2}(1-\gamma_j) \right)\left(1+(k-i)\gamma^{(-)}_{i-1}\right)^{-1}$
\end{corollary}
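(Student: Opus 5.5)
The plan is to derive the corollary directly from \Cref{thm:intro-exp}, applied to $f=1_S$, by relating the lower shadow of $S$ to the expansion of $S$ under the down-up walk $N_k^1$.

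\textbf{Step 1: express the non-expansion of $S$ through the down operator.} Let $D$ be the averaging operator from functions on $X(k)$ to functions on $X(k-1)$, given by $Df(\sigma)=\mathbb{E}_{\tau\supset\sigma}[f(\tau)]$. Here $\tau\in X(k)$ is drawn from the conditional distribution induced by $\pi_k$, and $U$ is its adjoint. Then $N_k^1=UD$, so
\[
\langle N_k^1 1_S,1_S\rangle=\|D1_S\|^2=\mathbb{E}_{\sigma\sim\pi_{k-1}}\big[(D1_S(\sigma))^2\big],
\]
and therefore
\[
1-\Phi(S)=\frac{\mathbb{E}_{\sigma}[(D1_S(\sigma))^2]}{\mathbb{E}[S]}.
\]

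\textbf{Step 2: compare with the shadow.} The function $g=D1_S$ takes values in $[0,1]$ and is supported on $\partial S$. Also $\mathbb{E}_\sigma[g]=\mathbb{E}[S]$, since $\pi_{k-1}$ is the marginal of $\pi_k$. By Cauchy--Schwarz applied to $g\cdot 1_{\partial S}$,
\[
\mathbb{E}[S]^2=\mathbb{E}[g\,1_{\partial S}]^2\le \mathbb{E}[g^2]\,\mathbb{E}[\partial S].
\]
Rearranging gives
\[
\mathbb{E}[\partial S]\ge \frac{\mathbb{E}[S]^2}{\mathbb{E}[g^2]}=\frac{\mathbb{E}[S]}{1-\Phi(S)}\ge \mathbb{E}[S]\,(1+\Phi(S)),
\]
where the last step uses $\frac{1}{1-x}\ge 1+x$ for $x\in[0,1)$. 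The boundary case $\Phi(S)=1$ is handled directly: it forces $g^2=0$, which contradicts $\mathbb{E}[g]>0$ when $S$ is nonempty, so this case cannot occur.

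\textbf{Step 3: plug in the expansion bound.} The set $S$ is $(\varepsilon,i)$-global by assumption, so \Cref{thm:intro-exp} gives
\[
\Phi(S)\ge \frac{1-\mathbb{E}[S]}{k-i}\prod_{j=i}^{k-2}(1-\gamma_j)-c_{k,i,\gamma}\varepsilon.
\]
If this right-hand side is negative, the corollary still holds, because the trivial bound $\mathbb{E}[\partial S]\ge\mathbb{E}[S]$ (from $g\le 1_{\partial S}$) already exceeds the claimed expression. Otherwise, substituting into Step 2 yields exactly the stated inequality with the same constant $c_{k,i,\gamma}$.

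\textbf{Main obstacle.} The only delicate point is getting the normalizations right. Specifically, I need to check that the measure on $(k-1)$-faces used to define $\mathbb{E}[\partial S]$ is $\pi_{k-1}$, and that $N_k^1=UD$ under the paper's weighted conventions. Both hold by the definition of $\Pi$ via downward closure.
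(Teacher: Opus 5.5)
Your proposal is correct and follows the paper's route. The paper obtains the corollary by combining \Cref{thm:expand} with the observation of GLL \cite{gur2021hypercontractivity} that a lower-walk expansion bound yields a shadow bound. Your Steps 1--2 prove that reduction explicitly via Cauchy--Schwarz, even in the slightly stronger intermediate form $\mathbb{E}[\partial S]\ge \mathbb{E}[S]/(1-\Phi(S))$. The case split in Step 3 is harmless but unnecessary: since $\mathbb{E}[S]\ge 0$, you can substitute the lower bound on $\Phi(S)$ into $\mathbb{E}[S](1+\Phi(S))$ whatever its sign.
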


\subsubsection{Small-Set Expansion}
Similar to the setting of the lower walk, links also provide a canonical family of non-expanding sets for the noise operator on simplicial complexes. In this setting, it is typical \cite{keevash2019hypercontractivity,gur2021hypercontractivity,bafna2021hypercontractivity} to use a slightly stronger notion of global functions we'll call `$(\varepsilon,i)$-strongly global', which simply assumes $f$ is sparse on every link:
\[
\forall \tau \in X(i): \underset{X_\tau}{\mathbb{E}}[f_\tau] \leq \varepsilon.
\]
BHKL \cite{bafna2021hypercontractivity}, GLL \cite{gur2021hypercontractivity}, and Hopkins \cite{hopkins2025hypercontractivity} prove small-set expansion for strongly global functions on two-sided and one-sided partite HDX with very strong quantitative expansion.\footnote{In particular, they require local-spectral expansion to be inverse exponential in the dimension.} While these properties are satisfied by some constructions (e.g.\ \cite{kaufman2020high}), some variants of the seminal \textit{Ramanujan complexes} (quotients of the affine building) of Lubotzky, Samuels, and Vishne \cite{lubotzky2005explicit} actually fail these conditions.

On the other hand, these complexes exhibit nice \textit{local} structure. In particular, for large enough fields, their links are quantitatively strong partite HDX, and therefore satisfy a small-set expansion theorem. This raises a natural question: is any complex $X$ whose links satisfy global small-set expansion itself a global small-set expander? We answer this question in the affirmative.
\begin{theorem}[Local-to-Global SSE (Informal \Cref{thm:local-to-global})]\label{thm:intro-local-to-global}
    Let $X$ be a complex such that $\forall v \in X(1)$:
    \begin{enumerate}
        \item Any $(\varepsilon,i-1)$-strongly global function $f$ on $X_v$ has expansion
        \[
        \Phi_{T_\rho}(f) \geq \phi(\varepsilon,k,i)
        \]
        \item $X_v$ is a $\gamma$-two-sided or $\gamma$-one-sided partite HDX
    \end{enumerate}
    Then any $(\varepsilon,i)$-global boolean function $f$ has expansion at least
    \[
    \Phi_{T_\rho}(f) \geq \phi(\varepsilon,k,i) - 2^{O(k)}\gamma - 2^{-\Omega(k)}.
    \]
\end{theorem}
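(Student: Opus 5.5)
The plan is to write the global noise operator $T_\rho$ on $X(k)$ as an average over vertices of a localized noise operator acting inside vertex links, and then transfer expansion from the links to the whole complex through a function-dependent Garland-type decomposition.

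\textbf{Step 1: localized noise.} Fix a $k$-face $\sigma$. Sampling $\tau\sim T_\rho\sigma$ keeps each vertex of $\sigma$ independently with probability $\rho$. Except with probability $(1-\rho)^k\le 2^{-\Omega(k)}$ (for constant $\rho$; otherwise we absorb the constant), at least one vertex is kept. So, up to this error, the walk can be sampled as follows:
\begin{enumerate}
\item pick a vertex $v\in\sigma$ among the kept ones, with appropriate weights;
\item apply the noise operator $T^{(v)}_{\rho'}$ of the link $X_v$ to $\sigma\setminus v$.
\end{enumerate}
The parameter $\rho'$ is tuned so that, conditioned on $v$ being kept, the remaining $k-1$ vertices are kept i.i.d.\ with probability $\rho$. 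Conditioning on the number of kept vertices, and using that $N_k^{k-i}$ restricted to the star of $v$ coincides with $N^{k-i}_{k-1}$ on $X_v$, gives an identity
\[
\langle f,T_\rho f\rangle \approx \mathbb{E}_{v\sim\pi_1}\big[\langle f_v,T^{(v)}_{\rho}f_v\rangle_{X_v}\big].
\]
The approximation is exact once we replace the uniform choice of $v$ by the size-biased one; the discrepancy is exactly the $2^{-\Omega(k)}$ term. The norms decompose the same way: $\langle f,f\rangle=\mathbb{E}_v\langle f_v,f_v\rangle$.

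\textbf{Step 2: Garland averaging with function-dependent weights.} We have
\[
1-\Phi_{T_\rho}(f)=\frac{\mathbb{E}_v\langle f_v,T^{(v)}f_v\rangle}{\mathbb{E}_v\langle f_v,f_v\rangle}.
\]
This is an average of the local quantities $1-\Phi^{(v)}(f_v)$ weighted by the distribution $w(v)\propto \pi_1(v)\,\mathbb{E}[f_v]$, which depends on $f$. It therefore suffices to show that most of the $w$-mass sits on links where $f_v$ is $(\varepsilon',i-1)$-strongly global, and then apply hypothesis 1 there. Globality passes down: an $(i-1)$-face $\tau$ of $X_v$ corresponds to the $i$-face $\tau\cup v$ of $X$, so $\mathbb{E}[(f_v)_\tau]=\mathbb{E}[f_{\tau\cup v}]$, which is at most $\varepsilon$ (or $\mathbb{E}[f]+\varepsilon$). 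The problem is that the \emph{strong} globality needed in the link is relative to the link's own density $\mathbb{E}[f_v]$, and $f_v$ could be dense, i.e.\ $\mathbb{E}[f_v]\gg \varepsilon$. Links with $\mathbb{E}[f_v]\le\varepsilon$ contribute only $O(\varepsilon)$ to $w$, so they are harmless.

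\textbf{Step 3: controlling dense links (the main obstacle).} The real difficulty is bounding the $w$-mass of vertices with large $\mathbb{E}[f_v]$. For these I would use hypothesis 2. A $\gamma$-HDX link satisfies a trickling-down / Fourier bound: the level-$1$ weight of $f_v$ is at most $\mathbb{E}[f_v]^2+O(k\gamma)$ plus a contribution from $(i-1)$-link correlations. This lets us show that a dense $f_v$ with all $(i-1)$-links sparse still expands under $T^{(v)}$ up to a $2^{O(k)}\gamma$ loss.

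If that route proves awkward, the alternative is to reapply the Step 1 decomposition inside $X_v$, inducting on $i$. Each level costs $2^{O(k)}\gamma$ from the spectral error of the localized walk, since $N_k^{j}$ deviates from its "product" version by $O(\binom{k}{j}\gamma)$ in operator norm. Summing these losses gives the $2^{O(k)}\gamma$ term, and combining with Step 2 yields $\Phi_{T_\rho}(f)\ge\phi(\varepsilon,k,i)-2^{O(k)}\gamma-2^{-\Omega(k)}$.
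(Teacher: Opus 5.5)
\paragraph{The gap is in Step 1.} This is exactly the obstacle the paper's proof is built to overcome. Write $T^k_\rho=\sum_m \mathcal{B}^\rho_k(m)N^{k-m}_k$, where $m$ is the number of kept vertices. For $m\ge 1$, \Cref{lemma:local-to-global-expansion} localizes $N^{k-m}_k$ to $N^{k-m}_{k-1}|_v$. The link operator $T^{k-1}_\rho|_v$ weights that term by $\mathcal{B}^\rho_{k-1}(m-1)=\frac{m}{k\rho}\mathcal{B}^\rho_k(m)$.

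So your ``size-biased'' walk is not $T_\rho$ minus the event that nothing is kept. Size-biasing reweights \emph{every} level $m$, and the error is governed by the total variation distance between $\mathcal{B}^\rho_k$ and its size-biased version. That distance is polynomial in $1/k$, not $2^{-\Omega(k)}$. Already for a dictator on the cube (a partite complex with $\gamma=0$) one has $\bar{\Phi}_{T_\rho}(f)=\frac{1+\rho}{2}$, while $\mathbb{E}_{v\sim\pi_1^f}[\bar{\Phi}_{T^{k-1}_\rho|_v}(f|_v)]=\frac{1+\rho}{2}+\frac{1-\rho}{2k}$.

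The paper states explicitly that this naive shift costs $\poly(k^{-1})$ error, and hypothesis 2 is spent exactly here. In the proof of \Cref{cor:noisy-garland2}, it truncates the terms with fewer than $j$ kept vertices (cost $F_{\mathcal{B}^\rho_k}(j-1)$) and localizes the rest. It then expands each localized lower walk in the approximate Fourier basis of the HDX link (\Cref{thm:Fourier-Analysis}). \Cref{claim:binomial} shows the resulting coefficient on level $\ell$ is at most $\rho^\ell$. Together these give the \emph{one-sided} bound $\bar{\Phi}_{T^k_\rho}(f)\le\mathbb{E}_{\tau\sim\pi^f_j}[\bar{\Phi}_{T^{k-j}_\rho|_\tau}(f|_\tau)]+F_{\mathcal{B}^\rho_k}(j-1)+c_k\gamma$, which is all the theorem needs.

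\paragraph{The ``main obstacle'' in Steps 2--3 does not exist.} Strong globality is an absolute bound, not one relative to the link's density. If $f$ is $(\varepsilon,i)$-strongly global, then every $(i-1)$-face $\tau$ of $X_v$ satisfies $\mathbb{E}[(f|_v)|_\tau]=\mathbb{E}[f|_{\tau\cup v}]\le\varepsilon$. So hypothesis 1 applies to $f|_v$ on every link, and for $i\ge 1$ averaging even forces $\mathbb{E}[f|_v]\le\varepsilon$, so there are no dense links. The paper uses only this one-line observation before averaging over $\pi^f_j$. Your proposal therefore spends the HDX hypothesis on a non-issue and rests the real step on a false approximate identity.

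\paragraph{How to repair your idea.} Your size-biasing idea can be fixed without any expansion assumption. The map $m\mapsto g(m):=\bar{\Phi}_{N^{k-m}_k}(f)=\|D^k_m f\|^2/\|f\|^2$ is non-decreasing, because $D^k_{m-1}=D_mD^k_m$ and $D_m$ is a contraction. Chebyshev's sum inequality then gives $\sum_m\mathcal{B}^\rho_k(m)g(m)\le\frac{1}{k\rho}\sum_m m\,\mathcal{B}^\rho_k(m)g(m)=\sum_{m\ge1}\mathcal{B}^\rho_{k-1}(m-1)g(m)$. Garland's lemma turns the right-hand side into $\mathbb{E}_{v\sim\pi^f_1}[\bar{\Phi}_{T^{k-1}_\rho|_v}(f|_v)]$. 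Such a one-sided comparison, via this monotonicity or via the paper's Fourier argument, is the missing idea.
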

We note the requirement that links of $X$ are HDX can be removed up to worsened dependence in the dimensional error term (see \Cref{thm:local-to-global}). The inverse exponential error above is essentially negligible, since any function with respect to the noise operator has non-expansion at least inverse exponential in $k$. 

As an immediate corollary, we prove the Ramanujan complexes are global small-set expanders.
\begin{corollary}[Ramanujan Complexes are Global SSEs (Informal \Cref{cor:LSV-expand})]
    Let $f \in C_k$ be an $(\varepsilon,i)$-strongly global boolean function on a Ramanujan complex of \cite{lubotzky2005explicit} with $2^{-\Omega(k)}$ one-sided local-spectral expansion. The expansion of $f$ is at least
    \[
    \Phi_{T_\rho}(f) \geq 1 - \rho^{i} - O_i(\varepsilon) - 2^{-\Omega(k)}.
    \]
\end{corollary}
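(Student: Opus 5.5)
This corollary should follow by applying \Cref{thm:intro-local-to-global} to the LSV complexes. So the plan is to check its two hypotheses for every vertex link and then read off the bound. The argument has three steps, carried out in order.

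\textbf{Step 1: structure of the links.} For a Ramanujan complex $X$ of \cite{lubotzky2005explicit} of dimension $d$ over $\mathbb{F}_q$, the link of any vertex $v$ is the spherical building of $\mathrm{SL}_{d}(\mathbb{F}_q)$: the flag complex of proper nonzero subspaces of $\mathbb{F}_q^{d}$. This is a $(d-1)$-partite complex. Its own links are joins of smaller spherical buildings. By Garland/Oppenheim-type computations, every $1$-dimensional link in it is a bipartite incidence graph of points versus hyperplanes (or a complete bipartite graph), with second eigenvalue $O(1/\sqrt q)$. Hence $X_v$ is a $\gamma$-one-sided partite HDX with $\gamma = O(1/\sqrt q)$. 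Taking $q$ large enough, $\gamma \le 2^{-Ck}$, which is the standing hypothesis ``$2^{-\Omega(k)}$ one-sided local-spectral expansion''. This verifies hypothesis (2).

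\textbf{Step 2: SSE on each link.} Hopkins \cite{hopkins2025hypercontractivity} (and BHKL/GLL in the partite setting) proves global hypercontractivity and small-set expansion for one-sided partite HDX with inverse-exponential expansion. Applied to $X_v$ at level $k-1$ with locality parameter $i-1$, it gives: any $(\varepsilon,i-1)$-strongly global $g$ on $X_v(k-1)$ satisfies
\[
\Phi_{T_\rho}(g) \ge 1-\rho^{i} - O_i(\varepsilon) - 2^{O(k)}\gamma .
\]
Here the $\rho^{i}$ comes from the level-$i$ part of the noise spectrum on a strongly global function. One bookkeeping point needs care: the noise operator on $X$ restricted to a link, versus the noise operator native to the link, which differ in whether the vertex $v$ is kept. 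I would set $\phi(\varepsilon,k,i) = 1-\rho^i - O_i(\varepsilon) - 2^{O(k)}\gamma$ and check that it matches the normalization \Cref{thm:intro-local-to-global} expects. This verifies hypothesis (1).

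\textbf{Step 3: conclude.} Strong globality on $X$ descends to links. If $f$ is $(\varepsilon,i)$-strongly global on $X$, then each $f_v$ is $(\varepsilon,i-1)$-strongly global on $X_v$, because $(f_v)_\sigma = f_{v\cup\sigma}$. Also $(\varepsilon,i)$-strongly global implies $(\varepsilon,i)$-global. So \Cref{thm:intro-local-to-global} gives
\[
\Phi_{T_\rho}(f)\ge 1-\rho^i - O_i(\varepsilon) - 2^{O(k)}\gamma - 2^{-\Omega(k)} .
\]
The $2^{O(k)}\gamma$ term is absorbed into $2^{-\Omega(k)}$ by the choice of $q$.

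\textbf{Main obstacle.} I expect Step 2 to be the hardest. The cited SSE results are stated for the native partite noise operator and the native globality notion on the spherical building. Transferring them requires showing that these agree with the restriction of the $X$-level operator $T_\rho$ and of strong globality, up to $2^{-\Omega(k)}$ error. My first attempt would be to expand both operators as convex combinations of the $N^j$ walks and compare the coefficients via the binomial weights in the definition of $T_\rho$.
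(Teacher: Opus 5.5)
Your proposal follows the paper's own proof. \Cref{thm:ram-sse} applied to the partite vertex links makes the LSV complexes $1$-locally SSE, and the HDX case of \Cref{thm:local-to-global} with $j=1$ finishes, with $F_{\mathcal{B}_k^\rho}(0)$ and the $2^{O(k)}\gamma$ terms absorbed into $2^{-\Omega(k)}$. The operator mismatch you flag as the main obstacle needs no separate work: the locally-SSE hypothesis is stated for each link's native $T_\rho$, and the reconciliation already happens inside that theorem (\Cref{cor:noisy-garland2}, via the approximate Fourier basis on the links and \Cref{claim:binomial}); comparing binomial weights alone would only give the $\poly(1/k)$ error of \Cref{cor:noisy-garland1}. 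One caveat: the link-level bound in \Cref{thm:ram-sse} has a $c_i\varepsilon^{1/2}$ term rather than $O_i(\varepsilon)$, so your Step 2 overstates it. This route, like the paper's, actually yields the formal \Cref{cor:LSV-expand} with $\varepsilon^{1/2}$ dependence.
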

\subsection{Techniques}
We sketch the proofs of \Cref{thm:intro-exp} and \Cref{thm:intro-local-to-global}, starting with the former.

\paragraph{Proof Overview of \Cref{thm:intro-exp}} Our general approach to prove global sets expand is broken into three core components. The first is a new function-dependent variant of `Garland's Lemma', a classical method of breaking global functions into local components over the complex. In particular, we show the expansion of a function $f$ with respect to the lower walks can be written as an expectation of its expansion over links of $X$.
\begin{lemma}[Garland's Lemma for Expansion (Informal \Cref{lemma:local-to-global-expansion}]\label{lem:intro-Garland}
    Let $(X,\Pi)$ be a weighted, pure simplicial complex and $f \in C_k$ any function on $k$-faces. Then for all $i \leq k$ and $j \leq k-i$:
\begin{equation}
\Phi(f) = \underset{\tau \sim \pi^f_{j}}{\mathbb{E}}[\Phi_{X_\tau}(f|_\tau)],
\end{equation}
where $f|_\tau(\sigma)=f(\tau \cup \sigma)$ is the localization of $f$ to the link of $\tau$, and $\pi^f_{j}$ is some $f$-dependent distribution over $j$-faces of $X$.
\end{lemma}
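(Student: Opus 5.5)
The plan is to write both the numerator and the denominator of $\Phi(f)$ as averages over $j$-faces, and then read the ratio as an average of local ratios under a reweighted distribution. Write $\Phi(f) = 1 - \frac{\langle N_k^1 f, f\rangle}{\langle f,f\rangle}$ (or $\frac{\langle f, Lf\rangle}{\langle f,f\rangle}$ with $L = I - N_k^1$).

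\emph{Step 1 (denominator).} Localize the denominator by ordinary Garland-type averaging. Sampling $\sigma \sim \pi_k$ and then a uniformly random $j$-subset $\tau \subset \sigma$ is the same as sampling $\tau \sim \pi_j$ and then $\sigma \setminus \tau$ from the link distribution of $X_\tau$. Hence
\[
\langle f,f\rangle = \underset{\tau\sim\pi_j}{\mathbb{E}}\big[\langle f|_\tau, f|_\tau\rangle_{X_\tau}\big].
\]

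\emph{Step 2 (numerator).} Localize $\langle f, N_k^1 f\rangle$, which is the key computation. One step of the lower walk from $\sigma$ removes a vertex $v$ and resamples to $\sigma' \supset \sigma\setminus\{v\}$, so the pair $(\sigma,\sigma')$ shares a $(k-1)$-face $\rho$. Pick $\tau$ as a uniformly random $j$-subset of $\rho$, which requires $j \le k-1$; the edge case where $\tau$ is all of $\sigma$ is exactly what the range $j\le k-i$ is meant to exclude. Conditioned on $\tau$, the triple $(\rho\setminus\tau, \sigma\setminus\tau, \sigma'\setminus\tau)$ is distributed exactly as a step of the lower walk on level $k-j$ of $X_\tau$. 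This uses that the link measures are the conditionals of $\Pi$.

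This gives $\langle f, N_k^1 f\rangle = \mathbb{E}_{\tau}\langle f|_\tau, N^1_{k-j,X_\tau} f|_\tau\rangle$, but with a subtlety. Choosing $\tau \subset \rho$ is not the same as choosing $\tau \subset \sigma$ uniformly: $\tau$ avoids the removed vertex. I would check that averaging over a uniform $\tau\subset\sigma$ and then a uniform removal from $\sigma\setminus\tau$ produces the same joint law as a uniform removal $v$ followed by a uniform $\tau\subset\sigma\setminus\{v\}$. Both are a uniformly random ordered choice of disjoint $(v,\tau)$, so they agree. Combining with Step 1 gives
\[
\langle f,Lf\rangle = \underset{\tau\sim\pi_j}{\mathbb{E}}\big[\langle f|_\tau, L_\tau f|_\tau\rangle\big],
\]
where $L_\tau$ is the down-up Laplacian on $X_\tau$.

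\emph{Step 3 (reweighting).} Define $\pi^f_j(\tau) \propto \pi_j(\tau)\,\|f|_\tau\|^2_{X_\tau}$. This is well defined when $f\ne 0$, and faces with $f|_\tau = 0$ get zero weight. Then
\[
\Phi(f) = \frac{\mathbb{E}_{\pi_j}\big[\|f|_\tau\|^2\,\Phi_{X_\tau}(f|_\tau)\big]}{\mathbb{E}_{\pi_j}\|f|_\tau\|^2} = \underset{\tau\sim\pi^f_j}{\mathbb{E}}\big[\Phi_{X_\tau}(f|_\tau)\big].
\]
For boolean $f$, $\pi^f_j(\tau)$ is proportional to $\pi_j(\tau)$ times the density of $f$ in the link of $\tau$. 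This is the form needed later to combine with globalness.

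The main obstacle is Step 2: verifying that the localized lower walk is exactly the link's lower walk, with no normalization or time-scaling factor. If a constant rescaling appears (for example, the fraction of removed vertices landing outside $\tau$), I would absorb it by using the correctly normalized lower walk on $X_\tau$. If needed, I would restate the identity with $\Phi$ measured relative to the link's own walk, which is what the informal statement allows.
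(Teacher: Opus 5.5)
Your proposal is correct and follows essentially the paper's route: localize $\langle f,f\rangle$ and $\langle f,N_k^1 f\rangle$ over $\tau\sim\pi_j$ by Garland's method, then reweight by $\pi_j^f(\tau)=\pi_j(\tau)\langle f|_\tau,f|_\tau\rangle/\langle f,f\rangle$. The one difference is that you prove the numerator localization by coupling a walk step (choosing $\tau$ inside the shared $(k-1)$-face), whereas the paper uses $\langle f,N_k^i f\rangle=\|D^k_{k-i}f\|^2$ together with the fact that $D$ commutes with localization; your coupling already shows the identity is exact, so the rescaling factor you worried about never arises, and choosing $\tau$ inside the shared $(k-i)$-face gives the general $N_k^i$ case with $j\le k-i$.
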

We remark that to the authors' knowledge, this is the first variant of Garland's lemma where the expectation is over a \textit{function-dependent} distribution, rather than simply over the complex distribution $\pi_j$, which is necessary for our setting.

The second core component is a variant of the GK's decomposition theorem \cite{gotlib2022fine} for the lower walk.
\begin{theorem}[GK-Decomposition (Informal \Cref{thm:decomp-app})]
Let $(X,\Pi)$ be a $k$-dimensional simplicial complex. Then for any $f \in C_{k}$ there is an orthogonal decomposition $f=\sum\limits_{i=0}^k f_i$ such that:
\[
\langle N^k_1 f,f \rangle \leq \mathbb{E}[f]^2 + \sum\limits_{i=1}^{k-1}\left(1-\frac{1}{k-i+1}\prod\limits_{j=i-1}^{k-2}(1-\gamma_j) \right) \langle f,f_i \rangle.
\]
\end{theorem}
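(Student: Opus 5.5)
We construct the decomposition explicitly and then prove the spectral bound by induction on the dimension, localizing to links. For the decomposition we use the Gotlib--Kaufman filtration by lifted lower-order functions. Set $V_i \coloneqq \{U_{i}^{k} g : g \in C_i\}$, where $U_i^k$ is the up operator $C_i \to C_k$ (averaging over the $i$-faces of a $k$-face). Then $V_0 \subseteq V_1 \subseteq \cdots \subseteq V_k = C_k$, and $V_0$ is the constants. Define $f_i$ as the projection of $f$ onto $V_i \cap V_{i-1}^{\perp}$. This gives an orthogonal decomposition with $f_0=\mathbb{E}[f]$, so $\langle f,f_i\rangle=\|f_i\|^2$ and $\langle f_0,f_0\rangle=\mathbb{E}[f]^2$. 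The top piece $f_k$ is orthogonal to $V_{k-1}$, so $D_k^{k-1} f_k=0$ (here $D$ is the adjoint down operator). Since $N_k^1=U_{k-1}^kD_k^{k-1}$, the pieces $f_k$ contribute nothing. This accounts for the sum in the statement stopping at $k-1$.

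For the bound, write $\langle N_k^1 f,f\rangle=\|D_k^{k-1}f\|^2$ and expand $D f=\sum_i Df_i$. The key claim I would aim for is an approximate orthogonality and contraction statement for level $i$:
\[
\|D f_i\|^2 \le \Big(1-\tfrac{1}{k-i+1}\prod_{j=i-1}^{k-2}(1-\gamma_j)\Big)\|f_i\|^2,
\]
together with the assertion that the cross terms $\langle Df_i,Df_{i'}\rangle$ vanish or can be absorbed. Vanishing of the cross terms should follow because $D$ maps $V_i^{(k)}$ into $V_i^{(k-1)}$, and $f_i \perp V_{i-1}$ at level $k$ lifts to the analogous orthogonality one level down. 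I would check this via $\langle Df_i, Df_{i'}\rangle=\langle f_i, UDf_{i'}\rangle$ with $UDf_{i'}\in V_{i'}$, which handles the case $i>i'$.

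The contraction claim is the main obstacle, and I would prove it by induction on $k-i$ using Garland-type localization. Take $g\in V_i\cap V_{i-1}^\perp$. Localize to vertex links, where $\langle N_k^1 g,g\rangle$ is an average over $v\sim\pi_1$ of the quadratic forms of the corresponding lower walk on $X_v$ restricted to $g|_v$, plus a correction term measuring the non-lazy part. On the link $X_v$, which is a $(k-1)$-dimensional complex with parameters $\gamma_{j+1}$ in place of $\gamma_j$, the function $g|_v$ lies in level $i-1$ of the link's filtration up to a component in level $i$. By induction, the link contributes the factor $1-\frac{1}{k-i+1}\prod_{j\ge i}(\cdots)$ shifted appropriately. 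The remaining factor $(1-\gamma_{i-1})$ comes from the base step, where $\lambda_2$ of the $(i-1)$-links controls the mass of $g$ that lives on low levels of the link. That base step is the standard Garland spectral gap for the bottom-level $i=1$ walk: the walk on $V_1\cap V_0^\perp$ contracts by $1-\frac1k(1-\gamma_0)\cdots$.

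The delicate bookkeeping is the $\frac{1}{k-i+1}$ factor, which reflects the probability that the removed vertex is among the $k-i+1$ "free" coordinates, combined multiplicatively with the link gaps. If exact orthogonality across links fails, I would fall back to proving the inequality directly in the form $\langle N_k^1 f,f\rangle\le \sum_i \lambda_i \langle f,f_i\rangle$ without diagonalizing. To do this I would write $f=\sum_{\ell\le i} U g_\ell$ with each $g_\ell$ orthogonal to lower lifts, and bound $\langle N g,g\rangle$ level by level, citing the GK trickle-down estimates for these lifts.
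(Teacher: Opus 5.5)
\textbf{There is a genuine gap.} Your reduction depends on the cross terms $\langle D_kf_i,D_kf_{i'}\rangle=\langle f_i,U_{k-1}D_kf_{i'}\rangle$ vanishing. You justify this by saying $D_k$ maps $\mathrm{Im}(U^k_i)$ into $\mathrm{Im}(U^{k-1}_i)$, so that $U_{k-1}D_kf_{i'}\in V_{i'}$. That is false on a general complex.

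Already for $k=3$, $i=1$ one gets $D_3U^3_1g(\{a,b\})=\frac13\big(g(a)+g(b)\big)+\frac13\mathbb{E}_{c\sim X_{\{a,b\}}}[g(c)]$. The last term is generically not of the form $h(a)+h(b)$. It is of that form for the complete complex or products, which is why the intuition misleads. Correspondingly, the KO decomposition is not an eigenbasis of $N^1_k$; the paper stresses it is not even known to be an approximate one on weak HDX. Terms such as $\langle N^1_kf_1,f_2\rangle$ are nonzero in general.

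Once cross terms survive, your per-level contractions $\|D_kf_i\|^2\le\lambda_{k,i}\|f_i\|^2$ (writing $\lambda_{k,i}$ for the level-$i$ coefficient) cannot give the theorem. Bounds on the diagonal blocks of a quadratic form say nothing about its off-diagonal blocks. The same obstruction blocks your per-level induction. For $g\in V_i\cap V_{i-1}^\perp$, the restriction $g|_v$ lies in $\mathrm{Im}((U|_v)^{k-1}_i)\cap\mathrm{Ker}((D|_v)^{k-1}_{i-2})$. So it spreads over levels $i-1$ and $i$ of the link, including a constant piece when $i=1$, and a single-level hypothesis in the link reintroduces exactly the uncontrolled cross terms. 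Your fallback ("bound level by level, citing GK's estimates for these lifts") is essentially the statement to be proved.

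\textbf{The missing idea}, which is what the paper does following Gotlib--Kaufman, is never to diagonalize. Prove by induction on $k$ a stronger statement for arbitrary, not necessarily orthogonal, sums $\sum_{i\ge1}f_i$ satisfying only $f_i\in\mathrm{Ker}(D^k_{i-1})$. The image condition is dropped because restriction does not preserve it. The claim is that the quadratic form is at most $\sum_i\lambda_{k,i}\|f_i\|^2$ plus cross terms that are multiples of $\langle f_i,f_j\rangle$; these vanish for the orthogonal KO pieces.

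The base case is $k=2$, where $\langle N^1_2f,f\rangle=\|D_2f_1\|^2$. In the inductive step the whole $f$ is localized at once via Garland's identity for $N^1_k$. Restriction lowers every level by one, since $(f_{i+1})|_v\in\mathrm{Ker}((D|_v)^{k-1}_{i-1})$. The only piece that does not fit is the constant part $D^k_1f_1(v)$ of $(f_1)|_v$. That piece is fixed by the link walk and orthogonal to all the other localized pieces, so it splits off exactly. Its total mass is $\|D^k_1f_1\|^2=\langle N^{k-1}_kf_1,f_1\rangle\le\big(1-\frac{k-1}{k}(1-\gamma_0)\big)\|f_1\|^2$ by Lemma~\ref{lemma:advantage}.

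The product form of the coefficients then comes from iterating the identity $\lambda_{k,2}+(1-\lambda_{k,2})\big(1-\frac{k-1}{k}(1-\gamma_0)\big)=\lambda_{k,1}$, not from a count of free coordinates. Note also that level $1$ is the deepest step of this recursion, not its base; the base is the graph case, which carries the $\gamma_{k-2}$ factor.
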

Finally, the third main component is a `level-$1$ inequality' showing any $(\varepsilon,1)$-global function has low projection onto the first level of the GK-decomposition.
\begin{proposition}[Level-$1$ Inequality (Informal \Cref{cor:level-1})]\label{intro:level-i}
Let $(X,\Pi)$ be a $k$-dimensional complex and $f$ any $(\varepsilon,1)$-global boolean function $f \in C_k$. Then
\[
\langle f, f_1 \rangle \leq \frac{k}{1+\gamma_0^{(-)}(k-1)}\varepsilon.
\]
\end{proposition}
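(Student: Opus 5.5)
The plan is to identify the level-$1$ component $f_1$ of the GK-decomposition with the orthogonal projection of $f$ onto the space of ``lifted'' mean-zero vertex functions. Then $\langle f,f_1\rangle=\|f_1\|^2$ becomes a variational quantity. We control it by a spectral bound on the vertex-level down-up operator $DU$, where $U$ is the up-operator and $D$ the down-operator between $X(1)$ and $X(k)$. The only place globality enters is a bound on the variance of the vertex averages $Df$.

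\textbf{Step 1 (set-up).} Let $U:C_1\to C_k$ be the averaging up-operator, $(Ug)(\sigma)=\frac1k\sum_{v\in\sigma}g(v)$. Let $D=U^*$ be its adjoint with respect to $\pi_1,\pi_k$, so that $(Df)(v)=\mathbb{E}_{X_v}[f_v]$. Assume, as in the GK-decomposition (\Cref{thm:decomp-app}), that $f_0=\mathbb{E}[f]$ and that $f_0+f_1$ is the projection of $f$ onto $\mathrm{Im}(U)$. Then $f_1$ is the projection onto $V_1=\{Ug:\ \mathbb{E}[g]=0\}$, and $f_1=Ug^*$ where $g^*$ solves the normal equations $DUg^*=Df-\mathbb{E}[f]$. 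Hence
\[
\langle f,f_1\rangle=\|f_1\|^2=\big\langle Df-\mathbb{E}[f],\,(DU)^{-1}(Df-\mathbb{E}[f])\big\rangle ,
\]
with the inverse taken on mean-zero functions, or as a pseudo-inverse if $DU$ is singular there.

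\textbf{Step 2 (spectral bound on $DU$).} By direct computation, $DU=\frac1k\big(I+(k-1)A\big)$, where $A$ is the normalized adjacency operator of the $1$-skeleton, i.e.\ the graph underlying $X_\emptyset$. On mean-zero functions every eigenvalue of $A$ is at least $\gamma_0^{(-)}$, so every eigenvalue of $DU$ there is at least $\frac{1+(k-1)\gamma_0^{(-)}}{k}$. Consequently
\[
\langle f,f_1\rangle\le \frac{k}{1+(k-1)\gamma_0^{(-)}}\,\big\|Df-\mathbb{E}[f]\big\|^2 .
\]
This step carries the dependence on the negative spectrum. It is also where I expect the main subtlety: one has to verify that the GK level-$1$ space really coincides with $V_1$ for the lower-walk variant of the decomposition, and that the normalization of $U$ matches the one used there.

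\textbf{Step 3 (globality).} Write $\mu=\mathbb{E}[f]$. Since $f$ is boolean, $Df\ge 0$, and $(\varepsilon,1)$-globality says $Df(v)\le \mu+\varepsilon$ for every $v$. Using $\mathbb{E}[Df]=\mu$,
\[
\|Df-\mu\|^2=\mathbb{E}[(Df)^2]-\mu^2\le(\mu+\varepsilon)\,\mathbb{E}[Df]-\mu^2=\varepsilon\mu\le\varepsilon .
\]
Combining this with Step 2 gives $\langle f,f_1\rangle\le\frac{k}{1+\gamma_0^{(-)}(k-1)}\varepsilon$, which is the claimed bound. The argument in fact gives the slightly stronger bound with an extra factor of $\mathbb{E}[f]$.
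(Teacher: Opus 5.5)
Your proof is correct and is essentially the paper's argument: the paper proves $\mathrm{Var}(D^k_1 f)\ge \lambda^{\cancel{0}}_{\min}(D^k_1U^k_1)\,\langle f,f_1\rangle$ by applying $U^k_1D^k_1$ to $f_1\in \mathrm{Im}(U^k_1)$, and your normal-equation/pseudo-inverse computation with $D^k_1U^k_1$ on $C_1$ is the dual form of that inequality; like you, it then bounds the spectrum via $D^k_1U^k_1=\frac1k I+\frac{k-1}{k}A$, and it cites BHKL for $\mathrm{Var}(D^k_1 f)\le\varepsilon\,\mathbb{E}[f]$, which you prove directly. The subtlety you flag is fine for the KO decomposition the paper uses: $f_0=\mathbb{E}[f]$, and $f_0+f_1$ is the projection onto $\mathrm{Im}(U^k_1)$ because $f_j\in\ker(D^k_{j-1})\subseteq\ker(D^k_1)=\mathrm{Im}(U^k_1)^\perp$ for every $j\ge 2$, while your $U$ coincides with the paper's $U^k_1$.
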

We remark that this is a corollary of a more general level $i$ inequality that scales with the minimum non-zero eigenvalue of the upper walks (see \Cref{thm:i-inequality}). One could instead apply this directly with the GK decomposition, but the resulting dependence on $\varepsilon$ is worse than can be achieved by combining these with Garland's method.

Given these three components, \Cref{thm:intro-exp} follows from a careful induction. We first prove the statement directly for $(\varepsilon,1)$-global functions using the GK-Decomposition and Level-1 inequality. For general $i$, we use Garland's lemma to reduce to $i=1$ inside links of $X$, writing the expansion as $\Phi(f) = \mathbb{E}_{\tau \sim \pi_{i-1}^f}[\Phi_\tau(f|_\tau)]$, and observe that the localization $f|_\tau$ of a $(\varepsilon,i)$ global function is itself $(\varepsilon-\delta_\tau,1)$ global for some $|\delta_\tau| \leq \varepsilon$. Applying the statement for $1$-global functions inside the expectation (carefully handling dependence on $\mathbb{E}_{\tau \sim \pi_{i-1}^f}[\delta_\tau]$) then gives the claimed bound.

\paragraph{Proof Overview of \Cref{thm:intro-local-to-global}:} The first main component of our low influence characterization, \Cref{lem:intro-Garland}, already shows that expansion of the lower walk exhibits local-to-global behavior. The proof of \Cref{thm:intro-local-to-global} would follow immediately if the same form of localization were to hold for the noise operator. Unfortunately, while the lemma may be extended to lower walks of arbitrary length, such a statement does not hold directly for the noise operator.

To see why, recall the noise operator is a convex combination of lower walks $T_\rho = \sum\limits_{i=0}^k B_k^\rho(i)N_k^{k-i}$ where $B_k^\rho(i)={k \choose i}(1-\rho)^{k-i}\rho^{i}$. Since expansion is linear, for any function $f$ we can write
\[
\Phi_{T_\rho}(f) = \sum\limits_{i=0}^k B_k^\rho(i)\Phi_{N_k^{k-i}}(f).
\]
Localizing $T_\rho$ naively hits two main issues. First, a walk $N_k^{i}$ can only be appropriately localized to $j$-links when $j \leq k-i$ (otherwise the walk goes below the level of the link, and cannot be captured at this level of locality). Second, the \textit{coefficients} of $T_\rho$, which are binomially distributed, depend on dimension and therefore do not localize correctly. In particular, after localizing `permissible' lower walks in the sum, we'd get:
\[
\Phi_{T_\rho}(f) = \sum\limits_{i=0}^{j-1} B_k^\rho(i)\Phi_{N_k^{i}}(f) + \mathbb{E}_{\tau \sim \pi^f_j}\left[\sum\limits_{i=j}^k B_k^\rho(i)\Phi_{N_{k-j}^{k-i}}(f|_\tau).\right]
\]
To correspond to the local noise operator on the right-hand side, we'd instead need the corresponding coefficients to be $B_{k-j}^\rho(i)$, the binomial distribution on $k-j$ trials, not $k$ trials as in the original instance. A first approach to solving these issues is to define a `shifted' noise operator over the original space that simply throws out the lefthand sum, which is negligible, and replaces the remaining coefficients with $B^\rho_{k-j}(i)$ over the original space. One can then argue that the expansion with respect to the shifted operator `localizes correctly' (allowing us to apply local SSE), and is moreover close to the expansion of the original noise operator. Unfortunately, this procedure results in error scaling with the TV-distance between Binomial distributions on $k$ and $k-j$ trials. This costs $\poly(k^{-1})$ additive error, and therefore does not give the desired global SSE theorem.

To fix this issue, we instead make the stronger assumption that links of $X$ are good two-sided or partite one-sided high dimensional expanders, a property satisfied by our main motivating example (the Ramanujan complexes). This allows us to appeal to the theory of Fourier analysis developed in \cite{dikstein2018boolean,bafna2020high,bafna2021hypercontractivity,gur2021hypercontractivity}, which shows one may decompose expansion with respect to the lower walks as
\[
\Phi_{N_k^i}(f) \approx \frac{1}{\langle f,f \rangle}\sum\limits_{\ell=0}^k \lambda(N^i_k)\langle f_\ell,f_\ell \rangle
\]
for some decomposition $f = \sum\limits_{i=0}^k f_i$ and approximate eigenvalues $\lambda(N^i_k)$. With this in mind, instead of shifting the noise operator, we can decompose the localized expansion in the Fourier basis as
\[
\Phi_{T_\rho}(f) \lesssim \sum\limits_{i=0}^{j-1} B_k^\rho(i) +  \underset{\tau \sim \pi_j^f}{\mathbb{E}}\left[\frac{1}{\langle f|_\tau,f|_\tau \rangle}\sum\limits_{i=j}^k B_k^\rho(i)\sum\limits_{\ell=0}^{k-j}\lambda_\ell(N_{k-j}^{k-i}|_\tau)\langle (f|_\tau)_\ell,(f|_\tau)_\ell \rangle\right]
\]
Finally for each fixed $\ell$, we show its corresponding coefficient in the above sum is at most $\rho^\ell$, the (approximate) eigenvalue of the local noise operator itself \cite{bafna2021hypercontractivity,gur2021hypercontractivity}. This allows us to write the desired localized inequality
\[
\Phi_{T_\rho}(f) \lesssim \sum\limits_{i=0}^{j-1} B_k^\rho(i) +  \underset{\tau \sim \pi_j^f}{\mathbb{E}}\left[\Phi_{T_\rho}(f|_\tau)\right]
\]
which may finally be bounded by the local SSE theorem using the fact that any $j$-restriction $f|_\tau$ of an $(\varepsilon,i)$-strongly global function is $(2\varepsilon,i-j)$-strongly global. The error in this analysis scales only with the cost of cutting off small $i$ (the first term) and the approximation error of the Fourier decomposition. For constant $j$, the former is at most $\exp(-k)$, while the latter is $2^{O(k)}\gamma$ when $j$-links are $\gamma$-HDX. Thus for complexes with sufficiently expanding links, we get an SSE theorem with negligible additive error as desired.

\subsection{Discussion and Related Work}\label{sec:discussion}

We close the section with a more in-depth comparison with prior work and a natural open problem such comparisons raise. At a high level, our work fits into a long line of research examining the structure of low-influence functions beyond the hypercube, starting not long after the KKL Theorem itself with the study of product spaces and the $p$-biased cube \cite{bourgain1992influence,talagrand1994russo,friedgut1996every,friedgut1998boolean,hatami2012structure,keevash2019hypercontractivity} and more recently in extended settings such as the slice \cite{khot2018small}, multi-slice \cite{filmus2018log,salez2020sharp}, symmetric group \cite{filmus2020hypercontractivity}, Grassmannian \cite{subhash2018pseudorandom,ellis2022forbidden}, high dimensional expanders \cite{bafna2020high,bafna2021hypercontractivity,gur2021hypercontractivity,gaitonde2022eigenstripping, gotlib2022fine}, and Lie groups \cite{ellis2024product}. We rely heavily on the Fourier analytic machinery initiated by Dikstein, Dinur, Filmus, and Harsha \cite{dikstein2018boolean}, and Kaufman and Oppenheim \cite{kaufman2020high}, and further extended by Bafna, Hopkins, Kaufman, and Lovett \cite{bafna2020high,bafna2021hypercontractivity}, Gur Lifshitz, and Liu \cite{gur2021hypercontractivity}, Hopkins \cite{hopkins2025hypercontractivity}, and Gotlib and Kaufman \cite{gotlib2022fine}.

Concretely, several bounds are known for the expansion of global sets with respect to the lower walk. In the regime of strong (inverse exponential) quantitative expansion, BHKL \cite{bafna2020high} prove any $(\varepsilon,i)$-global function has expansion at least
\[
\Phi(f) \gtrsim (1-\mathbb{E}[f])\frac{i+1}{k} - {k \choose i}\varepsilon.
\]
This is tight when $\varepsilon \ll \frac{1}{{k \choose i}}$, and mimics the structure one would expect to see on general $k$-fold product spaces. For strongly global functions, BHKL \cite{bafna2021hypercontractivity}, GLL \cite{gur2021hypercontractivity}, and Hopkins \cite{hopkins2025hypercontractivity} improve the $\varepsilon$-dependence to be \textit{independent of dimension}:
\[
\Phi(f) \gtrsim (1-\mathbb{E}[f])\frac{i+1}{k} - 2^{O(i)}\varepsilon.
\]
In the regime of weak high dimensional expansion, the only known bound for global functions is due to Gotlib and Kaufman \cite{gotlib2022beyond}, who show that on any \textit{one-sided} HDX:
\[
    \Phi(f) \geq (1-\mathbb{E}[f])\frac{1}{k-i}\prod\limits_{j=i}^{k-2}(1-\gamma_j) + |X(k)|\varepsilon.
\]
Unlike the prior bounds, however, this is only really meaningful when $\varepsilon \approx 0$, a non-trivial but extremely strong notion of globalness that does not lead to meaningful structural characterizations.

In some sense our bound, \Cref{thm:intro-exp}, sits between the latter two. it has the beneficial properties of the latter in that it holds for global functions and under arbitrary (two-sided) local-spectral expansion, and of the former in that the dependence on $\varepsilon$ is typically constant. On the other hand, due to being obtained through an inductive approach, it also inherits the main term of Gotlib-Kaufman \cite{gotlib2022fine}, scaling with $\frac{1}{k-i}$ instead of $\frac{i+1}{k}$. When $i$ and $k$ are large, the former dependence is substantially worse.

At a technical level, this difference in main term stems from the fact that our `advantage' for $i$-global functions, like in Gotlib-Kaufman \cite{gotlib2022fine}, comes from the second eigenvalue of the lower walk inside $i$-links of $X$, which scales as $1-\frac{1}{k-i+1}\prod\limits_{j=i-1}^{k-2}(1-\gamma_j)$. On the other hand, on very strong HDX, the advantage of $i$-global functions comes from the $i$th approximate eigenvalue of the global lower walk on $X$ itself, which scales with $1-\frac{i}{k}$. It is natural to conjecture the `correct' dependence sits between these two bounds, as $1-\frac{i}{k}\prod\limits_{j=i-1}^{k-2}(1-\gamma_j)$, closer to the type of bound known for the second eigenvalue from \cite{alev2020improved}. Unfortunately, both the techniques of this paper and Fourier analytic techniques from prior work (which typically incur exponential error) seem far from proving such a bound.

\section{Background}\label{sec:background}
We now give a more formal introduction to the language of high dimensional expansion, including simplicial complexes, local-spectral expansion, high order random walks, and influence/expansion of sets.
\subsection{Simplicial Complexes and Local-Spectral Expansion}
We study the structure of low influence functions on weighted \textit{pure simplicial complexes}.
\begin{definition}[Simplicial Complex]
A weighted, $d$-dimensional\footnote{We note that our notion of dimension is off by $1$ from much of the historical HDX literature, but generally leads to simpler expressions in our setting.} pure simplicial complex $(X,\Pi)$ on $n$ vertices consists of a complex
\[
X=X(0) \cup \ldots \cup X(d)
\]
where $X(d) \subseteq {[n] \choose d}$ is a $d$-uniform hypergraph and each $X(i) \subseteq {[n] \choose i}$ is given by downward closure:
\[
X(i) \coloneqq \left\{ \sigma \in {[n] \choose i}: \exists \tau \in X(d), \sigma \subset \tau \right\},
\]
and a joint distribution $\Pi=(\pi_1,\ldots,\pi_d)$ over $X(0) \times \ldots \times  X(d)$ where $\pi_i$ is induced from $\pi_d$ by sampling a uniformly random size-$i$ subset:
\[
\pi_i(\sigma) = \frac{1}{{d \choose i}}\sum\limits_{\tau \supset \sigma: |\tau|=d} \pi_d(\tau) .
\]
\end{definition}
Since all complexes considered in this work will be pure, weighted, and $d$-dimensional, we drop these monikers throughout and usually refer to $(X,\Pi)$ as just a simplicial complex. We write $\Delta_n$ to denote the complete simplicial complex on $n$ vertices.

We will sometimes work in the special setting of \textit{partite} complexes, which often play a special role in the theory of high dimensional expansion.
\begin{definition}[Partite Complex]
    A simplicial complex $(X,\Pi)$ is called partite if its vertices can be partitioned into $d$ `colors':
    \[
    X(1) = X^1 \amalg \ldots \amalg X^d
    \]
    such that every $d$-face has exactly one vertex from each color.
\end{definition}

There are many notions of high dimensional expansion on simplicial complexes. In this work we take the recently popular \textit{local-spectral} approach that analyzes the spectrum of local components called \textit{links}.
\begin{definition}[Link]
Given a complex $(X,\Pi)$ and a face $\tau \in X(i)$, the link of $\tau$ is the $(d-i)$-dimensional sub-complex $(X_\tau,\Pi_\tau)$ where
\[
X_\tau \coloneqq \{\sigma \setminus \tau \in X: \tau \subseteq \sigma \in X\}
\]
and $\Pi_\tau = (\pi_{\tau,0},\ldots, \pi_{\tau,d-i})$ is the distribution induced by restricting $\Pi$ to $X_\tau$ in the natural manner
\[
\pi_{\tau,d-i}(\sigma) = \frac{\pi_d(\sigma \cup \tau)}{\sum\limits_{\sigma' \in X_\tau}\pi_{d}(\sigma' \cup \tau)}.
\]
\end{definition}
Typically, a complex is said to be a \textit{local-spectral expander} if the graph underlying each link of co-dimension at least two is a spectral expander \cite{oppenheim2018local,kaufman2020high}. 
\begin{definition}[Underlying Graph]
Given a simplicial complex $(X,\Pi)$ and a link $(X_\tau,\Pi_\tau)$, the (edge)-weighted graph underlying $X_\tau$ is given by $G_{X_\tau}=(X_\tau(1),X_\tau(2),\pi_2)$, and has weighted adjacency (random walk) matrix
\[
A_\tau(v, w) = \pi_{\tau \cup v,1}(w).
\]
\end{definition}
In our work, it will be convenient to have finer grain control over the local spectral structure of these operators than is typically considered in the literature.
\begin{definition}[Local Spectra]
Let $(X,\Pi)$ be a simplicial complex. We define the local-spectral parameters $\{\gamma_i\}_{i=0}^{d-2}$ and $\{\gamma_i^{(-)}\}_{i=0}^{d-2}$ to be the worst non-trivial positive and negative eigenvalues respectively across links of each dimension: 
\[
\gamma_i \coloneqq \max_{\tau \in X(j)}\{0,\lambda_2(A_\tau)\}, \quad \quad \gamma_i^{(-)} \coloneqq \min_{\tau \in X(j)}\{\lambda_{\text{min}}(A_\tau)\}.
\]
\end{definition}
In more standard language, an infinite family of complexes are called \textit{one-sided} local-spectral expanders \cite{oppenheim2018local} if every $\gamma_i$ is bounded away from $1$, and \textit{two-sided} local-spectral expanders \cite{dinur2017high} if additionally each $\gamma_{i}^{(-)}$ is also bounded away from $-1$.
\subsection{Averaging Operators and the KO-Decomposition}
Given a complex $(X,\Pi)$, let $C_k = C_k(X,\mathbb{R}) \coloneqq \{f: X(k) \to \mathbb{R}\}$ denote the space of real-valued functions over $k$-sets of $X$. The function spaces associated with any complex come with a set of standard analysis tools, including a natural weighted inner product:
\[
\langle f,g \rangle \coloneqq \underset{\pi_i}{\mathbb{E}}[fg],
\]
and \textit{averaging operators} (sometimes called `unsigned boundary operators') that map between the $C_i$. In particular, for every $0 \leq i < d$, the \textit{Up Operator} $U_i: C_i \to C_{i+1}$ lifts functions between levels:
\[
\forall \sigma \in X(i+1):~ U_if(\sigma) = \frac{1}{i+1}\sum\limits_{\tau \subset \sigma: ~|\tau| = i} f(\tau),
\]
and for every $0 < i \leq d$ the \textit{Down Operator} $D_i: C_i \to C_{i-1}$ conversely lowers functions:
\[
\forall \sigma \in X(i-1):~ D_{i}f(\sigma) = \sum\limits_{\tau \supset \sigma: ~|\tau|=i}\pi_{\sigma,1}(\tau \setminus \sigma)f(\tau).
\]
It will often be convenient to lift or lower a function multiple levels, which can be done through the following compositions
\[
U^k_i \coloneqq U_{k-1}\circ \ldots \circ U_i, \quad \quad D^k_i \coloneqq D_{i+1}\circ \ldots \circ D_k.
\]
A crucial and well-known fact is that the averaging operators $D$ and $U$ are adjoint.
\begin{lemma}
Let $(X,\Pi)$ be a simplicial complex and $0 \leq k < d$. Then for all $f \in C_k$ and $g \in C_{k+1}$ we have
\[
\langle U_if, g \rangle = \langle f, D_{i+1}g \rangle.
\]
\end{lemma}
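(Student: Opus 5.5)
The plan is to verify the adjointness $\langle U_i f, g\rangle = \langle f, D_{i+1} g\rangle$ for $f \in C_i$ and $g \in C_{i+1}$ by expanding both inner products as sums over pairs $(\tau,\sigma)$ with $\tau \in X(i)$, $\sigma \in X(i+1)$, $\tau \subset \sigma$, and checking that the coefficients of each term $f(\tau)g(\sigma)$ agree. The only real input is the relation between the marginals $\pi_i$, $\pi_{i+1}$ and the link distribution $\pi_{\tau,1}$, all of which are induced from $\pi_d$.

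First, the left-hand side. By definition,
\[
\langle U_i f, g\rangle = \sum_{\sigma \in X(i+1)} \pi_{i+1}(\sigma)\, \frac{1}{i+1}\sum_{\tau \subset \sigma,\, |\tau|=i} f(\tau)\, g(\sigma).
\]
So the coefficient of $f(\tau)g(\sigma)$ is $\pi_{i+1}(\sigma)/(i+1)$.

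Next, the right-hand side. By definition,
\[
\langle f, D_{i+1} g\rangle = \sum_{\tau \in X(i)} \pi_i(\tau) \sum_{\sigma \supset \tau} \pi_{\tau,1}(\sigma\setminus\tau)\, f(\tau)\, g(\sigma).
\]
So the coefficient of $f(\tau)g(\sigma)$ is $\pi_i(\tau)\,\pi_{\tau,1}(\sigma\setminus\tau)$. It therefore suffices to show
\[
\pi_i(\tau)\,\pi_{\tau,1}(\sigma\setminus\tau) = \frac{\pi_{i+1}(\sigma)}{i+1}.
\]

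The key step, and the only place requiring care, is computing $\pi_{\tau,1}$ from the link definition. We have $\pi_{\tau,d-i}(\rho) = \pi_d(\rho\cup\tau)/Z_\tau$ with
\[
Z_\tau = \sum_{\eta \supset \tau} \pi_d(\eta) = \binom{d}{i}\pi_i(\tau).
\]
The vertex marginal is obtained by picking a uniformly random vertex of a top face of the link:
\[
\pi_{\tau,1}(v) = \frac{1}{d-i}\sum_{\eta \supset \tau\cup v} \frac{\pi_d(\eta)}{Z_\tau} = \frac{1}{d-i}\cdot\frac{\binom{d}{i+1}\pi_{i+1}(\tau\cup v)}{\binom{d}{i}\pi_i(\tau)}.
\]
Since $\binom{d}{i+1}\big/\bigl((d-i)\binom{d}{i}\bigr) = 1/(i+1)$, multiplying by $\pi_i(\tau)$ gives exactly $\pi_{i+1}(\sigma)/(i+1)$, as required.

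The main obstacle is just this normalization bookkeeping: making sure the link's level-$1$ marginal is defined via the same uniform-subset convention as the global $\pi_j$, so that the binomial factors cancel. Everything else is rearranging finite sums.
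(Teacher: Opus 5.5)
Your proof is correct: matching the coefficient of $f(\tau)g(\sigma)$ on both sides reduces adjointness to the identity $\pi_i(\tau)\,\pi_{\tau,1}(\sigma\setminus\tau) = \pi_{i+1}(\sigma)/(i+1)$. Your normalization computation verifies this identity, using $Z_\tau = \binom{d}{i}\pi_i(\tau)$ and $\binom{d}{i+1}/\bigl((d-i)\binom{d}{i}\bigr) = 1/(i+1)$. The paper gives no proof of its own and simply calls this a well-known fact; your double-counting argument is the standard verification, and you correctly read the statement's mismatched index $k$ as the same index as $i$.
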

The averaging operators also give a natural formalization of what it means for a function to `come from below,' namely being in $\text{Im}(U^k_i)$. This can be used in a number of ways to build natural function decompositions on simplicial complexes that break up a function into contributions from each level. In this work, we rely on the basis developed by Kaufman and Oppenheim \cite{kaufman2020high} which can be stated in terms of the averaging operators as follows.
\begin{definition}[KO-Decomposition \cite{kaufman2020high}]
Let $(X,\Pi)$ be a simplicial complex, $0 \leq k \leq d$, and $f \in C_k$. There exists an orthogonal decomposition $f = \sum\limits_{i=0}^k f_i$ such that each $f_i \in Im(U^k_i) \cap Ker(D^k_{i-1})$.
\end{definition}
\subsection{High Order Random Walks}
Just as expander graphs are inextricably tied to their underlying random walk, high dimensional expanders also bear close connection to high order analogs on their underlying complex. We will focus in particular in this work on two classical settings, the \textit{down-up walks}, and the \textit{noise operator}.
\begin{definition}[Down-up Walk {\cite{kaufman2016high,dinur2017high}}]
Given a complex $(X,\Pi)$, for any $0 < k \leq d$ the down-up walk on $k$-faces, denoted $N_k^1$, walks between $k$-faces of $X$ via a shared $(k-1)$-face, and can be written formally as
\[
N_k^1 \coloneqq U_{k-1}D_k.
\]
\end{definition}
It will also be useful for the sake of analysis to have access to longer versions of the down-up walk, as well as their corresponding up-down walks. With this in mind, define
\[
N_k^i \coloneqq U^k_{k-i}D^k_{k-i}, \quad \quad \widehat{N_k^i} \coloneqq  D^{k+i}_{k}U^{k+i}_{k}
\]
to be the walks that move between $k$-faces via a shared $(k-i)$-face and $(k+i)$-face respectively. Finally, we'll also need the following `non-lazy' variant of the up-down walk
\[
M_k^+ \coloneqq \frac{k}{k+1}D_{k+1}U_k - \frac{1}{k+1}I.
\]
Note that by adjointness of $D$ and $U$, all these walks are self-adjoint and therefore have spectral decompositions.

The lower walk can be phrased as the random process that removes a uniformly random element  $v \in \sigma$, and re-samples a new element conditioned on the remaining face $\sigma \setminus \{v\}$. We will also study a variant of this process called the \textit{noise operator}, which removes \textit{each} element in $\sigma$ with some fixed probability, then jointly re-samples these elements conditioned on the rest.
\begin{definition}[The Noise Operator]
Given a complex $(X,\Pi)$, for any $0 < k \leq d$ and $0 \leq \rho \leq 1$, the $\rho$-noise operator on $k$-faces, denoted $T^k_\rho$, re-samples each vertex with probability $1-\rho$:
    \[
T^k_\rho \coloneqq \sum\limits_{i=0}^k {k \choose i}(1-\rho)^i \rho^{k-i}N_k^i,
\]
\end{definition}
We remark that all the walks above are self-adjoint with respect to the inner product defined in the previous subsection, and therefore have a spectral decomposition.
\subsection{Boolean Function Analysis and Expansion}
The down-up walk and its variants capture a broad variety of structures studied throughout theoretical computer science. On the complete complex it gives the Johnson graphs, on spin-systems it gives the celebrated Glauber Dynamics \cite{anari2020spectral}, and on the natural simplicial representation\footnote{In particular, the complete $d$-partite complex on size-$2$ parts.} of $\{0,1\}^d$ it simply results in the standard hypercube graph \cite{bafna2021hypercontractivity} (up to laziness). It is this final connection which allows us to generalize the classical notion of \textit{total influence} of a boolean function, traditionally defined the hypercube as\footnote{Here $x^{\oplus i}$ denotes the string $x$ with the $i$th bit flipped.}
\[
I(f) = \sum\limits_{i \in d}\mathbb{E}_{x \sim \{0,1\}^d}\left[ \left(\frac{f(x) - f(x^{\oplus i})}{2}\right)^2\right] = d\langle f,Lf \rangle
\]
where $L=I-U_{k-1}D_k$ is the standard Laplacian operator. The latter equality leads to a natural interpretation on general simplicial complexes.
\begin{definition}[Total Influence on Complexes \cite{dikstein2018boolean,bafna2021hypercontractivity,gur2021hypercontractivity}]
Given a complex $(X,\Pi)$ and $0 < k \leq d$, the total influence of a boolean function $f \in C_k$ on $k$-sets is
\[
I(f) \coloneqq k\langle f, (I-U_{k-1}D_k)f \rangle
\]
\end{definition}
The total influence of a boolean function is also classically related to its \textit{combinatorial expansion}.
\begin{definition}[Combinatorial (Edge)-Expansion]
Let $M$ be random walk on universe $\Omega$ with stationary distribution $\pi$. The edge expansion of a subset $S \subset \Omega$ is the expected probability of leaving $S$ in a single step of $M$: 
\[
\Phi_M(1_S) \coloneqq \mathbb{E}_{\pi|_S}\left[M(v,X\setminus S)\right]
\]
where $\pi|_S(x)$ is the natural distribution induced from restricting $\pi$ to $S$ and $M(v,X\setminus S)$ is the total outgoing weight from $v$. It will also sometimes be useful to refer to \textbf{non-expansion}, denoted $\bar{\Phi}_M \coloneqq 1-\Phi_M$.
\end{definition} 
It is a simple exercise to show that the expansion of $S$ can also be written as the Rayleigh quotient of $1_S$ with respect to $I-M$:
\[
\Phi_{M}(1_S) = \frac{\langle 1_S, (I-M)1_S \rangle_\pi}{\langle 1_S,1_S \rangle_\pi}
\]
where $\langle f,g \rangle_{\pi}= \mathbb{E}_\pi[fg]$. With this in mind, it is an easy observation that since the stationary distribution of $N_k^1$ is $\pi_k$, the total influence of a function is (up to normalization) exactly its expansion with respect to the down-up walk. We will also frequently use the equiavlent forms

\[
\Phi_{M}(1_S) = 1- \frac{\langle 1_S, M1_S \rangle_\pi}{\langle 1_S,1_S \rangle_\pi} = 1- \frac{1}{\mathbb{E}[1_S]}\langle 1_S, M1_S \rangle_\pi,
\]
and write the \textit{non}-expansion as
\[
\bar{\Phi}_{M}(1_S) = \frac{\langle 1_S, M1_S \rangle_\pi}{\langle 1_S,1_S \rangle_\pi} = \frac{1}{\mathbb{E}[1_S]}\langle 1_S, M1_S \rangle_\pi
\]
Expansion of the noise operator is also a classical object of study in boolean function analysis, where it is sometimes referred to as a function's \textit{noise-sensitivity}. It is a classical result that sparse functions on the hypercube are noise-sensitive
\begin{theorem}[]
    Let $X=\{0,1\}^k$ be the hypercube complex. Then for any $0 \leq  \rho \leq 1$ and $S \subset X(k)$:
    \[
    \Phi_{T_\rho}(S) \geq 1 - \mathbb{E}[1_S]^{\frac{1-\rho}{1+\rho}}
    \]
\end{theorem}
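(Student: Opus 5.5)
The plan is to reduce to the classical hypercontractive (Bonami) inequality on the cube. First I will identify the operator. On the hypercube complex (the complete $k$-partite complex with parts of size $2$, uniform measure), a $k$-face is a string $x\in\{0,1\}^k$. The operator $N_k^i$ deletes a uniformly random $i$-subset of coordinates and resamples them uniformly, since the conditional distribution in a partite product complex is uniform on each deleted part. Hence $T_\rho$ keeps each coordinate independently with probability $\rho$ and otherwise rerandomizes it uniformly, as the paper notes when citing \cite{bafna2021hypercontractivity}. (The display in the definition places $\rho^{k-i}(1-\rho)^i$ on $N_k^i$, which is the same convention.) Therefore $T_\rho$ is the standard Bonami--Beckner noise operator. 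It acts diagonally on the Walsh--Fourier basis, $T_\rho \chi_A = \rho^{|A|}\chi_A$.

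Two structural facts then follow immediately from the Fourier description. First, $T_\rho$ is self-adjoint. Second, it satisfies the semigroup identity $T_\rho = T_{\sqrt\rho}\,T_{\sqrt\rho}$. Writing $f=1_S$ and $\mu=\mathbb{E}[1_S]$, these give
\[
\langle f, T_\rho f\rangle = \langle T_{\sqrt\rho} f, T_{\sqrt\rho} f\rangle = \|T_{\sqrt\rho} f\|_2^2 .
\]

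The key step is the Bonami hypercontractive inequality $\|T_r g\|_2 \le \|g\|_{1+r^2}$ for $0\le r\le 1$. I will take it as the classical result rather than reprove it. If a proof is wanted, the plan is to establish it for $k=1$ by a direct two-point calculus inequality and then tensorize by induction on $k$ using Minkowski's integral inequality; this is the only genuinely nontrivial ingredient.

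Applying the inequality with $r=\sqrt\rho$, and using that $f$ is boolean so $|f|^{p}=f$, gives
\[
\|T_{\sqrt\rho}f\|_2^2 \le \|f\|_{1+\rho}^2 = \mu^{2/(1+\rho)} .
\]
It remains to convert this to expansion using the Rayleigh quotient form
\[
\Phi_{T_\rho}(S) = 1-\frac{1}{\mu}\langle 1_S,T_\rho 1_S\rangle .
\]
Substituting the bound yields
\[
\Phi_{T_\rho}(S) \ge 1-\mu^{\frac{2}{1+\rho}-1} = 1-\mu^{\frac{1-\rho}{1+\rho}},
\]
which is the claimed inequality.
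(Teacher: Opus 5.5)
Your proof is correct. The identification of $T_\rho$ on the hypercube complex with the Bonami--Beckner operator is right, since the binomial mixture of uniformly random $i$-subset resamplings is exactly independent per-coordinate rerandomization with probability $1-\rho$. The chain $\langle 1_S, T_\rho 1_S\rangle = \|T_{\sqrt\rho}1_S\|_2^2 \le \|1_S\|_{1+\rho}^2 = \mu^{2/(1+\rho)}$, combined with the Rayleigh-quotient form of $\Phi_{T_\rho}$, then gives the claim for nonempty $S$. The paper gives no proof of this statement and quotes it only as classical background; your argument is the standard hypercontractivity derivation it is alluding to.
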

This result is also called the \textit{small-set expansion theorem}, as it states that small sets on the noisy hypercube expand near perfectly.
\subsection{Global Functions}
The small-set expansion theorem fails inherently over unbalanced domains, even in settings as simple as the $p$-biased cube. In particular, `local' functions like indicators or juntas which would be large sets on the cube become small, but maintain their otherwise poor expansion. A long and influential line of work has studied the extension of small-set expansion theorems to this and other settings beyond the cube by showing these are the \textit{only} bad examples. To this end, we call a function \textit{global} if it does not become much denser upon any restriction.
\begin{definition}[Global Function]
    A function $f: X(k) \to \{0,1\}$ is called $(\varepsilon,i)$ global if
\[
\forall \tau \in X(i): \underset{X_\tau}{\mathbb{E}}[f_\tau] \leq \mathbb{E}[f]+\varepsilon.
\]
\end{definition}
KKL-type theorems for global functions are sometimes called `booster' theorems, since they state any function with low total influence must see a density `bump' or `boost' inside some small restriction. This is the content, for instance, of Bourgain's influential Booster Theorem used in analysis of sharp thresholds \cite{friedgut1999sharp}.

For the case of small-set expansion, a more typical definition in the literature beyond the cube is to require the stronger guarantee that the set is not strongly correlated with any restriction. We call such functions \textit{strongly} global.
\begin{definition}[Strongly Global Function]
    A function $f: X(k) \to \{0,1\}$ is called $(\varepsilon,i)$-strongly global if
\[
\forall \tau \in X(i): \underset{X_\tau}{\mathbb{E}}[f_\tau] \leq \varepsilon.
\]
\end{definition}
Small-set expansion for strongly global functions roughly translates to statements of the form:
\[
\Phi_{T_\rho}(S) \geq 1 - \rho^i -c_i\varepsilon^{O_\rho(1)}.
\]
In other words, any function with expansion substantially worse than $1-\rho^i$ must be dense inside some $i$-link. 


\subsection{Localization, Garland's Method, and the Trickling-Down Theorem}
One of the most powerful tools in the study of high dimensional expansion is the idea of localization: breaking analysis of global functions or properties into localized parts. Given $f \in C_k$ and $\tau \in X(i)$, we define the \textit{localization} of $f$ to $\tau$, $f|_\tau: X_\tau(k-i) \to \R$, as
\[
\forall \sigma \in X_\tau(k-|\tau|): f|_\tau(\sigma) = f(\sigma \cup \tau).
\]
We will similarly need to consider localizations of the averaging operators. For any $\tau \in X(i)$, we write $U|_\tau$ and $D|_\tau$ to denote the up and down operators respectively on the link $(X_\tau, \Pi_\tau)$, and similarly denote the localized upper and lower walks as $N^i_{k-|\tau|}|_\tau$ and $\widehat{N}^i_{k-|\tau|}|_\tau$.

The key observation, often referred to as Garland's method,\footnote{In reference to Garland's original work \cite{garland1973p} using similar ideas far before the invent of high dimensional expansion.} is that the natural inner product on $X$ `respects localization.' We will use the following three formalizations of this notion which can be found across a variety of works \cite{oppenheim2018local,kaufman2020high,dikstein2018boolean}.\footnote{We note that some of these equalities hold more generally, but the above is sufficient for our purposes.}
\begin{lemma}[Garland's Method \cite{oppenheim2018local,kaufman2020high,dikstein2018boolean}]\label{lemma:garland}
On any simplicial complex $(X,\Pi)$:
\begin{align}
    \forall 0 \leq i \leq k \leq d~\text{and}~f,g \in C_k&: \langle f, g \rangle = \underset{\tau \sim \pi_i}{\mathbb{E}}\left[\langle f|_\tau, g|_\tau \rangle\right]\label{eq:gar-1}\\
    \forall 0 < k \leq d~\text{and}~ f\in C_k&: \langle N_k^1 f,f \rangle = \underset{v \sim \pi_1}{\mathbb{E}}\left[\langle N_{k-1}^1|_v f|_v, f|_v \rangle\right]\label{eq:gar-2}\\
    \forall 0 \leq k < d~\text{and}~ f\in C_k&: \langle M_k^+ f, f \rangle= \underset{\tau \sim \pi_{k-1}}{\mathbb{E}}\left[\langle A_\tau f|_\tau, f|_\tau \rangle\right] \label{eq:gar-3}
\end{align}
\end{lemma}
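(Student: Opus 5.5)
Each of the three identities is an exchange-of-sampling computation: write both sides as expectations over random tuples of faces drawn from $\Pi$, then check that the induced joint distributions agree. Throughout I use the standard fact that the following two procedures give the same joint law of $(\tau,\sigma)$ with $\tau\in X(i)$, $\sigma\in X(k)$, $\tau\subseteq\sigma$:
\begin{itemize}
\item draw $\sigma\sim\pi_k$ and then a uniformly random $i$-subset $\tau\subseteq\sigma$;
\item draw $\tau\sim\pi_i$ and then $\sigma\setminus\tau\sim\pi_{\tau,k-i}$.
\end{itemize}
This follows directly from the definition of $\pi_i$ as a uniform marginal of $\pi_d$ and of $\Pi_\tau$ as the conditional restriction.

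\emph{Proof of (1).} Write $\langle f,g\rangle=\mathbb{E}_{\sigma\sim\pi_k}[f(\sigma)g(\sigma)]$. Since the integrand depends only on $\sigma$, sampling $\tau$ first and then $\sigma\setminus\tau$ from the link leaves the law of $\sigma$ unchanged. The inner expectation is then exactly $\langle f|_\tau,g|_\tau\rangle$ in $(X_\tau,\Pi_\tau)$, which gives (1).

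\emph{Proof of (2).} Write $\langle N_k^1 f,f\rangle=\mathbb{E}[f(\sigma)f(\sigma')]$, where $(\sigma,\sigma')$ is one step of the down-up walk: $\sigma\sim\pi_k$, then $\rho\subset\sigma$ is a uniform $(k-1)$-subset, then $\sigma'\setminus\rho\sim\pi_{\rho,1}$. Equivalently, $\rho\sim\pi_{k-1}$ and $\sigma,\sigma'$ are two independent one-vertex extensions of $\rho$. The key step is to also pick a uniformly random vertex $v\in\rho$. This is legitimate when $k-1\ge1$; the case $k=1$ must be handled by convention, with $f|_v$ a function on $X_v(0)$ and $N_0^1$ the identity on constants, so I would note it separately or assume $k\ge2$. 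Then $(v,\rho)$ has the law $v\sim\pi_1$, $\rho\setminus v\sim\pi_{v,k-2}$. Conditioned on $v$, the triple $(\rho\setminus v,\sigma\setminus v,\sigma'\setminus v)$ is distributed as $(\rho',\sigma_0,\sigma_0')$, where $\rho'\sim\pi_{v,k-2}$ in $X_v$ and $\sigma_0,\sigma_0'$ are independent extensions of $\rho'$ under the link distribution. The point to check is that the link of $\rho'$ inside $X_v$ is $X_{\rho'\cup v}$ with the same $\pi_{\rho,1}$; this follows by transitivity of conditioning, $(X_v)_{\rho'}=X_{v\cup\rho'}$. This pair is exactly a step of $N_{k-1}^1|_v$, giving (2).

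\emph{Proof of (3).} Here $M_k^+$ is the non-lazy up-down walk: from $\sigma\in X(k)$ go up to $\sigma\cup\{w\}\sim\pi_{\sigma,1}$, then delete a uniform element other than $w$. Indeed $D_{k+1}U_k$ deletes a uniform element of the $k+1$, the deletion of $w$ itself returns to $\sigma$ with probability $\frac{1}{k+1}$, and subtracting $\frac{1}{k+1}I$ and rescaling by $\frac{k}{k+1}$ removes exactly this event. So a step of $M_k^+$ is: draw $\eta=\sigma\cup\{w\}\sim\pi_{k+1}$, then two distinct uniform vertices $u\ne w$ of $\eta$, and set $\sigma=\eta\setminus w$, $\sigma'=\eta\setminus u$. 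Let $\tau=\eta\setminus\{u,w\}\in X(k-1)$. Then $\tau\sim\pi_{k-1}$ and, given $\tau$, the ordered pair $(u,w)$ is an edge of $X_\tau$ drawn from the link edge distribution. This edge distribution is symmetric with vertex marginal $\pi_{\tau,1}$ and conditional $A_\tau(u,\cdot)$, because $A_\tau(v,w)=\pi_{\tau\cup v,1}(w)$. Since $f(\sigma)=f|_\tau(u)$ and $f(\sigma')=f|_\tau(w)$, we get $\mathbb{E}[f|_\tau(u)f|_\tau(w)]=\langle A_\tau f|_\tau,f|_\tau\rangle$, which is (3).

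The main obstacle is only bookkeeping, namely verifying the two-step sampling equivalence and the compatibility of iterated link distributions. Both reduce to the definitions of $\pi_i$ and $\Pi_\tau$ via the marginalization formula.
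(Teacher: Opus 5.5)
Your proof is correct. For (2), which is the only one of the three identities the paper actually proves (Appendix C, in the more general form $\langle f,N^i_kf\rangle=\mathbb{E}_{\tau\sim\pi_j}[\langle f|_\tau,N^i_{k-j}|_\tau f|_\tau\rangle]$, with (1) and (3) only cited), it is the paper's argument in probabilistic language: the paper writes both sides as squared norms of down operators, uses that the localized down operator at $\sigma$ in $X_\tau$ equals $D_kf(\tau\cup\sigma)$, and counts each $(k-1)$-face $k-1$ times, which is exactly your shared face $\rho$ with a uniform vertex $v\in\rho$.

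Two small fixes. First, the non-lazy walk you use in (3) is $\frac{k+1}{k}D_{k+1}U_k-\frac{1}{k}I$: after removing the lazy part you divide by $\frac{k}{k+1}$, not multiply. The paper's displayed definition of $M_k^+$ has this slip, and (3) holds only for the non-lazy normalization your argument actually uses. Second, in (2) no convention rescues $k=1$: the left side is $\mathbb{E}[f]^2$, while any operator on the one-dimensional space $C_0(X_v)$ gives a multiple of $\mathbb{E}[f^2]$, so you should simply assume $k\ge 2$.
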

The first modern application of Garland's method (albeit applied to a different type of localization) is Oppenheim's Trickling-Down Theorem \cite{oppenheim2018local}, which shows the spectral behavior of co-dimension $2$ links is inherited throughout the rest of the complex.
\begin{theorem}[Trickling-Down \cite{oppenheim2018local}]\label{thm:trickling}
Let $(X,\Pi)$ be a strongly connected complex. Then for all $0 \leq i \leq d-2$:
\begin{enumerate}
    \item $\gamma_{i} \leq \frac{\gamma_{d-2}}{1-(d-2-i)\gamma_{d-2}}$
    \item $\gamma^{(-)}_i \geq \frac{\gamma^{(-)}_{d-2}}{1-(d-2-i)\gamma^{(-)}_{d-2}}.$
\end{enumerate}
Moreover the latter holds without the assumption of strong connectivity.
\end{theorem}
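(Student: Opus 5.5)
The plan is to prove a single ``one-step'' trickling-down inequality and then iterate it by downward induction on $i$, starting from $i=d-2$. The one-step claim is the following. Fix $\tau \in X(i)$ with $i \le d-3$, and suppose that for every vertex $v \in X_\tau(1)$ the link graph $A_{\tau\cup v}$ has non-trivial spectrum contained in $[\beta^-,\beta]$. Then every non-trivial eigenvalue $\mu$ of $A_\tau$ satisfies
\[
\frac{\beta^-}{1-\beta^-}\le \mu,
\]
and if in addition $A_\tau$ is connected, also
\[
\mu \le \frac{\beta}{1-\beta}.
\]
Since the function $g(x)=x/(1-x)$ is increasing on $(-\infty,1)$ and satisfies
\[
g\!\left(\frac{\gamma}{1-m\gamma}\right)=\frac{\gamma}{1-(m+1)\gamma},
\]
applying the one-step claim $d-2-i$ times with the inductive hypotheses $\gamma_{i+1}\le \gamma_{d-2}/(1-(d-3-i)\gamma_{d-2})$ (and the analogous bound for $\gamma^{(-)}$) yields both bounds. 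In the positive case we assume the denominators stay positive, since otherwise the bound is vacuous. Strong connectivity ($\gamma_i<1$) is exactly what supplies the connectivity hypothesis at each level.

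For the one-step claim, we work inside the link $(X_\tau,\Pi_\tau)$, which is itself a complex, so by relabeling we may take $\tau=\emptyset$. Let $f\in C_1$ be an eigenvector of $A=A_\emptyset$ with $\langle f,1\rangle=0$ and eigenvalue $\mu$. We apply Garland's Method \eqref{eq:gar-3} at $k=1$. There $M_1^+=D_2U_1-\tfrac12 I$ rescaled is the non-lazy adjacency operator $A$, and the average runs over the vertex links $A_v$, acting on $f|_v$, which is $f$ restricted to the neighborhood of $v$. This gives
\[
\mu\|f\|^2=\langle Af,f\rangle=\mathbb{E}_{v\sim\pi_1}\langle A_v f|_v,f|_v\rangle .
\]
Inside each link we split $f|_v$ into its mean $\mathbb{E}_{X_v}[f|_v]=Af(v)=\mu f(v)$ and a part orthogonal to constants. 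The spectral hypothesis on $A_v$ then gives
\[
(1-\beta)(\mu f(v))^2+\beta\|f|_v\|^2 \;\ge\; \langle A_v f|_v,f|_v\rangle \;\ge\; (1-\beta^-)(\mu f(v))^2+\beta^-\|f|_v\|^2 .
\]
Averaging over $v$ and using \eqref{eq:gar-1} in the form $\mathbb{E}_v\|f|_v\|^2=\langle Af,f\rangle$ would be circular, so instead I will use the correct identity $\mathbb{E}_v\|f|_v\|^2=\|f\|^2$. That identity holds because the neighbor of a $\pi_1$-random vertex is again $\pi_1$-distributed, together with $\mathbb{E}_v (Af(v))^2=\mu^2\|f\|^2$. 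The result is
\[
(1-\beta)\mu^2-\mu+\beta\ge 0 \qquad\text{and}\qquad (1-\beta^-)\mu^2-\mu+\beta^-\le 0 .
\]

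Both quadratics have roots $1$ and $x/(1-x)$, with $x=\beta$ or $x=\beta^-$ respectively. The second inequality therefore forces $\beta^-/(1-\beta^-)\le\mu\le 1$, which needs no connectivity. The first forces either $\mu\le \beta/(1-\beta)$ or $\mu\ge 1$. The case $\mu=1$ for a non-constant $f$ is excluded by connectivity of the link graph, which is where strong connectivity enters only for the positive bound.

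The step I expect to need the most care is the bookkeeping in the localization. First, one must check that the localized inner products and the link distributions $\pi_{v,1}$ match the stated normalizations, so that $\mathbb{E}_v\|f|_v\|^2=\|f\|^2$ and the mean of $f|_v$ equals $Af(v)$. Second, the degenerate cases must be handled: $\beta\ge 1/2$ at intermediate stages, where the recursion may hit a nonpositive denominator and the bound should be read as trivial, and the sign conventions for $\gamma^{(-)}<0$ when applying monotonicity of $g$.
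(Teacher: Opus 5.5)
Your proof is correct. The paper does not prove this theorem and simply cites Oppenheim, and your argument is essentially Oppenheim's original proof: localize an eigenfunction to vertex links, split each restriction into its link-mean $\mu f(v)$ plus a mean-zero part, read off the quadratic $(\mu-1)\bigl((1-x)\mu-x\bigr)$, and iterate via $g(x)=x/(1-x)$, reading the positive bound as vacuous once $1-(d-2-i)\gamma_{d-2}\le 0$. The one slip is a citation. As stated in the paper, \eqref{eq:gar-3} at $k=1$ averages over $X(0)$ and is a tautology. So you need to verify directly the vertex-link identity $\langle Af,f\rangle=\mathbb{E}_{v\sim\pi_1}\langle A_v f_v,f_v\rangle$, where $f_v$ is the restriction of $f$ to $X_v(1)$ rather than the paper's $f|_v$. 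It follows from $\pi_1(v)\pi_{v,2}(\{u,w\})=\tfrac13\pi_3(\{u,v,w\})$ and summing over $v$.
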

We will make liberal use of Garland's method and the Trickling-Down Theorem throughout.

\section{A Booster Theorem for Arbitrary HDX}
Our first application of the basic inductive method is a weak KKL-type theorem for \textit{arbitrary} (two-sided) HDX. In particular, we will show that any function with low total influence on a complex whose local-spectral expansion is bounded away from $1$ must be \textit{local} (in particular, must have a booster). We first state the result in the contrapositive, that global sets expand.
\begin{theorem}[Global Sets Expand]\label{thm:expand}
Let $(X,\Pi)$ be a simplicial complex, $0 \leq k \leq d$, and $f \in C_k$ any $(\varepsilon,i)$-global boolean function. Then the expansion of $f$ with respect to the lower walk is at least
\[
\Phi(f) \geq \frac{1-\mathbb{E}[f]}{k-i}\prod\limits_{j=i}^{k-2}(1-\gamma_j) - c_{k,i,\gamma}\varepsilon
\]
where $c_{k,i,\gamma} \leq (k-i+1)\left(\frac{1}{k-i}\prod\limits_{j=i}^{k-2}(1-\gamma_j)-\frac{1}{k-i+1}\prod\limits_{j=i-1}^{k-2}(1-\gamma_j) \right)\left(1+(k-i)\gamma^{(-)}_{i-1}\right)^{-1}$
\end{theorem}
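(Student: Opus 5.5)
The proof follows the three-component outline of the introduction. First I establish the base case $i=1$ directly. Second I reduce general $i$ to $i=1$ inside $(i-1)$-links via a function-dependent Garland lemma.

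\textbf{Base case $i=1$.} Let $f$ be $(\varepsilon,1)$-global and write $f=\sum_\ell f_\ell$ in the GK-decomposition. The decomposition theorem gives
\[
\langle N_k^1 f,f\rangle \le \mathbb{E}[f]^2+\sum_{\ell=1}^{k-1}\Big(1-\tfrac{1}{k-\ell+1}\textstyle\prod_{j=\ell-1}^{k-2}(1-\gamma_j)\Big)\langle f,f_\ell\rangle .
\]
The coefficients are nondecreasing in $\ell$. I therefore bound every level $\ell\ge 2$ by the level-$2$ coefficient $1-\frac{1}{k-1}\prod_{j=1}^{k-2}(1-\gamma_j)$, and I keep the level-$1$ coefficient separately.

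Orthogonality and booleanity give $\sum_{\ell\ge1}\langle f,f_\ell\rangle=\mathbb{E}[f]-\mathbb{E}[f]^2$. Hence the right-hand side is at most
\[
\mathbb{E}[f]^2+(\mathbb{E}[f]-\mathbb{E}[f]^2)\Big(1-\tfrac{1}{k-1}\textstyle\prod_{j=1}^{k-2}(1-\gamma_j)\Big)+\Big(\tfrac{1}{k-1}\prod_{j=1}^{k-2}(1-\gamma_j)-\tfrac1k\prod_{j=0}^{k-2}(1-\gamma_j)\Big)\langle f,f_1\rangle .
\]
I then apply the level-$1$ inequality (\Cref{intro:level-i}), $\langle f,f_1\rangle\le \frac{k}{1+(k-1)\gamma_0^{(-)}}\varepsilon$. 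Dividing by $\mathbb{E}[f]$ yields the claimed bound, with the constant matching $c_{k,1,\gamma}$ up to the normalization by $\mathbb{E}[f]$. I will track this normalization carefully.

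\textbf{Inductive step.} For general $i$, I need a function-dependent Garland lemma. Take $\pi^f_{i-1}(\tau)\propto \pi_{i-1}(\tau)\mathbb{E}[f|_\tau]$. Iterating \eqref{eq:gar-2} gives $\langle N_k^1 f,f\rangle=\frac{1}{k}\mathbb{E}_{v}[\ldots]$-type identities. More precisely, one should verify that $\langle (I-N_k^1)f,f\rangle$ equals a fixed multiple of $\mathbb{E}_{\tau\sim\pi_{i-1}}\langle (I-N^1_{k-i+1}|_\tau)f|_\tau,f|_\tau\rangle$. Checking this constant is where the walk's laziness normalization must be handled. Dividing by $\mathbb{E}[f]=\mathbb{E}_{\pi_{i-1}}\mathbb{E}[f|_\tau]$ then yields $\Phi(f)=\mathbb{E}_{\tau\sim\pi^f_{i-1}}[\Phi_{X_\tau}(f|_\tau)]$, possibly up to a ratio like $\frac{k-i+1}{k}$ that gets absorbed.

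Each localization $f|_\tau$ lives on the link $X_\tau$, which has dimension $k-i+1$ and local parameters $\gamma_{i-1},\dots,\gamma_{k-2}$. Writing $\mu_\tau=\mathbb{E}[f|_\tau]=\mathbb{E}[f]+\delta_\tau$ with $\delta_\tau\le\varepsilon$, the function $f|_\tau$ is $(\varepsilon-\delta_\tau,1)$-global in the link. Applying the base case in the link gives
\[
\Phi(f)\ge \mathbb{E}_{\tau\sim\pi^f_{i-1}}\Big[\tfrac{1-\mu_\tau}{k-i}\textstyle\prod_{j=i}^{k-2}(1-\gamma_j)-c'(\varepsilon-\delta_\tau)\Big],
\]
where $c'$ is the base-case constant with $\gamma_0^{(-)}$ replaced by $\gamma_{i-1}^{(-)}$.

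The $\delta_\tau$ terms appear in two places. They enter with a negative sign in the main term, as $-\frac{\delta_\tau}{k-i}\prod_{j=i}^{k-2}(1-\gamma_j)$, and with a positive sign $c'\delta_\tau$ in the error. The net coefficient on $\delta_\tau$ is $c'-\frac{1}{k-i}\prod_{j=i}^{k-2}(1-\gamma_j)$, and I expect it to be handled as follows. Under $\pi^f_{i-1}$, the expectation $\mathbb{E}[\delta_\tau]=\mathbb{E}_{\pi_{i-1}}[\mu_\tau^2]/\mathbb{E}[f]-\mathbb{E}[f]$ is nonnegative, but it is only bounded by $\varepsilon$. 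So the worst case puts $\delta_\tau=\varepsilon$, and the result collapses to a single $c\varepsilon$ with $c$ as in the statement.

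\textbf{Main obstacle.} I expect the hardest part to be this bookkeeping: reconciling the sign and size of the $\mathbb{E}_{\pi^f}[\delta_\tau]$ contributions. In particular, I need the resulting constant to be exactly the difference-of-products form in $c_{k,i,\gamma}$ rather than something larger. A secondary difficulty is confirming the exact normalization in the function-dependent Garland identity, so that no stray dimension factor appears.
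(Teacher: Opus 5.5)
You follow the paper's architecture: the base case $i=1$ from \Cref{thm:decomp} plus the level-1 inequality, then the exact function-dependent Garland identity (\Cref{lemma:local-to-global-expansion}), then the base case inside $(i-1)$-links. However, the step you flag as the main obstacle is resolved the wrong way. After localizing you have
\[
\Phi(f)\ \ge\ \frac{1-\mathbb{E}[f]}{k-i}\prod_{j=i}^{k-2}(1-\gamma_j)\;-\;c'\varepsilon\;+\;\Big(c'-\frac{1}{k-i}\prod_{j=i}^{k-2}(1-\gamma_j)\Big)\,\mathbb{E}_{\tau\sim\pi^f_{i-1}}[\delta_\tau],
\]
with $0\le\mathbb{E}_{\pi^f_{i-1}}[\delta_\tau]\le\varepsilon$. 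Your Cauchy--Schwarz argument for the lower bound is exactly right; the paper uses it only implicitly. The right side is affine in $\mathbb{E}_{\pi^f_{i-1}}[\delta_\tau]$, so the worst case is an endpoint, fixed by the sign of the net coefficient.

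Plugging in $\delta_\tau=\varepsilon$, as you propose, does not give $c_{k,i,\gamma}$. It cancels the $c'$ term and leaves the constant $\frac{1}{k-i}\prod_{j=i}^{k-2}(1-\gamma_j)$. Without deciding the sign, the argument only yields $\max\{c',\frac{1}{k-i}\prod_{j=i}^{k-2}(1-\gamma_j)\}$. The missing ingredient is the comparison $c'\ge\frac{1}{k-i}\prod_{j=i}^{k-2}(1-\gamma_j)$. It makes $\mathbb{E}_{\pi^f_{i-1}}[\delta_\tau]=0$ the worst case, so the $\delta$-term can simply be dropped.

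To prove it, write $\lambda_{k,\ell}=1-\frac{1}{k-\ell+1}\prod_{j=\ell-1}^{k-2}(1-\gamma_j)$, so that $c'=\frac{(k-i+1)(\lambda_{k,i}-\lambda_{k,i+1})}{1+(k-i)\gamma^{(-)}_{i-1}}$. Since $\gamma^{(-)}_{i-1}\le 0$, the denominator is at most $1$. Then use the identity
\[
(k-i+1)(\lambda_{k,i}-\lambda_{k,i+1})=\Big(\tfrac{1}{k-i}+\gamma_{i-1}\Big)\prod_{j=i}^{k-2}(1-\gamma_j)\ \ge\ \tfrac{1}{k-i}\prod_{j=i}^{k-2}(1-\gamma_j),
\]
where the inequality uses $\gamma_{i-1}\ge 0$.

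There are also three smaller corrections.
\begin{itemize}
\item The GK coefficients are non-increasing in $\ell$, not nondecreasing. That fact, together with $\langle f,f_\ell\rangle=\norm{f_\ell}^2\ge 0$, is what justifies bounding all levels $\ell\ge 2$ by the level-2 coefficient.
\item You need the level-1 inequality in the form $\langle f,f_1\rangle\le\frac{k}{1+(k-1)\gamma^{(-)}_0}\varepsilon\,\mathbb{E}[f]$ of \Cref{cor:level-1}, which comes from $\mathrm{Var}(D^k_1f)\le\varepsilon\mathbb{E}[f]$. Without the factor $\mathbb{E}[f]$, dividing by $\mathbb{E}[f]$ leaves an error of order $\varepsilon/\mathbb{E}[f]$.
\item The Garland identity is exact: both $\langle f,N^1_kf\rangle$ and $\langle f,f\rangle$ are $\pi_{i-1}$-averages of their link counterparts. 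A stray ratio such as $\frac{k-i+1}{k}$ could not have been absorbed anyway, since it would rescale the main term.
\end{itemize}
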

While the constant $c_{k,i,\gamma}$ in \Cref{thm:expand} is somewhat hard to interpret a priori, we emphasize that for many reasonable settings $c_{k,i,\gamma} \leq O(1)$ is an \textit{absolute constant}, with no dependence on the dimension. This is the case, for instance, if $|\gamma_{i-1}^{(-)}| \ll \frac{1}{k-i}$,\footnote{Note $|\gamma_{i-1}^{(-)}|$ is always at most $\frac{1}{k-i}$ by Trickling-Down, so this is not such a strong assumption.} or when $\gamma_{k-2} \leq \frac{1}{k-1}$ and $\gamma_{d-2}^{(-)}$ is bounded away from one.


\Cref{thm:expand} is strongest in low dimensions due to the leading coefficient, but still has the following interesting interpretation in the high dimensional regime: any low-influence function must have a \textit{constant} booster inside some link of reasonably large co-dimension.
\begin{corollary}[Booster Theorem for general HDX]
There is a universal constant $c>0$ such that for any $i \in \mathbb{N}$, $\frac{1}{i} > \delta > 0$, and $k$-dimensional complex $(X,\Pi)$, any subset $S \subset X(k)$ with expansion at most:
\[
\Phi(S) \leq \frac{1-\mathbb{E}[1_S]}{i}\prod\limits_{j=k-i}^{k-2}(1-\gamma_j) - \delta 
\]
has constant correlation with a link of co-dimension $i$:
\[
\exists \tau \in X(k-i): \underset{X_\tau}{\mathbb{E}}[1_S] \geq \mathbb{E}[1_S]+ c'\delta
\]
where $c' \leq O(c_{k,k-i,\gamma}^{-1})$.
\end{corollary}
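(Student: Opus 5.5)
The plan is to obtain the corollary directly as the contrapositive of \Cref{thm:expand}, applied with the globalness level set to $k-i$ (links of co-dimension $i$) in place of $i$. With this substitution the main term of \Cref{thm:expand} becomes
\[
\frac{1-\mathbb{E}[1_S]}{k-(k-i)}\prod\limits_{j=k-i}^{k-2}(1-\gamma_j) = \frac{1-\mathbb{E}[1_S]}{i}\prod\limits_{j=k-i}^{k-2}(1-\gamma_j),
\]
which is exactly the threshold appearing in the hypothesis of the corollary. The error constant becomes $c_{k,k-i,\gamma}$, with the bound from \Cref{thm:expand} read at level $k-i$. In particular its last factor is $(1+i\gamma^{(-)}_{k-i-1})^{-1}$.

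Concretely, suppose for contradiction that no $\tau \in X(k-i)$ satisfies $\mathbb{E}_{X_\tau}[1_S] \geq \mathbb{E}[1_S]+c'\delta$. Then $1_S$ is $(c'\delta,k-i)$-global in the sense of the definition of global functions. \Cref{thm:expand} then gives
\[
\Phi(S) \geq \frac{1-\mathbb{E}[1_S]}{i}\prod\limits_{j=k-i}^{k-2}(1-\gamma_j) - c_{k,k-i,\gamma}\,c'\delta .
\]
I would choose $c' = \frac{1}{2c_{k,k-i,\gamma}}$, which is $O(c_{k,k-i,\gamma}^{-1})$ as required. With this choice the right-hand side is at least the threshold minus $\delta/2$. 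This is strictly larger than the assumed upper bound (threshold minus $\delta$), since $\delta>0$, giving the contradiction. Hence some co-dimension-$i$ link carries the claimed boost.

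The remaining work is bookkeeping on the constant, which I expect to be the only (mild) obstacle. First, if $c_{k,k-i,\gamma}\le 0$, the $\varepsilon$-term in \Cref{thm:expand} only helps. In that case any positive $c'$ works, and we may take $c'$ to be an absolute constant. Second, one should check that the factor $(1+i\gamma^{(-)}_{k-i-1})^{-1}$ is finite and positive, so that $c_{k,k-i,\gamma}$ is a genuine finite constant. This is where I would invoke the Trickling-Down Theorem (\Cref{thm:trickling}, second item, which needs no strong connectivity), which controls $|\gamma^{(-)}_{k-i-1}|$ by roughly $\frac{1}{i}$. Degenerate equality cases can be absorbed into the $O(\cdot)$.

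The side condition $\delta<\frac1i$ is not used in the argument. It only guarantees the hypothesis is non-vacuous, since the main term is at most $\frac1i$. Likewise the universal constant $c$ only enters through the $O(\cdot)$ in the choice of $c'$.
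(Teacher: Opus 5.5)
Your proposal is correct and is the argument the paper intends: the corollary has no separate proof and is meant as the contrapositive of \Cref{thm:expand} at globalness level $k-i$, and your choice $c'=\frac{1}{2c_{k,k-i,\gamma}}$ makes this precise. One small correction: for a $k$-dimensional complex, Trickling-Down (\Cref{thm:trickling}) only gives $1+i\gamma^{(-)}_{k-i-1}\ge 0$, not strict positivity, and equality occurs (e.g.\ for partite complexes); in that case the bound on $c_{k,k-i,\gamma}$ is vacuous and the corollary holds only trivially with $c'=0$ (by averaging over $\tau\sim\pi_{k-i}$), which is how your ``degenerate cases'' remark should be read.
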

We remark this is actually not implied by prior expansion theorems for the lower walk, even on stronger HDX, which would instead roughly imply the existence of an $O_\delta(k)$-link with $2^{-O_\delta(k)}$ density.

Finally, GLL observe that a bound on expansion of the lower walk of any simplicial complex $X$ immediately implies a Kruskal-Katona Theorem (see \cite[Section 8.3]{gur2021hypercontractivity}). Recall that give a set $S \subset X(k)$, the lower shadow of $S$, denoted $\partial S$, is the set of all $(k-1)$-faces $\tau$ contained in some $s \in S$. We get the following corollary for general simplicial complexes.
\begin{corollary}[{\Cref{cor:intro-kruskal}} Restated]\label{cor:kruskal}
    Let $(X,\Pi)$ be a $k$-dimensional simplicial complex and $S \subset X(k)$ any $(\varepsilon,i)$-global set. Then
\[
\mathbb{E}[\partial S] \geq \mathbb{E}[S]\left(1+\frac{1-\mathbb{E}[S]}{k-i}\prod\limits_{j=i}^{k-2}(1-\gamma_j) - c_{k,i,\gamma}\varepsilon.\right)
\]
where $c_{k,i,\gamma} \leq (k-i+1)\left(\frac{1}{k-i}\prod\limits_{j=i}^{k-2}(1-\gamma_j)-\frac{1}{k-i+1}\prod\limits_{j=i-1}^{k-2}(1-\gamma_j) \right)\left(1+(k-i)\gamma^{(-)}_{i-1}\right)^{-1}$
\end{corollary}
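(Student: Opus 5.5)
We derive \Cref{cor:kruskal} directly from \Cref{thm:expand}, using the observation of GLL that the lower shadow controls the non-expansion of $S$ under the down-up walk $N_k^1 = U_{k-1}D_k$. Write $f=1_S$ throughout.

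First we rewrite non-expansion through the down operator. By adjointness of $U_{k-1}$ and $D_k$,
\[
\langle f, N_k^1 f\rangle = \langle D_k f, D_k f\rangle = \underset{\pi_{k-1}}{\mathbb{E}}[(D_kf)^2].
\]
The function $D_kf$ takes values in $[0,1]$ and is supported on $\partial S$. It is supported there because $D_kf(\sigma)>0$ exactly when some $\tau\supset\sigma$ with $\tau \in S$ receives positive weight, which means $\sigma\in\partial S$. We also have $\mathbb{E}_{\pi_{k-1}}[D_kf]=\mathbb{E}_{\pi_k}[f]$, since $D_k$ preserves expectations; this is adjointness applied to the constant function.

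Next we apply Cauchy--Schwarz on the support:
\[
\mathbb{E}[f]^2 = \mathbb{E}[D_kf\cdot 1_{\partial S}]^2 \leq \mathbb{E}[(D_kf)^2]\,\mathbb{E}[1_{\partial S}].
\]
Rearranging gives
\[
\mathbb{E}[\partial S] \geq \frac{\mathbb{E}[f]^2}{\langle f,N_k^1f\rangle} = \frac{\mathbb{E}[f]}{\bar\Phi(f)} = \frac{\mathbb{E}[f]}{1-\Phi(f)}.
\]
Using $\frac{1}{1-x}\geq 1+x$ for $x<1$, we obtain $\mathbb{E}[\partial S]\geq \mathbb{E}[S](1+\Phi(S))$. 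This requires $\Phi(S)<1$, which holds whenever $S\neq\emptyset$, because $\langle f,N_k^1 f\rangle = \mathbb{E}[(D_kf)^2]>0$. If $S=\emptyset$ the claim is trivial.

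Finally, since $S$ is $(\varepsilon,i)$-global, \Cref{thm:expand} applies with the same constant $c_{k,i,\gamma}$:
\[
\Phi(S)\geq \frac{1-\mathbb{E}[S]}{k-i}\prod_{j=i}^{k-2}(1-\gamma_j) - c_{k,i,\gamma}\varepsilon.
\]
Substituting this into $\mathbb{E}[\partial S]\geq \mathbb{E}[S](1+\Phi(S))$ gives exactly the stated inequality.

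There is no real obstacle here. The only point needing care is the normalization of $\mathbb{E}[\partial S]$: it is measured under $\pi_{k-1}$, and $D_k$ must push $\pi_k$ forward to $\pi_{k-1}$. This holds by the definition of $\Pi$ via uniform subsets, and it is exactly what the adjointness lemma encodes.
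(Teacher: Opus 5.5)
Your proof is correct and follows the route the paper intends. The paper gives no separate argument; it defers to the observation of GLL \cite[Section 8.3]{gur2021hypercontractivity} that Cauchy--Schwarz applied to $D_k 1_S$ (supported on $\partial S$, with mean $\mathbb{E}[S]$) gives $\mathbb{E}[\partial S]\geq \mathbb{E}[S]/\bar{\Phi}(S)\geq \mathbb{E}[S](1+\Phi(S))$, and then substitutes \Cref{thm:expand}, exactly as you do.

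One small nitpick: $\langle 1_S,N_k^1 1_S\rangle>0$ needs $\mathbb{E}[S]>0$ rather than just $S\neq\emptyset$, but the zero-measure case is trivial anyway.
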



\subsection{Garland's Lemma for Expansion}
The first key component of \Cref{thm:expand} is a new variant of Garland's Lemma. While the tool is already broadly used in the study of expansion of walks on HDX (see e.g.\ \cite{dikstein2018boolean,bafna2020high,bafna2021hypercontractivity}), it is typically applied \textit{indirectly} throughout the analysis. We rely on the key (albeit basic) insight that Garland's Method also applies directly to expansion itself.
\begin{lemma}[Garland's Lemma for Expansion]\label{lemma:local-to-global-expansion}
Let $(X,\Pi)$ be a weighted, pure simplicial complex and $f \in C_k$ any function on $k$-faces. Then the (non)-expansion of $f$ with respect to the lower walk $N_k^i$ can be written as an expectation of the (non)-expansion of $f$ restricted to links. That is for all $i \leq k$ and $j \leq k-i$:
\begin{equation}
\bar{\Phi}_{N^i_k}(f) = \underset{\tau \sim \pi^f_{j}}{\mathbb{E}}[\bar{\Phi}_{N^{i}_{k-j}|_{\tau}}(f|_\tau)],
\end{equation}
where $\pi^f_{j}$ is the distribution given by:
\[
\pi^f_{j}(\tau) = \pi_j(\tau) \frac{\langle f|_\tau, f|_\tau \rangle}{\langle f,f \rangle}.
\]
\end{lemma}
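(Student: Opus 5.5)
The plan is to reduce the lemma to a Garland-type identity for the quadratic form $\langle N_k^i f, f\rangle$, and then renormalize by the $f$-dependent weights. By definition,
\[
\bar{\Phi}_{N_k^i}(f) = \frac{\langle N_k^i f,f\rangle}{\langle f,f\rangle}, \qquad \bar{\Phi}_{N^i_{k-j}|_\tau}(f|_\tau) = \frac{\langle N^i_{k-j}|_\tau f|_\tau, f|_\tau\rangle}{\langle f|_\tau,f|_\tau\rangle}
\]
(for boolean $f$ this is the non-expansion, and in general the Rayleigh quotient). Since $\pi^f_j(\tau)=\pi_j(\tau)\langle f|_\tau,f|_\tau\rangle/\langle f,f\rangle$, the right-hand side of the lemma equals
\[
\frac{1}{\langle f,f\rangle}\underset{\tau\sim\pi_j}{\mathbb{E}}\bigl[\langle N^i_{k-j}|_\tau f|_\tau, f|_\tau\rangle\bigr].
\]
Faces with $f|_\tau=0$ receive zero weight, so the ill-defined quotient there is harmless. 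It therefore suffices to prove the $f$-independent identity
\[
\langle N_k^i f,f\rangle = \underset{\tau\sim\pi_j}{\mathbb{E}}\bigl[\langle N^i_{k-j}|_\tau f|_\tau, f|_\tau\rangle\bigr] \quad \text{for all } j\le k-i,
\]
which generalizes \eqref{eq:gar-2} (the case $i=j=1$) to longer walks and deeper links.

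To prove this identity I would write both sides as expectations over pairs of $k$-faces. By adjointness, $\langle N_k^i f,f\rangle=\langle D^k_{k-i}f, D^k_{k-i}f\rangle$. This is $\mathbb{E}_{\sigma\sim\pi_{k-i}}[(\mathbb{E}_{\alpha\supset\sigma}f(\alpha))^2]$, the expectation of $f(\alpha)f(\beta)$ where $\sigma\sim\pi_{k-i}$ and $\alpha,\beta$ are drawn independently from the conditional distribution of $k$-faces containing $\sigma$. Equivalently, draw $\alpha\sim\pi_k$, let $\sigma$ be a uniformly random $(k-i)$-subset of $\alpha$, and draw $\beta\supset\sigma$ from the conditional law. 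I would then sample $\tau$ as a uniformly random $j$-subset of $\sigma$; this is possible because $j\le k-i$, which is exactly where the hypothesis enters. Marginally $\tau\sim\pi_j$, and conditioned on $\tau$ the triple $(\alpha\setminus\tau,\sigma\setminus\tau,\beta\setminus\tau)$ should have precisely the law of one step of $N^i_{k-j}|_\tau$ started from the link distribution $\pi_{\tau,k-j}$. Concretely, $\alpha\setminus\tau\sim\pi_{\tau,k-j}$, then $\sigma\setminus\tau$ is a uniform $(k-j-i)$-subset of it, and then $\beta\setminus\tau$ is re-sampled conditioned on $\sigma\setminus\tau$ in the link. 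Taking the conditional expectation given $\tau$ yields $\langle N^i_{k-j}|_\tau f|_\tau,f|_\tau\rangle$, and averaging over $\tau$ gives the identity. An alternative is induction on $j$ using \eqref{eq:gar-2}-style one-vertex localization, since links of links are links; the direct coupling seems cleaner.

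The main obstacle is checking that the conditional laws match. The link measures are defined by renormalizing $\pi_d$ restricted to faces containing $\tau$, while the global $\pi_k$ and the conditional re-sampling are induced from $\pi_d$ via uniform subsets. I would verify this by the standard computation that, for $\tau\subset\sigma\subset\alpha$, the probability of the chain $(\tau,\sigma,\alpha)$ under the global process factors as $\pi_j(\tau)$ times the link probability. Both quantities are proportional to $\pi_k(\alpha)$ times the uniform nested-subset probabilities, so the factors $\binom{k}{j}$, $\binom{k-j}{k-j-i}$ and their global counterparts should cancel, as in the proof of \eqref{eq:gar-1}. The conditional law of $\beta$ given $\sigma$ is the same globally and in the link of $\tau$, because both are proportional to $\pi_k(\beta)$ over $\beta\supset\sigma$. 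Once this bookkeeping is done, the lemma follows by the renormalization in the first paragraph.
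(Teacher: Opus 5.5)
Your proposal is correct and follows the paper's route. The paper also reduces to the $f$-independent identity $\langle f, N_k^i f\rangle = \mathbb{E}_{\tau\sim\pi_j}[\langle f|_\tau, N^i_{k-j}|_\tau f|_\tau\rangle]$ and renormalizes by $\pi_j^f$; it proves that identity in its appendix by writing it as $\|D^k_{k-i}f\|^2$, splitting each $(k-i)$-face as $\tau\cup\sigma$ with $\pi_j(\tau)\pi_{\tau,k-i-j}(\sigma)=\pi_{k-i}(\tau\cup\sigma)/{k-i \choose j}$, and noting that the local and global down operators agree. Your coupling argument is the probabilistic phrasing of that same computation, and your remark that faces with $f|_\tau=0$ get zero weight handles a degenerate case slightly more carefully than the paper does.
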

\begin{proof}
Localizing via \Cref{eq:gar-2}, we have:
\begin{align*}
    \bar{\Phi}(f) &= \frac{1}{\langle f,f \rangle}\sum\limits_{\tau \in X(j)} \pi_j(\tau) \langle f|_\tau, N_{k-j}^i|_\tau f|_\tau \rangle\\
    &= \frac{1}{\langle f, f \rangle}\sum\limits_{\tau \in X(j)} \pi_j(\tau) \langle f|_\tau, f|_\tau \rangle \bar{\Phi}_{N^{i}_{k-j}|_{\tau}}(f|_\tau)\\
    &=\sum\limits_{\tau \in X(j)}\pi_j^f(\tau)\bar{\Phi}_{N^{i}_{k-j}|_{\tau}}(f|_\tau).
\end{align*}
The fact that $\pi_j^f$ is a distribution follow from noting that $\pi_j^f>0$ by construction, and by a simple application of \Cref{eq:gar-1}:
\begin{align*}
    \sum\limits_{\tau \in X(j)} \pi_j^f(\tau) &= \sum\limits_{\tau \in X(j)} \pi_j(\tau) \frac{\langle f|_\tau, f|_\tau \rangle}{\langle f,f \rangle}\\
    &= \frac{1}{\langle f,f \rangle}\underset{\tau \sim \pi_j}{\mathbb{E}}[\langle f|_\tau, f|_\tau \rangle]\\
    &= 1
\end{align*}
\end{proof}
We emphasize that this variant of Garland's lemma differs from the typical form in that the distribution over links is \textit{function dependent}. Thus slightly more care must be taken in its application, as typical terms which average out over the standard link distribution may not do so in the function dependent one.
\subsection{The KO/GK Decomposition}
The proof of \Cref{thm:expand} also relies on a decomposition of Gotlib and Kaufman \cite{gotlib2022fine}, itself based on the following decomposition of Kaufman and Oppenheim \cite{kaufman2020high}, which we restate here for convenience.
\begin{definition}[KO-Decomposition \cite{kaufman2020high}]
Let $(X,\Pi)$ be a simplicial complex, $0 \leq k \leq d$, and $f \in C_k$. There exists an orthogonal decomposition of $f$ into levels
\[
f = \sum\limits_{i=0}^k f_i
\]
such that $f_i \in Im(U^k_i) \cap Ker(D^k_{i-1})$.
\end{definition}
The KO-basis is not known to be an eigenbasis (even approximately) on weak HDX, but Gotlib and Kaufman \cite{gotlib2022fine} recently showed it is still possible to gain some advantage by understanding how a function projects onto its components. We will use a variant of their result for the lower walk.
\begin{theorem}\label{thm:decomp}
For any simplicial complex $(X,\Pi)$, $0 < k \leq d$, and $f \in C_{k}$, we have:
\[
\langle N_k^1 f,f \rangle \leq \mathbb{E}[f]^2 + \sum\limits_{i=1}^{k-1}\left(1-\frac{1}{k-i+1}\prod\limits_{j=i-1}^{k-2}(1-\gamma_j) \right) \langle f,f_i \rangle.
\]
\end{theorem}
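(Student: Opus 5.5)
We prove \Cref{thm:decomp} by induction on $k$, localizing $N_k^1$ to vertex links via \Cref{eq:gar-2}. The idea is to read off how each KO-level $f_i$ restricts to links and to control the top level with the second eigenvalue of the lower walk.

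\textbf{Step 1: the spectral gap we need.} Let $\lambda_2(N_m^1)$ denote the largest non-trivial eigenvalue of the lower walk on $m$-faces. We first establish
\[
\lambda_2(N_m^1) \leq 1-\frac{1}{m}\prod_{j=0}^{m-2}(1-\gamma_j).
\]
This is the standard local-to-global bound of Alev--Lau / Kaufman--Oppenheim type. It follows by induction from two facts. The first is the identity $\frac{m}{m+1}D_{m+1}U_m = M_m^+ + \frac{1}{m+1}I$. The second is that $N_{m+1}^1 = U_mD_{m+1}$ and $D_{m+1}U_m$ share their nonzero spectrum. Finally, by \Cref{eq:gar-3}, $\langle M_m^+ g,g\rangle$ is controlled by $\gamma_{m-1}$ plus the non-trivial part of $g$ on the previous level.

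Applied inside a link of an $(i-1)$-face, which has local parameters $\gamma_{i-1},\dots,\gamma_{k-2}$ and level $m=k-i+1$, this bound produces exactly the factor
\[
1-\frac{1}{k-i+1}\prod_{j=i-1}^{k-2}(1-\gamma_j).
\]

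\textbf{Step 2: the inductive argument, organized level by level.}
\begin{itemize}
\item Write $f=\sum_i f_i$ in the KO-decomposition, so that $\langle N_k^1 f,f\rangle=\sum_{i,i'}\langle N_k^1 f_i,f_{i'}\rangle$.
\item Use that $N_k^1=U_{k-1}D_k$ kills the level-$k$ part, since $f_k\in\ker D_k$. The top level therefore contributes nothing, which is why the sum stops at $k-1$.
\item Use $f_0=\mathbb{E}[f]$, which is an eigenvector with eigenvalue $1$ and gives the $\mathbb{E}[f]^2$ term.
\item For $1\le i\le k-1$, aim to show the "diagonal-plus-cross" estimate
\[
\sum_{i'}\langle N_k^1 f_i, f_{i'}\rangle \leq \left(1-\frac{1}{k-i+1}\prod_{j=i-1}^{k-2}(1-\gamma_j)\right)\langle f,f_i\rangle .
\]
\end{itemize}

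To get this estimate, localize $f_i$ to $(i-1)$-faces $\tau$ using \Cref{eq:gar-1} and \Cref{eq:gar-2} iterated $i-1$ times. The key structural fact is that $f_i\in \ker D^k_{i-1}$. This forces $f_i|_\tau$ to have zero mean in the link of $\tau$: averaging $f_i|_\tau$ over $X_\tau$ is $D^k_{i-1}f_i(\tau)$ up to normalization, which is $0$. So Step 1 applies to $f_i|_\tau$ in $X_\tau$ at level $k-i+1$, yielding the claimed coefficient.

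We also have to justify that the cross terms $\langle N_k^1 f_i,f_{i'}\rangle$ are absorbed. Summing over $i'$ gives $\langle N_k^1 f_i, f\rangle$. We handle it via the adjoint form $\langle D_k f_i, D_k f\rangle$, localized simultaneously, and apply Step 1 to the mean-zero piece. Mean-zero parts of $f|_\tau$ coming from higher levels pair nonnegatively with $f_i|_\tau$ by orthogonality of the KO-decomposition, since $D_kf_i\in\operatorname{Im}(U)$.

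\textbf{Main obstacle.} The delicate step is this cross-term control. The KO-basis is not an eigenbasis, so $N_k^1 f_i$ leaks into other levels, and we must bound $\langle N_k^1 f_i,f\rangle$ rather than $\langle N_k^1 f_i,f_i\rangle$.

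The first approach is to follow Gotlib--Kaufman: write $f_i=U^k_i g_i$ with $D_i g_i\perp$ lower levels, and expand
\[
\langle N_k^1 f_i,f\rangle=\langle D^{k}_{i-1}\cdots\rangle .
\]
Then apply the Step 1 bound inside each $(i-1)$-link to the localized function $(N_k^1 f)|_\tau$ paired with $f_i|_\tau$, using Cauchy--Schwarz only on the non-trivial component. If signs are problematic, a fallback is to prove the weaker statement with $\langle f_i,f_i\rangle$ and then note $\langle f,f_i\rangle=\langle f_i,f_i\rangle$ by orthogonality. The two forms agree, so the cross terms vanish in the final sum, and only the per-level Rayleigh bound is needed.
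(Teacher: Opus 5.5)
\textbf{There is a genuine gap: the cross terms.} Your diagonal estimate is sound. Write $\lambda_{k,i}=1-\frac{1}{k-i+1}\prod_{j=i-1}^{k-2}(1-\gamma_j)$. Since $f_i\in\ker D^k_{i-1}$, each localization $f_i|_\tau$ with $\tau\in X(i-1)$ has mean zero in $X_\tau$. Localizing to $(i-1)$-links and applying the Alev--Lau bound there therefore gives $\langle N_k^1 f_i,f_i\rangle\le\lambda_{k,i}\|f_i\|^2$.

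The problem is the off-diagonal part. The per-level inequality $\langle N_k^1 f_i,f\rangle\le\lambda_{k,i}\langle f,f_i\rangle$ that Step 2 aims for is false in general. Replace some other component $f_{i'}$ by $tf_{i'}$. This leaves $f_i$, and hence the right-hand side, unchanged. The left-hand side, however, changes by $(t-1)\langle N_k^1 f_i,f_{i'}\rangle$, so the inequality fails for large $|t|$ whenever this cross term is nonzero.

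It is generally nonzero, because $N_k^1$ does not preserve the KO levels on a general complex. For example, take $k=3$ and $f_1=U^3_1g_1$. One computes $\langle N_3^1f_1,f_2\rangle=\frac13\langle g_1,\phi\rangle$, where $\phi(a)=\mathbb{E}_{bc\sim\pi_{a,2}}[D_3f_2(bc)]$. This vanishes on the complete complex but not in general.

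Your fallback conflates two different cross terms. KO-orthogonality kills $\langle f_i,f_{i'}\rangle$, which is why $\langle f,f_i\rangle=\langle f_i,f_i\rangle$ on the right. It says nothing about $\langle N_k^1 f_i,f_{i'}\rangle$ on the left. So the cross terms do not ``vanish in the final sum,'' and per-level Rayleigh bounds alone cannot control the form. The other suggestions do not close the gap either:
\begin{enumerate}
\item Cauchy--Schwarz cannot produce a bound of the shape $\lambda_{k,i}\|f_i\|^2$, since it brings in the norms of the other components.
\item A nonnegative pairing of higher-level parts with $f_i|_\tau$ would push in the wrong direction for an upper bound.
\end{enumerate}

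\textbf{The missing idea is to never split the quadratic form by level.} The paper instead proves, by induction on $k$, a bound on the whole form $\langle N_k^1\sum_{i=1}^k f_i,\sum_{i=1}^k f_i\rangle$ for arbitrary $f_i\in\ker D^k_{i-1}$.
\begin{enumerate}
\item Dropping the $\operatorname{Im}(U^k_i)$ condition makes this class closed under restriction to vertex links.
\item The only error terms allowed are of the form $c_{i,j}\langle f_i,f_j\rangle$. These localize exactly under \Cref{eq:gar-1} and vanish at the end by KO-orthogonality.
\item In the inductive step one localizes to vertices via \Cref{eq:gar-2}, so $f_{i+1}|_v$ plays the role of level $i$ in $X_v$.
\item The only piece violating the kernel conditions is the constant part $(N|_v)^{k-1}_{k-1}(f_1|_v)$. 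It is orthogonal to everything else and fixed by the walk. Its total mass $\mathbb{E}_v\|(N|_v)^{k-1}_{k-1}(f_1|_v)\|^2=\|D^k_1f_1\|^2$ is at most $(\frac1k+\frac{k-1}{k}\gamma_0)\|f_1\|^2$ by \Cref{lemma:advantage}.
\item Combining this with the coefficient $\lambda_{k,2}$ on the rest of $f_1|_v$ gives exactly $\lambda_{k,1}$.
\item The base case $k=2$ is $\langle N_2^1(f_1+f_2),f_1+f_2\rangle=\|D_2f_1\|^2$, where no cross term arises.
\end{enumerate}
This is how the leakage $\langle N_k^1f_i,f_{i'}\rangle$ is absorbed without ever being bounded on its own.
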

The proof is similar to \cite[Theorem 7.9]{gotlib2022fine}, which focuses on the non-lazy upper walk instead, and is given in \Cref{app:GK}.

\subsection{A Level-$i$ Inequality and the Upper Bound}\label{sec:pseudo}
With this in mind, a natural goal is now to show that global functions project mostly onto large components, and therefore expand well. In fact, it turns out a variant of this statement is true for \textit{all} simplicial complexes, where the strength of the bound scales with the smallest non-zero eigenvalue of the upper walks.\footnote{Note these walks are PSD by adjointness of $D$ and $U$, so $\lambda^{\cancel{0}}_{\text{min}}$ is strictly positive.}
\begin{theorem}[Level-$i$ Inequality]\label{thm:i-inequality}
Let $(X,\Pi)$ be a simplicial complex. Then for any $0 < k \leq d$ and $(\varepsilon,i)$-global boolean function $f \in C_k$, the following level-$i$ inequality holds:
\[
\sum\limits_{j=1}^i \langle f, f_j \rangle \leq \frac{1}{\lambda^{\cancel{0}}_{\text{min}}(\widehat{N}_i^{k-i})} \varepsilon\mathbb{E}[f],
\]
where $\lambda^{\cancel{0}}_{\text{min}}$ is the smallest non-zero eigenvalue.
\end{theorem}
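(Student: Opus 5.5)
Let $g \coloneqq D^k_i f \in C_i$. This is the function $\tau \mapsto \mathbb{E}_{X_\tau}[f_\tau]$, since iterated down operators average $f$ over the link of $\tau$ under the induced distribution. The plan is to compare the projection of $f$ onto levels $1,\ldots,i$ with the variance of $g$, and then to bound that variance using the globalness assumption.

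\textbf{Step 1: write the low-level part as an image of $U^k_i$.} Let $f_{\le i} = \sum_{j=0}^i f_j$. Each $f_j$ lies in $Im(U^k_j) \subseteq Im(U^k_i)$, so $f_{\le i} = U^k_i h$ for some $h \in C_i$. The high-level part $f_{>i} = \sum_{j>i} f_j$ lies in $Ker(D^k_i)$, because $f_j \in Ker(D^k_{j-1})$ and $D^k_i = D^{j-1}_i D^k_{j-1}$ for $j>i$. Hence $g = D^k_i f = D^k_i U^k_i h = \widehat{N}^{k-i}_i h$. Orthogonality and adjointness give
\[
\langle f, f_{\le i}\rangle = \langle f, U^k_i h\rangle = \langle g, h\rangle .
\]
We may take $h$ orthogonal to $Ker(U^k_i) = Ker(\widehat{N}^{k-i}_i)$; the second equality holds because $\langle \widehat N h,h\rangle = \|U h\|^2$. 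Writing $\widehat N = \widehat{N}^{k-i}_i$, which is PSD, we get $h = \widehat N^{+} g$ and
\[
\langle f, f_{\le i}\rangle = \langle g, \widehat N^{+} g\rangle \le \frac{1}{\lambda^{\cancel{0}}_{\min}(\widehat N)}\langle g, g\rangle_{\perp},
\]
where $\langle g,g\rangle_\perp$ is the squared norm of the projection of $g$ onto the nonzero eigenspaces of $\widehat N$.

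\textbf{Step 2: remove level $0$.} The constant function lies in the $\lambda = 1$ eigenspace of $\widehat N$, and $f_0 = \mathbb{E}[f]$. Removing this part from both sides gives
\[
\sum_{j=1}^i \langle f, f_j\rangle = \langle g, \widehat N^{+} g\rangle - \mathbb{E}[f]^2 \le \frac{1}{\lambda^{\cancel{0}}_{\min}}\bigl(\langle g,g\rangle - \mathbb{E}[g]^2\bigr).
\]
To justify this, restrict to $g - \mathbb{E}[g]$: it is orthogonal to constants, which are an invariant eigenvector, and $\mathbb{E}[g] = \mathbb{E}[f]$ by stationarity of the down operators.

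\textbf{Step 3: use globalness.} Since $f$ is boolean, $0 \le g$, and globalness gives $g(\tau) \le \mathbb{E}[f] + \varepsilon$ for every $\tau \in X(i)$. Therefore
\[
\mathrm{Var}(g) = \mathbb{E}[g^2] - \mathbb{E}[g]^2 \le (\mathbb{E}[f]+\varepsilon)\mathbb{E}[g] - \mathbb{E}[f]^2 = \varepsilon\,\mathbb{E}[f].
\]
Combining this with Step 2 gives the claimed inequality.

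The main technical point is Step 1. One must check that $f_{\le i}$ really is $U^k_i$ of the pseudo-inverse solution, and that $\langle f, f_{\le i}\rangle = \langle g, \widehat N^{+}g\rangle$ exactly. This needs the KO levels above $i$ to be killed by $D^k_i$, and $g$ to lie in $Im(\widehat N)$; the latter holds because $g = D^k_i f = D^k_i f_{\le i} \in Im(D^k_i U^k_i)$.
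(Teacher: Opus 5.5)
Your proof is correct and follows the paper's route: bound the mass on levels $1$ through $i$ by $\mathrm{Var}(D^k_i f)$ divided by the smallest nonzero eigenvalue, then apply $\mathrm{Var}(D^k_i f)\le\varepsilon\mathbb{E}[f]$, which the paper cites from BHKL and you prove inline in Step 3. The only difference is cosmetic: the paper lower-bounds the Rayleigh quotient of $U^k_iD^k_i$ on $\sum_{j=1}^i f_j\in\mathrm{Ker}(D^k_i)^\perp$ and transfers the spectrum to $D^k_iU^k_i$, whereas you work on $C_i$ directly with the pseudo-inverse of $\widehat{N}^{k-i}_i$.
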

Unfortunately, while \Cref{thm:i-inequality} is a non-trivial statement for any fixed complex, $\lambda^{\cancel{0}}_{\text{min}}(\widehat{N}_i^{k-i})$ may be unbounded, i.e., may depend on the number of vertices in the complex). The crucial observation is that this issue can be fixed whenever the complex is locally far from bipartite.

We focus in particular on the special case of $i=1$, which is the only setting we need for our approach. 
\begin{corollary}[Level-1 Inequality]\label{cor:level-1}
Let $(X,\Pi)$ be a simplicial complex, $k \leq d$, and $f \in C_k$ any function on $k$-faces. Then
\[
\langle f,f_1 \rangle \leq \frac{k}{1 + \gamma_{0}^{(-)}(k-1)}\varepsilon\mathbb{E}[f] \leq \frac{k}{1 + \gamma_{d-2}^{(-)}\cdot\frac{k-1}{1-(d-2)\gamma_{d-2}^{(-)}}}\varepsilon\mathbb{E}[f]
\]
\end{corollary}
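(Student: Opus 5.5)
The corollary is \Cref{thm:i-inequality} at $i=1$, combined with a lower bound on the smallest non-zero eigenvalue of $\widehat{N}_1^{k-1}=D^k_1U^k_1$ acting on $C_1$. By \Cref{thm:i-inequality} with $i=1$, it suffices to show
\[
\lambda^{\cancel{0}}_{\text{min}}(\widehat{N}_1^{k-1}) \geq \frac{1+\gamma_0^{(-)}(k-1)}{k}.
\]
The second inequality in the statement then follows by plugging in part 2 of the Trickling-Down Theorem (\Cref{thm:trickling}) with $i=0$, namely $\gamma_0^{(-)} \ge \gamma_{d-2}^{(-)}/(1-(d-2)\gamma_{d-2}^{(-)})$. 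The right-hand side is monotone in $\gamma_0^{(-)}$, so the bound only weakens; this step holds without strong connectivity.

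To bound the eigenvalue, I will compute $\widehat{N}_1^{k-1}$ explicitly on $C_1$. A function $g \in C_1$ is a function on vertices. The operator $U^k_1 g(\sigma)$ averages $g$ over the $k$ vertices of $\sigma$, and $D^k_1$ averages over $k$-faces containing $v$ drawn from the link distribution. For fixed $v$ and $\sigma \ni v$, the value $U^k_1 g(\sigma)$ equals $\frac1k g(v)+\frac{k-1}{k}\cdot(\text{average of } g \text{ over the other } k-1 \text{ vertices})$. Each other vertex of $\sigma\setminus v$, for $\sigma$ drawn from the link of $v$, is marginally distributed as $\pi_{v,1}$, which is exactly the neighbor distribution of the underlying $1$-skeleton walk $A_\emptyset$. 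This gives the identity
\[
\widehat{N}_1^{k-1} = \frac{1}{k}I + \frac{k-1}{k}A_\emptyset .
\]
Equivalently, it follows by iterating the standard relation $D_{j+1}U_j = \frac{1}{j+1}I+\frac{j}{j+1}M_j^+$, or by checking directly through \Cref{eq:gar-3} at the bottom level. The eigenvalues are therefore $\frac{1+(k-1)\lambda}{k}$ for $\lambda$ ranging over the spectrum of $A_\emptyset$, and every such $\lambda$ is at least $\lambda_{\min}(A_\emptyset)\ge\gamma_0^{(-)}$. This gives the claimed lower bound on every eigenvalue, including the smallest non-zero one.

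The main obstacle is the edge case where the bound is vacuous or the operator is singular, i.e.\ $1+\gamma_0^{(-)}(k-1)\le 0$. Then the statement has a non-positive denominator and should be read as trivially true or excluded, so I would note this explicitly. Beyond that, the only care needed is the exact computation of $D^k_1U^k_1$ under the paper's normalization of $U$, $D$, and the link distributions. In particular, I must confirm that the marginal of a non-$v$ vertex in a random $k$-face through $v$ is $A_\emptyset(v,\cdot)$. This holds because $\pi_k$ induces $\pi_2$ by uniform subsampling, which gives exactly the needed consistency. I will also check that \Cref{thm:i-inequality} applies with the $\varepsilon\mathbb{E}[f]$ normalization, since the corollary's $\varepsilon$ refers to $(\varepsilon,1)$-globality.
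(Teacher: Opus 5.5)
Your proposal is correct and follows the paper's own proof: apply the level-$i$ inequality at $i=1$, use $D^k_1U^k_1=\frac{1}{k}I+\frac{k-1}{k}M_1^+$ (with $M_1^+$ the non-lazy walk $A_\emptyset$ on the $1$-skeleton) to bound every eigenvalue below by $\frac{1+(k-1)\gamma_0^{(-)}}{k}$, and get the second inequality from Trickling-Down. Your direct derivation of that operator identity, where the paper only cites it, and your remark on the degenerate case $1+(k-1)\gamma_0^{(-)}=0$ are useful additions, but they do not change the argument.
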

\begin{proof}
The second inequality follows from the Trickling-Down Theorem. Then by \Cref{thm:i-inequality}, it is enough to lower bound $\lambda_{\text{min}}(D^k_1U_1^k) \leq \lambda^{\cancel{0}}_{\text{min}}(D^k_1U_1^k)$, i.e.\ to show that for any $f \in C_k$:
\[
\frac{\langle f, D^k_1U^k_1f\rangle}{\langle f,f \rangle} \geq \left(\frac{1}{k} + \gamma_0^{(-)}\frac{k-1}{k}\right).
\]
We appeal to the fact that $D^k_1U^k_1 = \frac{k-1}{k}M_1^+ + \frac{1}{k}I$ (see e.g.\ \cite{alev2019approximating,gotlib2022fine}) and observe that for any $f \in C_k$:
\begin{align*}
    \langle f, D^k_1U^k_1f \rangle &= \frac{1}{k} \langle f,f \rangle + \frac{k-1}{k}\langle f, M_1^+ f \rangle\\
    &\geq \left(\frac{1}{k} + \gamma_0^{(-)}\frac{k-1}{k}\right)\langle f,f \rangle
\end{align*}
as desired.
\end{proof}


We now prove \Cref{thm:i-inequality} itself. Similar to the approach of \cite{bafna2020high}, we first show a level-$i$ inequality in terms of variance before reducing to the standard setting.
\begin{lemma}[Level-$i$ Inequality (Variance)]\label{lem:variance-nonzero}
Let $(X,\Pi)$ be a simplicial complex, $k \leq d$, and $f: C_k \to \mathbb{R}$ any function on $k$-faces. Then for all $i > 0$:
\[
\text{Var}(D^k_i f) \geq \lambda^{\cancel{0}}_{\text{min}}(\widehat{N}_i^{k-i}) \sum\limits_{j=1}^i \langle f,f_j \rangle
\]
\end{lemma}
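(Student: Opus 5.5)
We first write the variance as a quadratic form. Since $D^k_i$ preserves expectation and the KO-components $f_j$ with $j>0$ have mean zero, we have
\[
\mathrm{Var}(D^k_i f) = \langle D^k_i f, D^k_i f\rangle - \mathbb{E}[f]^2 = \langle D^k_i (f-f_0), D^k_i (f-f_0)\rangle,
\]
because $f_0=\mathbb{E}[f]$ is constant and $D^k_i f_0 = f_0$ is orthogonal to $D^k_i(f-f_0)$. Next we use the KO structure. For $j>i$, $f_j\in \mathrm{Ker}(D^k_{j-1})$ and $D^k_{j-1}=D^{i}_{j-1}\circ$ (wrong direction), so the relevant fact is the reverse one: $D^k_i f_j=0$ for $j>i$, since $f_j\in\mathrm{Ker}(D^k_{j-1})$ and $D^k_i = D^{j-1}_i D^k_{j-1}$. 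Hence $D^k_i f = D^k_i g$ with $g \coloneqq \sum_{j=1}^i f_j$ (after subtracting the mean). Moreover each $f_j$ with $j\le i$ lies in $\mathrm{Im}(U^k_j)\subseteq \mathrm{Im}(U^k_i)$, so $g = U^k_i h$ for some $h\in C_i$, and we may take $h\in \mathrm{Ker}(U^k_i)^\perp$.

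The key step is then the following computation:
\[
\|D^k_i g\|^2 = \langle D^k_i U^k_i h, D^k_i U^k_i h\rangle = \langle \widehat{N}_i^{k-i} h, \widehat{N}_i^{k-i} h\rangle .
\]
Since $h$ is orthogonal to $\mathrm{Ker}(U^k_i)=\mathrm{Ker}(\widehat{N}_i^{k-i})$ (the latter because $\widehat{N}=D U$ with $D=U^*$), and $\widehat N$ is PSD and self-adjoint, we can expand $h$ in the nonzero eigenvectors. This gives $\|\widehat N h\|^2 \ge \lambda^{\cancel{0}}_{\min}\langle h,\widehat N h\rangle = \lambda^{\cancel{0}}_{\min}\langle U h, U h\rangle = \lambda^{\cancel{0}}_{\min}\|g\|^2$. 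Finally, $\|g\|^2 = \sum_{j=1}^i\langle f_j,f_j\rangle = \sum_{j=1}^i\langle f,f_j\rangle$ by orthogonality of the decomposition, which yields the lemma.

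The main obstacle is bookkeeping rather than conceptual. We must confirm that $D^k_i$ kills the levels $j>i$, which uses the factorization $D^k_i = D^{j-1}_i D^k_{j-1}$ and the convention on $D^k_{i-1}$ in the KO definition. We must also check that the mean-zero/level-$0$ part splits off cleanly, i.e.\ that $f_0$ is exactly the constant $\mathbb{E}[f]$ and that $D$ fixes constants. Choosing $h$ in the orthogonal complement of the kernel is what makes the nonzero-eigenvalue bound apply.
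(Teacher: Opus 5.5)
Your proof is correct and follows essentially the paper's route: split off $f_0=\mathbb{E}[f]$, note that $D^k_i$ kills the levels $j>i$, use that levels $1,\dots,i$ lie in $\mathrm{Im}(U^k_i)=\mathrm{Ker}(D^k_i)^\perp$, and bound by the smallest nonzero eigenvalue. The difference is cosmetic. The paper takes the Rayleigh quotient of $U^k_iD^k_i$ on $C_k$ and transfers to $\widehat{N}_i^{k-i}=D^k_iU^k_i$ via their shared nonzero spectrum, whereas you pull back to a preimage $h\in C_i$ and use $\|\widehat{N}h\|^2\ge\lambda\langle h,\widehat{N}h\rangle$ directly, with $\lambda$ the smallest nonzero eigenvalue; grouping everything into the single function $g=\sum_{j=1}^i f_j$ also makes explicit the cross-term cancellation that the paper's termwise step leaves implicit.
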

\begin{proof}
First, observe that by adjointness of the averaging operators we can write:
\begin{align*}
\langle D^k_i f, D^k_i f \rangle &= \langle f, U^k_iD^k_i f \rangle\\
&= \sum\limits_{j=0}^k \langle f, U^k_iD^k_i f_j \rangle\\
&= \sum\limits_{j=0}^i \langle f, U^k_iD^k_i f_j \rangle
\end{align*}
since $f_j \in \text{Ker}(D^k_{j-1})$, and therefore that 
\[
\text{Var}(D^k_i f) = \sum\limits_{j=1}^i \langle f, U^k_iD^k_i f_j \rangle
\]
since $\langle f, f_0\rangle =\mathbb{E}[f]^2= \mathbb{E}[D^k_0f]^2$. 

The trick is now to observe that each $f_j$ is completely orthogonal to the kernel of $U^k_iD^k_i$. If this is the case, we then have
\begin{align*}
\text{Var}(D^k_i f) &\geq \lambda_{\text{min}}^{\cancel{0}}(U^k_iD^k_i) \sum\limits_{j=1}^i \langle f, f_j \rangle\\
&= \lambda^{\cancel{0}}_{\text{min}}(\widehat{N}_i^{k-i}) \sum\limits_{j=1}^i \langle f,f_j \rangle
\end{align*}
since $U^k_iD^k_i$ and $D^k_iU^k_i$ are PSD and have the same non-zero spectrum.

To see that $f_j$ has no projection onto $\text{Ker}(U^k_iD^k_i)$, observe that $\text{Ker}(U^k_iD^k_i) = \text{Ker}(D^k_i)$ by adjointness of $D$ and $U$. Namely if $U^k_iD^k_if=0$, then $\langle f, U^k_iD^k_if \rangle = \norm{D^k_if}^2=0$ so $f \in \text{Ker}(D^k_i)$ (the inclusion in the other direction is trivial). Finally since $j \leq i$ we have
\[
f_j \in \text{Im}(U_j^k) \supset \text{Im}(U_i^k) = \text{Ker}(D^k_i)^\perp
\]
by definition of $f_j$ and adjointness of $D$ and $U$, so we are done.
\end{proof}
The proof of \Cref{thm:i-inequality} now follows immediately from the $\ell_\infty$-to-$\ell_2$ reduction introduced in \cite{bafna2020high}.
\begin{proof}[Proof of \Cref{thm:i-inequality}]
BHKL observed that any $(\varepsilon,i)$-global boolean function satisfies $\text{Var}(D^k_i f) \leq \varepsilon\mathbb{E}[f]$. Combined with \Cref{lem:variance-nonzero} this completes the proof.
\end{proof}
We note that this result can be generalized to real-valued functions under some light regularity conditions (and arbitrary non-negative functions) as in \cite{bafna2020high}, but we focus on the boolean case here for simplicity.

We are finally equipped to prove \Cref{thm:expand}.

\begin{proof}[Proof of \Cref{thm:expand}]
For notational convenience, for $1 \leq \ell < k \leq d$ define
\[
\lambda_{k,\ell} \coloneqq 1-\frac{1}{k-\ell+1}\prod\limits_{j=\ell-1}^{k-2}(1-\gamma_j)
\]
and set $\lambda_{k,0}=1$ and $\lambda_{k,k}=0$.

We first prove the result for $i=1$, then show how to reduce to this setting in general via localization. When $i=1$, we actually show a slightly stronger bound
\begin{equation}
    (1-\mathbb{E}[f])(1-\lambda_{k,2}) - \frac{(\lambda_{k,1}-\lambda_{k,2})}{\frac{1}{k} + \frac{k-1}{k}\gamma^{(-)}_0}\varepsilon.
\end{equation}
This follows fairly immediately from \Cref{thm:decomp} and properties of the $\{\lambda_{k,i}\}$ and $\{f_i\}$ decomposition:
\begin{align*}
\Phi(f) &\geq 1 - \frac{1}{\mathbb{E}[f]}\sum\limits_{\ell=0}^{k}\lambda_{k,\ell} \langle f,f_\ell \rangle\\
&\geq 1 - \frac{1}{\mathbb{E}[f]}\sum\limits_{\ell=0}^{1}\lambda_{k,\ell}\langle f,f_\ell \rangle - \frac{\lambda_{k,2}}{\mathbb{E}[f]}\sum\limits_{\ell=2}^{k} \langle f,f_\ell \rangle & \text{($\lambda_{k,i}$ are non-increasing)}\\
&\geq 1 - \frac{1}{\mathbb{E}[f]}\sum\limits_{\ell=0}^{1}\lambda_{k,\ell}\langle f,f_\ell \rangle - \frac{\lambda_{k,2}}{\mathbb{E}[f]}\left(\mathbb{E}[f] - \sum\limits_{\ell=0}^{1}\langle f,f_\ell \rangle\right) & \text{(Booleanity)}\\
&= 1 - \frac{1}{\mathbb{E}[f]}\sum\limits_{\ell=0}^{1}(\lambda_{k,\ell}-\lambda_{k,2})\langle f,f_\ell \rangle -\lambda_{k,2}\\
&= (1-\mathbb{E}[f])(1-\lambda_{k,2}) - \frac{1}{\mathbb{E}[f]}(\lambda_{k,1}-\lambda_{k,2})\langle f,f_1\rangle & \text{($\langle f,f_0\rangle = \mathbb{E}[f]^2$)}\\
&\geq (1-\mathbb{E}[f])(1-\lambda_{k,2}) - \frac{(\lambda_{k,1}-\lambda_{k,2})}{\frac{1}{k} + \frac{k-1}{k}\gamma^{(-)}_0}\varepsilon & \text{(\Cref{cor:level-1})}
\end{align*}
We observe by the same proof, the above also holds even when $\lambda_{k,1}$ and $\lambda_{k,2}$ are replaced with any values $a$,$b$ such that $a \geq \lambda_{k,1}$ and $b \geq \lambda_{k,2}$.

The inductive step relies on the observation that global functions are approximately closed under localization. This requires some care to achieve the stated bound. In particular, observe that if $f$ is $(\varepsilon,i)$-global, for any $\tau \in X(i-1)$ we can write
\[
\mathbb{E}[f|_\tau] = \mathbb{E}[f]+\delta_\tau
\]
where $|\delta_\tau| \leq \varepsilon$, and further that $f|_\tau$ is then $(\varepsilon-\delta_\tau,1)$-global. With this in mind, applying Garland's method for expansion (\Cref{lemma:local-to-global-expansion}), we have
\[
 \Phi(f) = \underset{\tau \sim \pi_{i-1}^f}{\mathbb{E}}[\Phi_{\tau}(f|_\tau)].
\]
Observe that for all $\tau \in X(i-1)$, the `local' $\lambda_{i,j}$ values in the link $X_\tau$ (denoted $\lambda^\tau_{i,j}$ and defined analogously with respect to $X_\tau$) are upper bounded by their corresponding values in the original complex, that is $\lambda^\tau_{k-i+1,1} \leq \lambda_{k,i}$ and $\lambda^\tau_{k-i+1,2} \leq \lambda_{k,i+1}$. Then applying the base case to the above localization, we get
\begin{align*}
    \Phi(f)
    &\geq \underset{\tau \sim \pi_{i-1}^f}{\mathbb{E}}\left[(1-\mathbb{E}[f]-\delta_\tau)(1-\lambda_{k,i+1}) - \frac{(\lambda_{k,i}-\lambda_{k,i+1})}{\frac{1}{k-i+1} + \frac{k-i}{k-i+1}\gamma^{(-)}_{i-1}}(\varepsilon-\delta_\tau)\right]\\
    &=(1-\mathbb{E}[f])(1-\lambda_{k,i+1}) - \frac{(\lambda_{k,i}-\lambda_{k,i+1})}{\frac{1}{k-i+1} + \frac{k-i}{k-i+1}\gamma^{(-)}_{i-1}}\varepsilon + \left(\frac{(\lambda_{k,i}-\lambda_{k,i+1})}{\frac{1}{k-i+1} + \frac{k-i}{k-i+1}\gamma^{(-)}_{i-1}} - (1-\lambda_{k,i+1}) \right)\underset{\tau \sim \pi_{i-1}^f}{\mathbb{E}}\left[\delta_\tau \right]\\
    &\geq (1-\mathbb{E}[f])(1-\lambda_{k,i+1}) - \frac{(\lambda_{k,i}-\lambda_{k,i+1})}{\frac{1}{k-i+1} + \frac{k-i}{k-i+1}\gamma^{(-)}_{i-1}}\varepsilon
\end{align*}
where in the final inequality we have used the fact that
\[
\left(\frac{(\lambda_{k,i}-\lambda_{k,i+1})}{\frac{1}{k-i+1} + \frac{k-i}{k-i+1}\gamma^{(-)}_{i-1}} - (1-\lambda_{k,i+1}) \right) \geq 0.
\]
This in turn follows from noting that $\gamma^{(-)}_{i-1} \leq 0$, and
\begin{align*}
    (k-i+1)(\lambda_{k,i}-\lambda_{k,i+1}) &= (k-i+1)\left(\frac{1}{k-i} - \frac{1}{k-i+1}(1-\gamma_{i-1}) \right)\prod\limits_{j=i}^{k-2}(1-\gamma_j)\\
    &=\left(\frac{1}{k-i} +\gamma_{i-1} \right)\prod\limits_{j=i}^{k-2}(1-\gamma_j)\\
    &\geq \frac{1}{k-i}\prod\limits_{j=i}^{k-2}(1-\gamma_j)\\
    &= 1-\lambda_{k,i+1}.
\end{align*}
\end{proof}

\section{A Local-to-Global Small-Set Expansion Theorem}
In the previous section, we analyzed the structure of non-expanding functions with respect to the down-up walk. A second key operator of interest in application is the noise operator, where the analogous bounds on e.g. the cube lead to classical `small-set expansion' theorems. In this section, we show a `local-to-global' theorem for (global) small-set expansion: any complex whose links satisfy such a notion must itself be a global small-set expander. Later, we'll see this gives as an application the first small-set expansion theorem for (non-partite) Ramanujan complexes of \cite{lubotzky2005explicit}, a regime in which prior Fourier analytic techniques fail.
\subsection{Localizing the Noise Operator}
The first step in our proof is to generalize \Cref{lemma:local-to-global-expansion} to the noise operator. To this end, let $\mathcal{B}_{n}^p$ denote the standard mean $p$ binomial distribution on $n$ trials, and let $d_{TV}$ denote the \textit{Total Variation Distance}, that is the distance measure between distributions $D$ and $D'$ over universe $\Omega$ defined by:
\[
d_{TV}(D,D') \coloneqq \max_{E \subseteq \Omega}|D(E) - D'(E)| = \frac{1}{2}\sum\limits_{x\in \Omega} |D(x) - D'(x)|
\]
We first argue the noise operator can be localized up to an error term that  scales with the TV-distance between adjacent binomial distributions.
\begin{proposition}[Garland's Lemma for Noise Operators]\label{cor:noisy-garland1}
Let $(X,\Pi)$ be a weighted simplicial complex. Let $\rho \in [0,1]$ and $j,k \in \mathbb{N}$. The expansion of $T_\rho$ can be approximately localized over $j$-links:
\[
\left |\bar{\Phi}_{T^k_\rho}(f) - \mathbb{E}_{\tau \sim \pi_j^f}[\bar{\Phi}_{T^{k-j}_\rho|_\tau}(f|_\tau)]\right | \leq 2d_{TV}(\mathcal{B}^\rho_{k},\mathcal{B}^\rho_{k-j}).
\]
\end{proposition}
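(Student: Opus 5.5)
We expand $T^k_\rho$ as a binomial mixture of lower walks and then localize each lower walk separately using \Cref{lemma:local-to-global-expansion}. Index the mixture by the number $m$ of re-sampled vertices. By definition, $T^k_\rho = \sum_{m=0}^k \mathcal{B}^{1-\rho}_k(m) N_k^m$, where $\mathcal{B}^{1-\rho}_k(m)$ is the probability that exactly $m$ of $k$ vertices are re-sampled. Non-expansion is linear in the operator, so
\[
\bar{\Phi}_{T^k_\rho}(f) = \sum_{m=0}^k \mathcal{B}^{1-\rho}_k(m)\,\bar{\Phi}_{N_k^m}(f).
\]
Similarly, on each link $\tau\in X(j)$ we have $\bar{\Phi}_{T^{k-j}_\rho|_\tau}(f|_\tau) = \sum_{m=0}^{k-j} \mathcal{B}^{1-\rho}_{k-j}(m)\,\bar{\Phi}_{N^m_{k-j}|_\tau}(f|_\tau)$. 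Since $\mathcal{B}^{1-\rho}_n$ is the reflection of $\mathcal{B}^{\rho}_n$, TV-distances are unchanged and we can freely pass to the $\mathcal{B}^\rho$ notation in the statement.

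For every $m\le k-j$, \Cref{lemma:local-to-global-expansion} applies with the same $f$-dependent distribution $\pi^f_j$ for all $m$:
\[
\bar{\Phi}_{N_k^m}(f) = \mathbb{E}_{\tau\sim\pi^f_j}\bigl[\bar{\Phi}_{N^m_{k-j}|_\tau}(f|_\tau)\bigr].
\]
We then write
\[
\bar{\Phi}_{T^k_\rho}(f) = \sum_{m\le k-j}\mathcal{B}^{1-\rho}_k(m)\,\mathbb{E}_{\tau\sim\pi^f_j}\bigl[\bar{\Phi}_{N^m_{k-j}|_\tau}(f|_\tau)\bigr] + \sum_{m>k-j}\mathcal{B}^{1-\rho}_k(m)\,\bar{\Phi}_{N_k^m}(f).
\]
Swapping the finite sum with the expectation, the difference from $\mathbb{E}_{\tau\sim\pi^f_j}[\bar{\Phi}_{T^{k-j}_\rho|_\tau}(f|_\tau)]$ becomes
\[
\mathbb{E}_{\tau\sim\pi^f_j}\Bigl[\sum_{m\le k-j}\bigl(\mathcal{B}^{1-\rho}_k(m)-\mathcal{B}^{1-\rho}_{k-j}(m)\bigr)\,\bar{\Phi}_{N^m_{k-j}|_\tau}(f|_\tau)\Bigr] + \sum_{m>k-j}\mathcal{B}^{1-\rho}_k(m)\,\bar{\Phi}_{N_k^m}(f).
\]

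To bound this error, we use that each non-expansion is a Rayleigh quotient of a Markov operator, $\langle f,Mf\rangle/\langle f,f\rangle$, so it lies in $[-1,1]$; in fact it lies in $[0,1]$ for the PSD walks $N^m = U D$. Setting $\mathcal{B}_{k-j}(m)=0$ for $m>k-j$, the whole error is at most
\[
\sum_{m=0}^{k}\bigl|\mathcal{B}^{1-\rho}_k(m)-\mathcal{B}^{1-\rho}_{k-j}(m)\bigr| = 2\,d_{TV}\bigl(\mathcal{B}^{1-\rho}_k,\mathcal{B}^{1-\rho}_{k-j}\bigr) = 2\,d_{TV}\bigl(\mathcal{B}^{\rho}_k,\mathcal{B}^{\rho}_{k-j}\bigr).
\]

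The main point needing care is the bookkeeping at the cutoff $m>k-j$: these walks go below the level of the link and cannot be localized. The fix is to absorb them into the TV sum, since the local binomial places zero mass there. A smaller point is checking that localizing with a single $f$-dependent distribution $\pi^f_j$ is legitimate for every $m\le k-j$ simultaneously. This holds because $\pi^f_j$ depends only on $f$ and $j$, not on $m$.
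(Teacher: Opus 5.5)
Your term-by-term localization is sound, and it is in substance the paper's argument: the paper packages the reweighted, truncated sum as a global ``shifted'' operator and applies the same triangle inequality with $\bar{\Phi}_N\in[0,1]$. Indexing by the number $m$ of re-sampled vertices, applying Garland's lemma to each $N_k^m$ with $m\le k-j$, and absorbing $m>k-j$ into the $\ell_1$ sum correctly proves
\[
\Bigl|\bar{\Phi}_{T^k_\rho}(f)-\mathbb{E}_{\tau\sim\pi^f_j}[\bar{\Phi}_{T^{k-j}_\rho|_\tau}(f|_\tau)]\Bigr|\leq 2d_{TV}(\mathcal{B}^{1-\rho}_k,\mathcal{B}^{1-\rho}_{k-j}).
\]
The step that fails is your final equality. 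Total variation is invariant when one bijection is applied to both distributions. Here, though, the reflections are $m\mapsto k-m$ and $m\mapsto k-j-m$, which are different maps. Since $\mathcal{B}^{1-\rho}_k$ is the law of $k-X$ and $\mathcal{B}^{1-\rho}_{k-j}$ is the law of $k-j-Y$, for $X\sim\mathcal{B}^\rho_k$ and $Y\sim\mathcal{B}^\rho_{k-j}$, your quantity equals $d_{TV}(\mathcal{B}^\rho_k,\,j+\mathcal{B}^\rho_{k-j})$, not $d_{TV}(\mathcal{B}^\rho_k,\mathcal{B}^\rho_{k-j})$. Already for $k=j=1$ these are $1-\rho$ and $\rho$ respectively.

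This gap cannot be closed, because the proposition as literally stated is false. Here $\mathcal{B}^\rho$ has success probability $\rho$, as in the paper's expansion $T^k_\rho=\sum_i\mathcal{B}^\rho_k(i)N_k^{k-i}$. For a counterexample, take $\rho=0$, $k=2$, $j=1$, $X$ the complete complex on $n$ vertices, and $f$ the indicator of all edges through a fixed vertex $v_0$.
\begin{itemize}
\item Globally, $T^2_0=N^2_2$, so $\bar{\Phi}_{T^2_0}(f)=\mathbb{E}[f]=2/n$.
\item Locally, $\pi^f_1(v_0)=1/2$ and $f|_{v_0}\equiv 1$, so the localized average is at least $1/2$.
\item Yet $d_{TV}(\mathcal{B}^0_2,\mathcal{B}^0_1)=0$. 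By continuity the claim also fails for all small $\rho>0$.
\end{itemize}
The paper's proof has a matching off-by-$j$ slip. Its shifted operator $\sum_{i\le k-j}\mathcal{B}^\rho_{k-j}(i)N_k^{k-i}$ would localize to $\sum_i\mathcal{B}^\rho_{k-j}(i)N^{k-i}_{k-j}|_\tau$, not to $T^{k-j}_\rho|_\tau=\sum_i\mathcal{B}^\rho_{k-j}(i)N^{k-j-i}_{k-j}|_\tau$. Its terms with $i<j$ are not localizable at all.

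Aligning by re-sampled count, as you did, is the correct bookkeeping. Drop the reflection claim and state the conclusion with $\mathcal{B}^{1-\rho}$, or equivalently with $d_{TV}(\mathcal{B}^\rho_k,j+\mathcal{B}^\rho_{k-j})$. That bound still decays like $O(j/\sqrt{k})$ for fixed $\rho\in(0,1)$, which is the rate used afterwards. It degrades as $\rho\to 0$ rather than as $\rho\to 1$.
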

\begin{proof}
Recall that the noise operator is given by a binomially-distributed convex combination of lower walks:
    \[
T^k_\rho \coloneqq \sum\limits_{i=0}^k \mathcal{B}^\rho_k(i)N_k^{k-i},
\]
Naively, one might wish to apply \Cref{lemma:local-to-global-expansion} directly to move to the localized noise operators, but this has two issues. First, we can only apply \Cref{lemma:local-to-global-expansion} whenever $i \leq k-j$. Second even accounting for this fact, the binomial coefficients need to be shifted to dimension $k-j$ as well. To account for this, we introduce the \textit{shifted} noise operator:
    \[
T^{k,j\downarrow}_\rho \coloneqq \sum\limits_{i=0}^{k-j} \mathcal{B}_{k-j}^\rho(i) N_k^{k-i}.
\]
By \Cref{lemma:local-to-global-expansion}, the shifted noise operator localizes to the desired form:
\begin{align*}
    \bar{\Phi}_{T^{k,j\downarrow}_\rho}(f) &= \sum\limits_{i=0}^{k-j}  \mathcal{B}_{k-j}^\rho(i) \bar{\Phi}_{N_k^{k-i}}(f)\\
    &=\sum\limits_{i=0}^{k-j}  \mathcal{B}_{k-j}^\rho(i) \underset{\tau \sim \pi_j^f}{\mathbb{E}}[\bar{\Phi}_{N_{k-j}^{k-i}|_\tau}(f|_\tau)]\\
    &= \underset{\tau \sim \pi_j^f}{\mathbb{E}}[\bar{\Phi}_{T_\rho^{k-j}|_\tau}(f|_\tau)]
\end{align*}
where the final step is by linearity of inner products. Thus it is enough to show that
\[
|\bar{\Phi}_{T^{k}_\rho}(f) -\bar{\Phi}_{T^{k,j\downarrow}_\rho}(f)| \leq 2d_{TV}(\mathcal{B}_k^\rho,\mathcal{B}_{k-j}^\rho).
\]
This is immediate from the fact that $\bar{\Phi}_{N_k^i}(f) \in [0,1]$, since by the triangle inequality:
\begin{align*}
    |\bar{\Phi}_{T^{k}_\rho}(f) -\bar{\Phi}_{T^{k,j\downarrow}_\rho}(f)| &\leq \sum\limits_{i=0}^k |B_k^\rho(i) - B_{k-j}^\rho(i)|\bar{\Phi}_{N_k^{k-i}}(f)\\
    &\leq \sum\limits_{i=0}^k|B_k^\rho(i) - B_{k-j}^\rho(i)|\\
    &= 2d_{TV}(\mathcal{B}_k^\rho,\mathcal{B}_{k-j}^\rho)
\end{align*}
as desired.
\end{proof}
For fixed $j$, the additive error term in \Cref{cor:noisy-garland1} goes to $0$ at a rate of about $\frac{1}{\sqrt{k}}$. While this is sufficient for most use cases, it only recovers small-set expansion for sets of size at least $\text{poly}(k^{-1})$, while one might hope to show such a bound for sets of size up to $\text{exp}(-k)$. We show this is possible if the links are additionally good high dimensional expanders.
\begin{proposition}[Refined Garland's Lemma for Noise Operators on HDX]\label{cor:noisy-garland2}
Let $(X,\Pi)$ be a $k$-dimensional weighted simplicial complex such that every $j$-link is a $\gamma$-two-sided or partite one-sided HDX. Then for any $\rho \in [0,1-1/e]$ and $j \leq d$, the expansion of $T_\rho$ can be approximately localized over $j$-links:
\[
\bar{\Phi}_{T^k_\rho}(f) \leq \mathbb{E}_{\tau \sim \pi_j^f}[\bar{\Phi}_{T^{k-j}_{\rho}|_\tau}(f|_\tau)] + F_{\mathcal{B}_{k}^{\rho}}(j-1) + c_k\gamma
\]
where $c_k \leq 2^{O(k)}$ and $F_{\mathcal{B}_{k}^{\rho}}(\cdot)$ denotes the CDF of $\mathcal{B}_{k}^{\rho}$.
\end{proposition}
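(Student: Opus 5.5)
We follow the route sketched in the techniques section. Write $T^k_\rho = \sum_{i=0}^k \mathcal{B}^\rho_k(i) N_k^{k-i}$, where $i$ counts the vertices that are kept.

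\textbf{Step 1 (cutoff).} Split the sum at $i=j$. For $i\le j-1$, bound each $\bar{\Phi}_{N_k^{k-i}}(f)\le 1$; the total contribution is at most $F_{\mathcal{B}_k^\rho}(j-1)$.

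\textbf{Step 2 (localize).} For $i\ge j$ the walk $N_k^{k-i}$ passes through an $i$-face with $i\ge j$. Its length $k-i\le k-j$, so \Cref{lemma:local-to-global-expansion} applies and gives
\[
\sum_{i\ge j}\mathcal{B}^\rho_k(i)\,\bar{\Phi}_{N_k^{k-i}}(f)=\mathbb{E}_{\tau\sim\pi_j^f}\Big[\sum_{i=j}^k\mathcal{B}^\rho_k(i)\,\bar{\Phi}_{N^{k-i}_{k-j}|_\tau}(f|_\tau)\Big].
\]
It now suffices to show, for each fixed $\tau$ and $g=f|_\tau$, that
\[
\sum_{i=j}^k\mathcal{B}^\rho_k(i)\,\langle g,N^{k-i}_{k-j}|_\tau g\rangle\le\langle g,T^{k-j}_\rho|_\tau g\rangle+c_k\gamma\langle g,g\rangle.
\]

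\textbf{Step 3 (Fourier).} Since $X_\tau$ is a $\gamma$-two-sided or partite one-sided HDX, invoke the HDX Fourier machinery of \cite{dikstein2018boolean,bafna2021hypercontractivity,gur2021hypercontractivity}. It provides a decomposition $g=\sum_{\ell=0}^{k-j}g_\ell$ that is approximately orthogonal and approximately diagonalizes every lower walk, with eigenvalues $\lambda_\ell(N^{m}_{n})\approx\binom{n-\ell}{m}/\binom{n}{m}$ in the complete-complex form. Here $n=k-j$, and the error is $2^{O(k)}\gamma\langle g,g\rangle$ uniformly over all walks and levels. Because the coefficients $\mathcal{B}^\rho_k(i)$ sum to at most $1$, these errors add up to $c_k\gamma$.

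With $m=k-i$, it then remains to prove the scalar inequality that for each $\ell$,
\[
\sum_{i=j}^k\mathcal{B}^\rho_k(i)\,\frac{\binom{k-j-\ell}{k-i}}{\binom{k-j}{k-i}}\le \lambda_\ell(T^{k-j}_\rho)\approx\rho^\ell.
\]
Up to the same approximation, the right side is exactly the $\ell$-th eigenvalue of the local noise operator. The diagonal form then recombines to $\langle g,T^{k-j}_\rho|_\tau g\rangle$ plus error.

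\textbf{Step 4 (scalar inequality, the main obstacle).} The left side has a probabilistic reading. Choose $K\sim\mathcal{B}_k^\rho$ and a uniformly random $(k-K)$-subset $R$ of $[k-j]$. Conditioned on $K\ge j$, the left side is $\Pr[R$ avoids a fixed $\ell$-set$]$, which equals $\Pr[$all $\ell$ fixed coordinates are kept$]$. This needs care because the walk removes $k-i$ of the $k-j$ link vertices.

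Compare with a coupling. Take $k$ independent $\mathrm{Bern}(\rho)$ keep-indicators, of which the $k-j$ link coordinates form a uniformly placed subset. When $K\ge j$, a hypergeometric thinning yields that each fixed $\ell$-set is fully kept with probability at most $\rho^\ell$; the exact true-noise eigenvalue is $\rho^\ell$. Concretely, the left side is at most $\mathbb{E}\big[\binom{K'}{\ell}/\binom{k-j}{\ell}\big]$, where $K'=K-j$ is the number of kept link vertices. Since $K-j$ is stochastically dominated by $\mathcal{B}^\rho_{k-j}$ only after accounting for the shift, the inequality has to be verified directly. The tool is the binomial identity $\mathbb{E}\binom{K}{\ell}=\binom{k}{\ell}\rho^\ell$ together with the monotonicity $\binom{K-j}{\ell}/\binom{k-j}{\ell}\le\binom{K}{\ell}/\binom{k}{\ell}$, which holds because $(K-j-t)/(k-j-t)\le (K-t)/(k-t)$ for $K\le k$.

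I expect the restriction $\rho\le 1-1/e$ to enter only in controlling the HDX approximation constants, or in the comparison of the approximate eigenvalues to $\rho^\ell$, and I would check this last.
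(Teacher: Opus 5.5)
Your proposal is correct and follows the paper's proof step for step. You cut off the $i<j$ terms by $F_{\mathcal{B}_k^\rho}(j-1)$, localize the rest with \Cref{lemma:local-to-global-expansion}, and expand each link in the approximate basis of \Cref{thm:Fourier-Analysis}, reducing to \Cref{claim:binomial}. There your monotonicity $\binom{K-j}{\ell}/\binom{k-j}{\ell}\le\binom{K}{\ell}/\binom{k}{\ell}$ is exactly the paper's inequality $\binom{k}{K}\binom{K-j}{\ell}/\binom{k-j}{\ell}\le\binom{k-\ell}{K-\ell}$, and your identity $\mathbb{E}\binom{K}{\ell}=\binom{k}{\ell}\rho^\ell$ plays the role of its binomial-theorem step. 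The restriction $\rho\le 1-1/e$ is never used, neither in your proof nor the paper's. Also, contrary to your aside, $(K-j)^+$ is pointwise dominated by a $\mathcal{B}^\rho_{k-j}$ variable under the natural coupling, which gives the scalar bound in one line.
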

For constant $j$ and $\gamma$ sufficiently small, both error terms are inverse exponential in $k$. Such error is negligible since the noise operator \textit{always} has non-expansion at least inverse exponential in $k$ anyway.

The proof of \Cref{cor:noisy-garland2} combines the basic localization of \Cref{lemma:local-to-global-expansion} with the Fourier analytic machinery of \cite{dikstein2018boolean,bafna2021hypercontractivity,gur2021hypercontractivity}. To this end, we first introduce a basic version of the latter.
\begin{theorem}[Fourier Analysis on HDX {\cite{dikstein2018boolean,bafna2021hypercontractivity,gur2021hypercontractivity}}]\label{thm:Fourier-Analysis}
    Let $(X,\Pi)$ be a $k$-dimensional $\gamma$-two-sided or partite one-sided HDX. Then there exists a decomposition $f=\sum\limits_{i=0}^k f_i$ such that:
    \[
    \bar{\Phi}_{N^{k-i}_k}(f) = \sum\limits_{\ell=0}^i \frac{{k - \ell \choose i - \ell}}{{k \choose i}} \frac{\langle f_\ell, f_\ell \rangle}{\langle f, f \rangle} \pm c_k\gamma
    \]
    where $c_k \leq 2^{O(k)}$. Similarly:    
    \[
    \bar{\Phi}_{T_\rho^k}(f) = \sum\limits_{\ell=0}^k \rho^\ell \frac{\langle f_\ell, f_\ell \rangle}{\langle f, f \rangle} \pm c_k\gamma.
    \]
\end{theorem}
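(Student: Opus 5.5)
The plan is to follow the standard Dikstein--Dinur--Filmus--Harsha / BHKL route. Build the HDX-decomposition explicitly, show the lower walks act approximately diagonally on it, and then obtain the noise-operator statement from a binomial identity.

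\emph{Step 1 (decomposition).} Using the KO-decomposition, write $f=\sum_{\ell=0}^k f_\ell$ with $f_\ell = U^k_\ell h_\ell$ and $h_\ell \in \text{Ker}(D_\ell)$. In the two-sided case the key tool is the approximate commutation relation
\[
D_{m+1}U_m = \tfrac{1}{m+1}I + \tfrac{m}{m+1}U_{m-1}D_m + E_m, \qquad \norm{E_m} \leq \gamma .
\]
This follows from \Cref{eq:gar-3} together with the identity $M^+_m = \frac{m+1}{m}D_{m+1}U_m - \frac{1}{m}I$. Garland's method localizes $\langle M_m^+ g,g\rangle - \langle U_{m-1}D_m g,g\rangle$ to the non-trivial parts of the link adjacency matrices $A_\tau$, whose spectrum lies in $[-\gamma,\gamma]$. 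In the partite one-sided case I would instead use the colorful/partite decomposition of GLL and BHKL. There the analogous swap holds with only one-sided error, because the relevant link operators are bipartite between color classes.

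\emph{Step 2 (approximate eigenvalues).} Iterating the swap relation and using $D_\ell h_\ell=0$ gives
\[
D^k_i U^k_\ell h_\ell = \frac{\binom{k-\ell}{k-i}}{\binom{k}{\ell}}\binom{i}{\ell} U^i_\ell h_\ell + \text{err}, \qquad \norm{U^k_\ell h_\ell}^2 \approx \binom{k}{\ell}^{-1}\norm{h_\ell}^2 .
\]
In each relation the error is at most $2^{O(k)}\gamma\norm{h_\ell}^2$, since there are at most $k^2$ swaps with binomial bookkeeping. From the same computations one gets approximate orthogonality $|\langle f_\ell,f_m\rangle| \leq 2^{O(k)}\gamma\norm{h_\ell}\norm{h_m}$ for $\ell\neq m$. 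Combining these yields
\[
\langle f, N_k^{k-i}f\rangle = \norm{D^k_i f}^2 \approx \sum_{\ell\le i}\frac{\binom{k-\ell}{i-\ell}}{\binom{k}{i}}\langle f_\ell,f_\ell\rangle,
\]
which is the Johnson-scheme eigenvalue. Levels with $\ell>i$ vanish because $f_\ell\in\text{Ker}(D^k_{\ell-1})\subseteq \text{Ker}(D^k_i)$.

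\emph{Step 3 (normalization; main obstacle).} To turn additive errors in terms of $\norm{h_\ell}^2$ into the claimed $\pm c_k\gamma$ after dividing by $\langle f,f\rangle$, I need $\sum_\ell \binom{k}{\ell}^{-1}\norm{h_\ell}^2 \leq (1+2^{O(k)}\gamma)\langle f,f\rangle$. This follows from near-orthogonality once $\gamma \leq 2^{-\Omega(k)}$. For larger $\gamma$ the statement is trivial after enlarging $c_k$, since both sides lie in $[0,1]$. This step is where the $2^{O(k)}$ loss enters, and I expect it to be the delicate part. The partite one-sided case needs the most care, because one must verify that one-sided link bounds suffice for the colorful swap.

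\emph{Step 4 (noise operator).} By linearity,
\[
\bar{\Phi}_{T^k_\rho}(f) = \sum_i \binom{k}{i}\rho^i(1-\rho)^{k-i}\,\bar{\Phi}_{N_k^{k-i}}(f).
\]
Substituting Step 2, the coefficient of $\langle f_\ell,f_\ell\rangle/\langle f,f\rangle$ is
\[
\sum_i \binom{k-\ell}{i-\ell}\rho^i(1-\rho)^{k-i} = \rho^\ell .
\]
The errors sum under the convex combination to at most $c_k\gamma$.
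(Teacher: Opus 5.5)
Your two-sided argument is the DDFH/BHKL swap-relation route. That is exactly what the paper defers to: it says the two-sided case is proven explicitly in BHKL, and it proves only the partite one-sided case itself. Your Step 4 (linearity plus $\sum_i\binom{k-\ell}{i-\ell}\rho^i(1-\rho)^{k-i}=\rho^\ell$) is fine.

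There are two slips in Step 2.
\begin{enumerate}
\item $f_\ell=U^k_\ell h_\ell$ with $h_\ell\in\mathrm{Ker}(D_\ell)$ is the non-orthogonal DDFH decomposition, not the KO decomposition. So $f_\ell\in\mathrm{Ker}(D^k_{\ell-1})$ holds only approximately, and the vanishing of levels $\ell>i$ has to come from your coefficient formula up to error.
\item That coefficient should be $\binom{i}{\ell}/\binom{k}{\ell}=\prod_{m=i+1}^{k}\frac{m-\ell}{m}$. Yours carries an extra factor $\binom{k-\ell}{i-\ell}$; at $\ell=0$ it gives $\binom{k}{i}$ instead of $1$. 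Your final formula for $\norm{D^k_if}^2$ is nonetheless correct.
\end{enumerate}
Also, the claim that ``both sides lie in $[0,1]$'' for large $\gamma$ needs an orthogonal decomposition. Since the statement is existential, you can simply use the KO decomposition in that regime.

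\textbf{The gap.} The genuine gap is the partite one-sided case, which is the case the paper actually proves. There is no ``analogous swap'' that can be fed into your Steps 2--3.
\begin{itemize}
\item A partite complex has multipartite links, so $\gamma^{(-)}$ is far from $0$. For instance, the link of a $(k-2)$-face is bipartite, with $\lambda_{\min}=-1$.
\item Consequently $D_kU_{k-1}-\frac{1}{k}I-\frac{k-1}{k}U_{k-2}D_{k-1}$ has norm of constant order, and the Johnson-scheme bookkeeping for $D^k_iU^k_\ell h_\ell$ does not transfer.
\item Pointing to ``the colorful decomposition of GLL'' without saying how $N_k^{k-i}$ acts on it leaves this case unproved.
\end{itemize}

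\textbf{The missing idea} is the colour-indexed structure the paper uses. Let $E_T$ be the conditional expectation given the colours in $T$, and set $f^{=S}=\sum_{T\subseteq S}(-1)^{|S\setminus T|}E_Tf$. By M\"obius inversion, $E_Tf=\sum_{S\subseteq T}f^{=S}$. One then has
\[
N_k^{k-i}=\binom{k}{i}^{-1}\sum_{|T|=i}E_T, \qquad T_\rho=\sum_{T}\rho^{|T|}(1-\rho)^{k-|T|}E_T.
\]
These give exactly
\[
N_k^{k-i}f=\sum_S\frac{\binom{k-|S|}{i-|S|}}{\binom{k}{i}}f^{=S}, \qquad T_\rho f=\sum_S\rho^{|S|}f^{=S}.
\]
The only approximate input is GLL's theorem for $\gamma$-products: the $f^{=S}$ are nearly orthogonal, $|\langle f^{=S},f^{=S'}\rangle|\le 2^{O(|S|+|S'|)}\gamma\norm{f}^2$ for $S\neq S'$, and are approximate eigenvectors of each $E_T$. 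This is combined with Dikstein--Dinur's result that a partite one-sided $\gamma$-HDX is a $\frac{\gamma}{1-(d-2)\gamma}$-product.

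This is precisely where one-sidedness suffices, because the coloured swap walks are bipartite. Since the error bound is relative to $\norm{f}^2$, your Step 3 normalization issue does not arise in this case.
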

We note that \Cref{thm:Fourier-Analysis} does not appear anywhere in the literature, but follows without too much difficulty from techniques developed in \cite{dikstein2018boolean,bafna2021hypercontractivity,gur2021hypercontractivity}. We give the proof for completeness in \Cref{app:basis}. We can now prove \Cref{cor:noisy-garland2}.
\begin{proof}[Proof of \Cref{cor:noisy-garland2}]
    At a high level, the proof is similar to \Cref{cor:noisy-garland1}, but we will separate the `shifting' of $T_\rho$ into two parts. The first, which will introduce our additive error, is to truncate the terms for $i \leq j$ which cannot be localized via \Cref{lemma:local-to-global-expansion}:
    \begin{align*}
    \bar{\Phi}_{T^k_\rho}(f) &= \sum\limits_{i=j}^{k} \mathcal{B}_{k}^\rho(i)\bar{\Phi}_{N^{k-i}_k}(f)  + \sum\limits_{i=0}^{j-1} \mathcal{B}_{k}^\rho(i)\bar{\Phi}_{N^{k-i}_k}(f)\\
    &\leq \sum\limits_{i=j}^{k} \mathcal{B}_{k}^\rho(i)\bar{\Phi}_{N^{k-i}_k}(f) + F_{\mathcal{B}_k^\rho}(j-1)
    \end{align*}
    once again using the fact that $\bar{\Phi}_{N_k^j} \in [0,1]$. We can apply now \Cref{lemma:local-to-global-expansion} to the left-hand term directly:
    \[
    \sum\limits_{i=j}^{k} \mathcal{B}_{k}^\rho(i)\bar{\Phi}_{N^{k-i}_k}(f) = \underset{\tau \sim \pi^f_j}{\mathbb{E}}\left[\sum\limits_{i=j}^{k} \mathcal{B}_{k}^\rho(i)\bar{\Phi}_{N^{k-i}_{k-j}|_\tau}(f|_\tau)\right].
    \]
    The second step is to give a finer-grained analysis of the inner expectation for each $\tau \in X(j)$ (rather than simply shifting $\mathcal{B}_k^\rho$ and bounding by TV-distance). In particular, we'll argue that for every $\tau \in X(j)$:
\begin{equation*}
    \sum\limits_{i=j}^{k} \mathcal{B}_{k}^\rho(i)\bar{\Phi}_{N^{k-i}_{k-j}|_\tau}(f|_\tau) \leq \bar{\Phi}_{T_{\rho}^{k-j}|_\tau}(f|_\tau) +  c_k\gamma
\end{equation*}
which completes the proof. To see this, first expand the LHS in terms of its Fourier basis:
\begin{align*}
\sum\limits_{i=j}^{k} \mathcal{B}_{k}^\rho(i)\bar{\Phi}_{N^{k-i}_{k-j}|_\tau}(f|_\tau) &\leq \frac{1}{\langle f|_\tau, f|_\tau \rangle}\sum\limits_{i=0}^{k-j} \mathcal{B}_{k}^\rho(i+j)\sum\limits_{\ell=0}^i \frac{{k-j-\ell\choose i -\ell}}{{k  - j \choose i}}\langle (f|_\tau)_\ell,(f|_\tau)_\ell\rangle\\
&= \frac{1}{\langle f|_\tau, f|_\tau \rangle}\sum\limits_{\ell=0}^{k-j}\left(\sum_{i=\ell}^{k-j}{k \choose i+j}\frac{{k-j-\ell \choose i-\ell}}{{k-j \choose i}}\rho^{i+j}(1-\rho)^{k-j-i}\right)\langle (f|_\tau)_\ell,(f|_\tau)_\ell\rangle.
\end{align*}
The key is now to observe that the inner sum can be bounded by the (approximate) eigenvalues of $T_\rho$, namely that the following binomial inequality holds.
\begin{claim}\label{claim:binomial}
For all $k \in \mathbb{N}$, $j \leq k$, and $\ell \leq k-j$:
    \[
    \sum_{i=\ell}^{k-j}{k \choose i+j}\frac{{k-j-\ell \choose i-\ell}}{{k-j \choose i}}\rho^{i+j}(1-\rho)^{k-j-i} \leq \rho^\ell.
    \]
\end{claim}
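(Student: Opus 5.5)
The plan is to turn the left-hand side into a binomial moment and compare it to the standard factorial-moment identity $\mathbb{E}_{m\sim\mathcal{B}^\rho_k}\left[{m \choose \ell}\right]={k \choose \ell}\rho^\ell$.

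\emph{Step 1: rewrite the ratio of binomials.} Expanding in factorials gives
\[
\frac{{k-j-\ell \choose i-\ell}}{{k-j \choose i}}=\frac{(k-j-\ell)!\,i!}{(i-\ell)!\,(k-j)!}=\frac{{i \choose \ell}}{{k-j \choose \ell}}.
\]
So the left-hand side of \Cref{claim:binomial} equals
\[
\frac{1}{{k-j \choose \ell}}\sum_{i=\ell}^{k-j}{k \choose i+j}\rho^{i+j}(1-\rho)^{k-(i+j)}{i \choose \ell}.
\]

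\emph{Step 2: reindex as an expectation.} Substitute $m=i+j$. The summand then becomes $\mathcal{B}^\rho_k(m)\,{m-j \choose \ell}$, summed over $j+\ell\le m\le k$. Since ${m-j \choose \ell}=0$ whenever $j\le m<j+\ell$, and we may simply drop the terms with $m<j$, the left-hand side is at most
\[
\frac{1}{{k-j \choose \ell}}\,\mathbb{E}_{m\sim\mathcal{B}^\rho_k}\left[{m-j \choose \ell}\mathbf{1}_{m\ge j+\ell}\right].
\]

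\emph{Step 3: the key comparison.} This is the one step requiring an idea. I claim that for $j+\ell\le m\le k$,
\[
{m-j \choose \ell}\le {m \choose \ell}\cdot\frac{{k-j \choose \ell}}{{k \choose \ell}}.
\]
To see this, write
\[
\frac{{m-j \choose \ell}}{{m \choose \ell}}=\prod_{t=0}^{\ell-1}\left(1-\frac{j}{m-t}\right).
\]
Each factor is nonnegative because $m-t\ge m-\ell+1>j$ when $m\ge j+\ell$. Each factor is also nondecreasing in $m$. Hence the product is maximized at $m=k$, which gives the claimed inequality.

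\emph{Step 4: conclude.} Plugging Step 3 into Step 2, and bounding the indicator by $1$ (all terms are nonnegative), the left-hand side is at most
\[
\frac{1}{{k \choose \ell}}\,\mathbb{E}_{m\sim\mathcal{B}^\rho_k}\left[{m \choose \ell}\right].
\]
By the factorial-moment identity this equals $\rho^\ell$. That identity holds because ${m \choose \ell}$ counts $\ell$-subsets of the successful trials, each of which is all-successful with probability $\rho^\ell$.

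The main obstacle is spotting the monotonicity in Step 3. Once the sum is written as a factorial moment of a shifted binomial, the rest is routine. The hypothesis $\rho\le 1-1/e$ is not needed for this claim.
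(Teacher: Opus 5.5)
Your proof is correct and is essentially the paper's argument. Your Step 3 bound, combined with $\binom{k}{m}\binom{m}{\ell}=\binom{k}{\ell}\binom{k-\ell}{m-\ell}$, is exactly the paper's termwise bound of the coefficient by $\binom{k-\ell}{m-\ell}$, justified by monotonicity in $m$ of the same ratio $\binom{m-j}{\ell}/\binom{m}{\ell}=\binom{m-\ell}{j}/\binom{m}{j}$, and your factorial-moment identity is the paper's binomial-theorem step (your product form $\prod_{t}\bigl(1-\frac{j}{m-t}\bigr)$ usefully proves the monotonicity the paper only asserts).
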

Then re-applying \Cref{thm:Fourier-Analysis} we'd have:
\begin{align*}
    \sum\limits_{i=j}^{k} \mathcal{B}_{k}^\rho(i)\bar{\Phi}_{N^{k-i}_{k-j}|_\tau}(f|_\tau) &\leq \frac{1}{\langle f|_\tau, f|_\tau \rangle}\sum\limits_{\ell=0}^{k-j}\rho^\ell \langle(f|_\tau)_\ell,(f|_\tau)_\ell\rangle\\
&\leq \bar{\Phi}_{T_\rho^{k-j}|_\tau}(f|_\tau) + c_k\gamma
\end{align*}
as desired.

We leave the proof of \Cref{claim:binomial}, which follows from fairly elementary binomial manipulation, to \Cref{app:basis}.
\end{proof}
\subsection{The Small-Set Expansion Theorem}
We now give the general statement of our local-to-global small-set expansion theorem. We call a complex $j$-locally SSE if its links satisfy an SSE Theorem for strongly global functions.
\begin{definition}[Locally SSE Complexes]
    Let $(X,\Pi)$ be a simplicial complex. We say $(X,\Pi)$ is $j$-locally $\phi$-SSE if for every $i \leq d-j$, $\varepsilon>0$, $j$-link $X_\tau$, and $(\varepsilon,i-j)$-strongly global function $f \in C_{d-j}(X_\tau)$ the expansion of $h$ is at least:
\[
\Phi_{T_{\rho}^{d-j}|_\tau}(f|_\tau) \geq \phi(\varepsilon,i)
\]
\end{definition}
It is an almost immediate corollary of \Cref{cor:noisy-garland1,cor:noisy-garland2} that global sets on locally SSE complexes expand.
\begin{corollary}
    \label{thm:local-to-global}
Let $(X,\Pi)$ be a $j$-locally $\phi$-SSE simplicial complex. Then the expansion of any $(\varepsilon,i)$-strongly global function $f \in C_k$ of density at most $\varepsilon$ satisfies:
\[
\Phi_{T^k_\rho}(f) \geq \phi(\varepsilon,i) - 2d_{TV}(\mathcal{B}^\rho_{k},\mathcal{B}^\rho_{k-j}).
\]
Moreover, if the $j$-links are $\gamma$-two-sided or partite one-sided HDX then
\[
\Phi_{T^k_\rho}(f) \geq \phi(\varepsilon,i) - F_{\mathcal{B}_{k}^{\rho}}(j-1) - c_k\gamma
\]
where $c_k \leq 2^{O(k)}$. 
\end{corollary}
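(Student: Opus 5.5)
The plan is to pass to non-expansion and feed the localized bound from \Cref{cor:noisy-garland1} (respectively \Cref{cor:noisy-garland2}) into the local SSE hypothesis. Since $\Phi = 1-\bar{\Phi}$ and $\pi_j^f$ is a probability distribution, \Cref{cor:noisy-garland1} gives
\[
\Phi_{T^k_\rho}(f) \geq \underset{\tau \sim \pi_j^f}{\mathbb{E}}\left[\Phi_{T^{k-j}_\rho|_\tau}(f|_\tau)\right] - 2d_{TV}(\mathcal{B}^\rho_{k},\mathcal{B}^\rho_{k-j}).
\]
In the HDX case, \Cref{cor:noisy-garland2} gives the analogous bound with error $F_{\mathcal{B}_k^\rho}(j-1)+c_k\gamma$ in place of the TV term. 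It therefore suffices to show that every $\tau$ in the support of $\pi_j^f$ satisfies $\Phi_{T^{k-j}_\rho|_\tau}(f|_\tau) \geq \phi(\varepsilon,i)$. Averaging over $\tau \sim \pi_j^f$ then preserves this lower bound, because it holds pointwise.

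To get the pointwise bound, we verify that each localization $f|_\tau$ with $\tau \in X(j)$ is strongly global at the right level inside the link, namely $(\varepsilon',i-j)$-strongly global for the link complex $X_\tau$. Take any $\sigma \in X_\tau(i-j)$. Then $\tau\cup\sigma \in X(i)$, and the link of $\sigma$ inside $X_\tau$ is exactly $X_{\tau\cup\sigma}$ with its induced distribution, so
\[
\underset{(X_\tau)_\sigma}{\mathbb{E}}[(f|_\tau)_\sigma] = \underset{X_{\tau\cup\sigma}}{\mathbb{E}}[f_{\tau\cup\sigma}] \leq \varepsilon.
\]
This follows directly from $(\varepsilon,i)$-strong globalness of $f$; note we need strong globalness here, not ordinary globalness, and that no factor-$2$ loss arises. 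The $j$-local $\phi$-SSE hypothesis then applies to $f|_\tau$ on $X_\tau$ and gives $\Phi(f|_\tau)\geq \phi(\varepsilon,i)$. The density assumption $\mathbb{E}[f]\le\varepsilon$ only matters if the local SSE statement is used for links with $i=j$. In that degenerate case we are working in $j$-links with zero restriction, and the bound needed is $\mathbb{E}[f|_\tau]\le \varepsilon$, which again comes from strong globalness, since $j\le i$ means every $j$-face lies in some $i$-face.

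I expect the main point needing care to be the matching of dimensions and indices. The definition of locally SSE is phrased for $C_{d-j}(X_\tau)$, whereas here $f\in C_k$. I will either read the definition for general level $k$, or treat $X$ as truncated to dimension $k$, since the noise operator on $k$-faces only sees the $k$-skeleton. This uses that links of the $k$-skeleton are the $(k-j)$-skeletons of links. One small technicality remains: $\pi_j^f$ may place zero mass on some links, but those links do not affect the average. With these identifications, the two displayed inequalities combine immediately to give both claims of \Cref{thm:local-to-global}.
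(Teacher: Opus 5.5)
Your proposal is correct and matches the paper's argument. You rewrite \Cref{cor:noisy-garland1} (resp.\ \Cref{cor:noisy-garland2}) as a lower bound of $\mathbb{E}_{\tau\sim\pi_j^f}[\Phi_{T^{k-j}_\rho|_\tau}(f|_\tau)]$ minus the error term, then bound each summand by $\phi(\varepsilon,i)$, using that localizations to $j$-links are $(\varepsilon,i-j)$-strongly global.

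Two minor remarks. The density hypothesis is redundant, since $\mathbb{E}[f]=\mathbb{E}_{\tau\sim\pi_i}[\mathbb{E}[f|_\tau]]\le\varepsilon$. Also, truncating to the $k$-skeleton does not by itself give the local SSE hypothesis at level $k-j$; the ``read the definition at level $k$'' option is the one the paper implicitly uses.
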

\begin{proof}
This is essentially immediate from combining \Cref{cor:noisy-garland1,cor:noisy-garland2} with the observation that any $(i,\varepsilon)$-strongly global function is $(i-j,\varepsilon)$-strongly global upon localization to any $j$-link. We therefore have:
\begin{align*}
    \Phi_{T^k_\rho}(f) \geq \underset{\tau \sim \pi^f_{j}}{\mathbb{E}}[\Phi_{{T^{k-j}_\rho}|_\tau}(f|_\tau)] - 2d_{TV}(\mathcal{B}^\rho_{k},\mathcal{B}^\rho_{k-j}) \geq \phi(\varepsilon,i)- 2d_{TV}(\mathcal{B}^\rho_{k},\mathcal{B}^\rho_{k-j})
\end{align*}
in the general case and
\[
\Phi_{T^k_\rho}(f) \geq \underset{\tau \sim \pi^f_{j}}{\mathbb{E}}[\Phi_{{T^{k-j}_\rho}|_\tau}(f|_\tau)] - F_{\mathcal{B}_{k}^{\rho}}(j-1) - c_k\gamma \geq \phi(\varepsilon,i) - F_{\mathcal{B}_{k}^{\rho}}(j-1) - c_k\gamma
\]
when the $j$-links are $\gamma$-HDX as desired.
\end{proof}


\section{Applications}\label{sec:app}
In this section we give applications of our framework to the behavior of non-expanding sets on several families of well-studied complexes. In particular, we give a new characterization of low influence functions on clique and product complexes, and a small-set expansion theorem for the seminal Ramanujan complexes of \cite{lubotzky2005explicit}.
\subsection{Low Influence Functions on Weak HDX}
Bafna, Hopkins, Kaufman, and Lovett \cite{bafna2020high,bafna2021hypercontractivity}, and independently Gur, Lifshitz, and Liu \cite{gur2021hypercontractivity} characterized the structure of non-expanding sets on near-perfect local-spectral expanders. Unfortunately, very few families of HDX are known to satisfy such strong requirements, and those that do are highly algebraic in nature. Applying our framework for weak expansion, we show such characterizations also holds on more `everyday' objects, albeit in a quantitatively weaker sense. To illustrate this fact, we'll first take a closer look at a classical combinatorial setting: clique-complexes.
\begin{definition}[Clique-complex]
Given a graph $G=(V,E)$, the $k$-dimensional clique-complex $K_{G,k}$ is the simplicial complex induced by the uniform distribution over $k$-cliques of $G$.
\end{definition}
We show global sets expand on clique-complexes of dense graphs.
\begin{theorem}[Expansion in Clique-complexes]\label{thm:clique}
Fix $k \in \mathbb{N}$ and let $G=(V,E)$ be any graph with minimum degree at least $\Delta_{\text{min}} \geq \frac{k-2}{k-1}|V| + \frac{k||A_{\bar{G}}||-1}{k-1}$, where $A_{\bar{G}}$ denotes the un-normalized adjacency matrix of $G$'s complement. Then the expansion of any boolean $(\varepsilon,i)$-global function $f$ is at least:
\[
\Phi(f) \geq (1-\mathbb{E}[f])\frac{i+1}{k(k-i)} - \frac{5}{4}\varepsilon.
\]
\end{theorem}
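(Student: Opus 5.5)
The plan is to apply \Cref{thm:expand} directly to $X=K_{G,k}$. This requires bounding the local spectral parameters $\gamma_j$ and $\gamma^{(-)}_{j}$ of the clique complex under the stated minimum-degree hypothesis, and then simplifying the resulting expression.

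\textbf{Step 1: links are dense graphs.} For $\tau \in X(j)$, the link $X_\tau$ is (up to weighting) the clique complex of the common neighborhood $V_\tau$ of $\tau$. Its underlying graph has vertex set $V_\tau$, and an edge $\{v,w\}$ is weighted by the number of $k$-cliques containing $\tau\cup\{v,w\}$. Each vertex of $\tau$ has at most $|V|-1-\Delta_{\min}$ non-neighbors, so every common neighbor of $j+1$ vertices still has many neighbors inside $V_\tau$. The degree condition $\Delta_{\min}\ge \frac{k-2}{k-1}|V|+\frac{k\|A_{\bar G}\|-1}{k-1}$ is designed so that, even in the top links $j=k-2$, each vertex misses only a small fraction of $V_\tau$.

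\textbf{Step 2: bound $\gamma_{k-2}$ and $\gamma^{(-)}_{k-2}$.} In a $(k-2)$-link, the underlying graph is exactly $G[V_\tau]$ with uniform edge weights. I would write its adjacency matrix as $J-I-A_{\bar G}[V_\tau]$ and normalize by the degrees. The $J$ part contributes only to the trivial eigenvalue. The remaining part has spectral norm at most $(1+\|A_{\bar G}\|)/\deg_{\min}(G[V_\tau])$, so both $\gamma_{k-2}$ and $|\gamma^{(-)}_{k-2}|$ are at most roughly $(1+\|A_{\bar G}\|)/(|V_\tau|-\|A_{\bar G}\|)$. The hypothesis should turn this into $\gamma_{k-2}\le \frac{1}{k-1}$ and, more precisely, $\gamma_{k-2}\le\frac{1}{k}$-ish in the form needed below, with $|\gamma^{(-)}_{k-2}|$ similarly small. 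The irregularity of degrees (the normalization $D^{-1/2}AD^{-1/2}$) has to be handled, and I expect this to be the fiddliest part. I would use a perturbation argument comparing the normalized matrix to the degree-regular one, absorbing the slack into the $-1$ term in the numerator of the hypothesis.

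\textbf{Step 3: trickle down and evaluate the product.} By \Cref{thm:trickling}, with $\gamma_{k-2}\le \frac{1}{k}$ we get $\gamma_j\le \frac{1/k}{1-(k-2-j)/k}=\frac{1}{j+2}$. Then
\[\prod_{j=i}^{k-2}(1-\gamma_j)\ \ge\ \prod_{j=i}^{k-2}\frac{j+1}{j+2}=\frac{i+1}{k},\]
which gives the main term $(1-\mathbb{E}[f])\frac{i+1}{k(k-i)}$.

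\textbf{Step 4: bound the constant.} For $c_{k,i,\gamma}$, the negative side similarly trickles down to $\gamma^{(-)}_{i-1}\ge -\frac{c}{k}$ with $c$ small, so $1+(k-i)\gamma^{(-)}_{i-1}$ is bounded below by a constant close to $1$ (I aim for at least $4/5$). For the bracket, I would use the identity from the proof of \Cref{thm:expand}:
\[(k-i+1)(\lambda_{k,i}-\lambda_{k,i+1})=\left(\tfrac{1}{k-i}+\gamma_{i-1}\right)\prod_{j\ge i}(1-\gamma_j)\ \le\ \tfrac{1}{k-i}+\tfrac{1}{i+1}\ \le\ 1\]
for $1\le i\le k-1$. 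Together these give $c_{k,i,\gamma}\le 5/4$.

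The main obstacle is Step 2: extracting clean bounds on the top-link spectra from the degree and $\|A_{\bar G}\|$ hypothesis in a form sharp enough that the product telescopes to exactly $\frac{i+1}{k}$ and the negative side yields the constant $5/4$.
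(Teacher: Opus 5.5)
Your route is the paper's: bound the spectra of the $(k-2)$-links, trickle down, and substitute into \Cref{thm:expand}. Your Step 3 is exactly the computation the paper leaves implicit. Step 2, however, does not give $\gamma_{k-2}\le\frac1k$ as written. Under the hypothesis, every vertex of a $(k-2)$-link $G[V_\tau]$ has degree at least $|V|-(k-1)(|V|-\Delta_{\min})\ge k\|A_{\bar G}\|-1$. A numerator of $1+\|A_{\bar G}\|$ therefore yields only $\frac{1+\|A_{\bar G}\|}{k\|A_{\bar G}\|-1}>\frac1k$, and the product no longer telescopes to $\frac{i+1}{k}$. The fix is sign-aware and needs no comparison with a regular graph. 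Set $B=A_{\bar G}[V_\tau]$. Since $D^{-1/2}JD^{-1/2}$ is rank-one PSD, Weyl's inequality gives $\lambda_2\bigl(D^{-1/2}(J-I-B)D^{-1/2}\bigr)\le\lambda_1\bigl(D^{-1/2}(-I-B)D^{-1/2}\bigr)\le\frac{\max\{0,\,|\lambda_{\min}(B)|-1\}}{\deg_{\min}}$, and interlacing gives $|\lambda_{\min}(B)|\le\|A_{\bar G}\|$. This is the Alev--Lau bound the paper cites, and it yields $\gamma_{k-2}\le\frac{\|A_{\bar G}\|-1}{k\|A_{\bar G}\|-1}\le\frac1k$.

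The \emph{genuine gap} is the negative side: the ``similarly small'' in Step 2 and the target $\frac45$ in Step 4. For $\lambda_{\min}$, the $-I$ term works against you and the rank-one $J$ term gives no guaranteed help. This argument therefore yields only $|\gamma^{(-)}_{k-2}|\le\frac{1+\|A_{\bar G}\|}{k\|A_{\bar G}\|-1}$, and that bound is tight. Take $k=3$ and $G=K_{2,2,2}$ (the octahedron). Here $\Delta_{\min}=4$ and $\|A_{\bar G}\|=1$, so the hypothesis holds with equality. Yet the vertex links are $4$-cycles and the normalized $1$-skeleton has least eigenvalue $-\frac12$, so $\gamma^{(-)}_1=-1$ and $\gamma^{(-)}_0=-\frac12$. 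Then $1+(k-1)\gamma^{(-)}_0=0$ and $1+\gamma^{(-)}_1=0$, so the bounds on $c_{3,1,\gamma}$ and $c_{3,2,\gamma}$ in \Cref{thm:expand} are infinite. Hence nothing in the hypothesis keeps $1+(k-i)\gamma^{(-)}_{i-1}$ away from $0$, and no arithmetic on top of \Cref{thm:expand} produces $\frac54$.

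You cannot patch this with the paper's assertion $\gamma^{(-)}_{k-2}\ge-\frac1k$ either. Its derivation subtracts $\lambda_1(D_S^{-1}J)$, which is the wrong direction of Weyl's inequality, and the octahedron violates the claim. Even granting it, trickle-down gives only $1+(k-i)\gamma^{(-)}_{i-1}\ge\frac{k-1}{2k-1-i}$, which is $\frac12$ at $i=1$.

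Your bracket estimate $\frac1{k-i}+\frac1{i+1}\le1$ also fails at $i=k-1$. That case is easier directly: $D_kf\le\mathbb{E}[f]+\varepsilon$ pointwise gives $\Phi(f)\ge1-\mathbb{E}[f]-\varepsilon$.

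What is missing is a way to control the level-$1$ term that does not pass through $\lambda_{\min}$. In the octahedron (which is the cube $\{0,1\}^3$), using $\lambda^{\cancel{0}}_{\min}(\widehat{N}^{2}_1)=\frac13$ in \Cref{thm:i-inequality} instead of $\lambda_{\min}=0$ recovers $\Phi(f)\ge\frac{1-\mathbb{E}[f]}{2}-\frac{\varepsilon}{2}$ at $i=1$. So the obstruction lies in the route, not necessarily the statement, but you would need an argument of that kind in general.
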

\begin{proof}
It will be convenient to phrase the result in the equivalent setup of $k$-size independent sets, denoted $I_{G,k}$. In particular, it is enough to prove the result holds over $I_{G,k}$ so long as
\begin{equation}\label{eq:degree}
    \Delta_{\text{max}} \leq \frac{|V|-k||A_G||+1}{k-1}
\end{equation}
as we can then apply this to the independent set complex of the complement of $G$ (which is exactly $K_{G,k}$).

In their celebrated work on the spectral gap of high order random walks, Alev and Lau \cite{alev2020improved} showed that $I_{G,k}$ satisfies $\gamma_{k-2} \leq \frac{1}{k}$ as long as\footnote{This is not exactly the statement given in \cite[Lemma 4.3]{alev2020improved}, but it is immediate from the proof solving for dependence on $\Delta_{\text{max}}$ instead of $k$.} 
\[
\Delta_{\text{max}} \leq \frac{|V|-k|\lambda_{min}(A_G)|+1}{k-1}.
\]
Note that this always holds for any graph satisfying \Cref{eq:degree}.

Thus to apply our characterization theorem, it is enough to bound the negative local spectra. We will do this by proving that under \Cref{eq:degree}, the co-dimension $2$ links of $I_{G,k}$ also satisfy $\gamma^{(-)}_{k-2} \geq -\frac{1}{k}$. Then by Oppenheim's Trickling-Down Theorem we have 
\[
\gamma_{i}^{(-)} \geq -\frac{1}{2k-2-i}
\]
and the result follows immediately from plugging this bound into \Cref{thm:expand}.

Our bound on $\gamma^{(-)}_{k-2}$ follows largely the same strategy as Alev and Lau's original analysis. Let $G_S$ denote the link of a co-dimension $2$ face $S \in X(k-2)$, and let $N[S]$ denote the union of $S$ and vertices of $G$ with a neighbor in $S$. As in Alev-Lau, the idea is now to observe that since the top level faces are distributed uniformly, the random walk matrix underlying $M_S$ can be written as
\[
M_S = D_S^{-1}(J-I-A_{G[\overline{N[S]}]}),
\]
where $A_{G[\overline{N[S]}]}$ is the un-normalized adjacency matrix of the induced graph on $G[\overline{N[S]}]$ and $D_S$ is the standard diagonal degree matrix of $G_S$. We can then lower bound the smallest eigenvalue of $M_S$ by upper bounding the largest eigenvalue of $-M_S$ as:
\begin{align*}
    \lambda_1(-M_S) &\leq \lambda_1(D_S^{-1/2}(A_{G[\overline{N[S]}]} + I)D_S^{-1/2}) - \lambda_1(D_S^{-1}J)\\
    &=  \lambda_1(D_S^{-1/2}(A_{G[\overline{N[S]}]} + I)D_S^{-1/2})- \sum\limits_{v \in G_S}\frac{1}{\text{deg}_{G_S}(v)}\\
    &\leq  \norm{D_S^{-1}} \lambda_1(A_{G[\overline{N[S]}]} + I) - \sum\limits_{v \in G_S}\frac{1}{\text{deg}_{G_S}(v)}& \text{(Variational Characterization)}\\
    &=  \norm{D_S^{-1}} (\lambda_1(A_{G[\overline{N[S]}]})+1)- \sum\limits_{v \in G_S}\frac{1}{\text{deg}_{G_S}(v)}\\
    &\leq \norm{D_S^{-1}} (\lambda_1(A_{G})+1)  - \sum\limits_{v \in G_S}\frac{1}{\text{deg}_{G_S}(v)}& \text{(Cauchy's Interlacing Theorem).}\\
    &\leq \norm{D_S^{-1}} (\lambda_1(A_{G})-1)
\end{align*}
where the last step follows from observing that $\norm{D_S^{-1}}=\frac{1}{\text{min-deg}(G_S)}$ and $\sum\limits_{v \in G_S}\frac{1}{\text{deg}_{G_S}(v)} \geq \frac{2}{\text{min-deg}(G_S)}$.

Finally as observed in \cite[Lemma 4.3]{alev2020improved}, note that we can bound the degree of any $v \in G_S$ by
\[
\text{deg}_{G_S}(v) = |V| - |N[S]|-(\text{deg}_{G[\overline{N[S]}]}(v)+1) \geq |V|-(\Delta_{\text{max}}+1)(k-1).
\]
Thus we have
\[
\norm{D_S^{-1}} \leq \frac{1}{|V| - (\Delta_{\text{max}}+1)(k-1)},
\]
and altogether
\[
|\lambda_{\text{min}}(M_S)| \leq \frac{\lambda_1(A_G)-1}{|V| - (\Delta_{\text{max}}+1)(k-1)}.
\]
Combined with Alev and Lau's bound on $\lambda_2(M_S)$ and our assumption on degree, this gives
\[
\max\{\lambda_2(M_S),|\lambda_{\text{min}}(M_S)|\} \leq \frac{\norm{A_G}-1}{|V| - (\Delta_{\text{max}}+1)(k-1)} \leq \frac{1}{k}
\]
as desired.
\end{proof}
Since $\norm{A_{\bar{G}}} \leq \Delta_{\text{max}}(\bar{G}) = |V| - \Delta_{\text{min}}(G)$, the simplified version in the introduction follows as an immediate corollary. We now take a look at two corollaries of this bound: first to the structure of non-expanding sets on $3$-dimensional clique complexes, then for general $k$.
\begin{corollary}\label{cor:clique}
Let $G=(V,E)$ be a graph with $\Delta_{\text{min}} \geq \frac{5}{6}|V|$. Then any subset $S \subset K_{G,3}(3)$ of the $3$-clique complex with expansion at most 
\[
\Phi(S) \leq \frac{1}{3}-\delta
\]
must contain a constant fraction of the triangles touching some vertex $v \in G$:
\[
\exists v \in V: \underset{X_v}{\mathbb{E}}[1_S] \geq c\delta.
\]
\end{corollary}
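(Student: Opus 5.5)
The plan is to specialize \Cref{thm:clique} to $k=3$, $i=1$ and take the contrapositive. First I need to check the degree hypothesis. With $k=3$, \Cref{thm:clique} requires
\[
\Delta_{\text{min}} \geq \tfrac{1}{2}|V| + \tfrac{3\norm{A_{\bar G}}-1}{2}.
\]
Using $\norm{A_{\bar G}} \leq |V|-\Delta_{\text{min}}$, it suffices that $\Delta_{\text{min}} \geq \frac{|V|}{2} + \frac{3(|V|-\Delta_{\text{min}})}{2}$. Rearranging, this is $\frac{5}{2}\Delta_{\text{min}} \geq 2|V|$, i.e.\ $\Delta_{\text{min}} \geq \frac{4}{5}|V|$ (up to the harmless $-\frac12$). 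This is implied by $\Delta_{\text{min}} \geq \frac{5}{6}|V|$, which matches the introduction's threshold $\frac{2k-2}{2k-1}|V|=\frac{4}{5}|V|$.

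Next, \Cref{thm:clique} with $k=3$, $i=1$ gives, for every $(\varepsilon,1)$-global boolean $f=1_S$,
\[
\Phi(S) \geq (1-\mathbb{E}[1_S])\tfrac{2}{3\cdot 2} - \tfrac54\varepsilon = \tfrac13 - \tfrac13\mathbb{E}[1_S] - \tfrac54\varepsilon .
\]
Now suppose $\Phi(S) \leq \frac13 - \delta$. Set $\varepsilon^* \coloneqq \max_{v}\mathbb{E}_{X_v}[1_S] - \mathbb{E}[1_S]$, so that $S$ is $(\varepsilon^*,1)$-global. Combining the two inequalities gives
\[
\delta \leq \tfrac13\mathbb{E}[1_S] + \tfrac54\varepsilon^* .
\]

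To finish I need a lower bound on $\max_v \mathbb{E}_{X_v}[1_S]$, and the argument splits into two cases. Since $\mathbb{E}[1_S]$ is an average of the link densities, $\max_v \mathbb{E}_{X_v}[1_S] \geq \mathbb{E}[1_S]$.
\begin{itemize}
\item If $\mathbb{E}[1_S] \geq \delta/2$, some vertex already has link density at least $\delta/2$.
\item Otherwise $\frac13\mathbb{E}[1_S] < \delta/6$. The inequality then forces $\varepsilon^* \geq \frac45\cdot\frac{5\delta}{6} = \frac{2\delta}{3}$, so some $v$ satisfies $\mathbb{E}_{X_v}[1_S] \geq \mathbb{E}[1_S]+\varepsilon^* \geq \frac{2\delta}{3}$.
\end{itemize}
In either case the conclusion holds with $c=\frac12$.

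The only delicate point is the first step: confirming that $\frac56|V|$ implies the precise spectral-norm form of the degree condition in \Cref{thm:clique}, including the $-1$ term and the bound $\norm{A_{\bar G}}\le \Delta_{\max}(\bar G)$. Everything after that is arithmetic.
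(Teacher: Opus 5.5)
Your proposal is correct and follows the paper's proof: both apply \Cref{thm:clique} with $k=3$, $i=1$ to get $\Phi(S)\geq \frac13-\frac13\mathbb{E}[1_S]-\frac54\varepsilon$. Both then settle the case $\mathbb{E}[1_S]\geq \delta/2$ by averaging over vertex links, and otherwise force $\varepsilon\geq \frac{2\delta}{3}\geq \frac{\delta}{2}$. Your check that $\Delta_{\min}\geq\frac56|V|$ implies the spectral degree hypothesis, via $\norm{A_{\bar G}}\leq\Delta_{\max}(\bar G)\leq |V|-\Delta_{\min}(G)$, is the same remark the paper makes right after \Cref{thm:clique}.
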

\begin{proof}
Assume for the sake of contradiction that $f$ is $(\varepsilon,1)$-global. We will show it must be the case that $\varepsilon \geq \frac{\delta}{2}$. By \Cref{thm:clique} we have that
\[
\frac{1}{3} - \delta \geq \Phi(f) \geq \frac{1}{3}- \frac{1}{3}\mathbb{E}[f] - \frac{5}{4}\varepsilon
\]
and therefore that
\[
\varepsilon \geq \frac{4}{5}\delta - \frac{4}{15}\mathbb{E}[f].
\]
We can assume without loss of generality that $\mathbb{E}[f] \leq \frac{1}{2}\delta$ (otherwise the result is trivial by averaging), which in turn gives $\varepsilon \geq \frac{\delta}{2}$ as desired.
\end{proof}
We are not aware of any other method for showing such a characterization on low-dimensional clique complexes. It is further worth noting that while requiring min-degree at least $\frac{5}{6}|V|$ seems very restrictive (and indeed does imply the resulting complex is dense), clique complexes of much lower degree complexes are not even guaranteed to be connected (consider, e.g.\ two disjoint copies of the complete graph $K_{n/2}$, which has min-degree roughly $\frac{5}{6}|V|$).


For large $k$, the same proof strategy can be used to show that any non-expanding set has constant density in some $k-O(1)$ dimension link.
\begin{corollary}
There is a universal constant $c>0$ such that for any $i,k \in \mathbb{N}$, $0 < \delta < 1$, and graph $G=(V,E)$ of minimum degree at least $\Delta_{\text{min}} \geq \frac{2k-2}{2k-1}|V|$, any subset $S \subset X_{G,k}(k)$ with expansion at most
\[
\Phi(S) \leq \frac{1}{i}(1-\delta)
\]
has constant correlation with a link of co-dimension $i$:
\[
\exists \tau \in X(k-i): \underset{X_\tau}{\mathbb{E}}[1_S] \geq c\delta.
\]
\end{corollary}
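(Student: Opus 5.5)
We apply \Cref{thm:clique} with $k-i$ in the role of $i$, i.e.\ to $(\varepsilon,k-i)$-global functions, and then argue by contradiction exactly as in \Cref{cor:clique}.

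\textbf{Step 1: local spectral bounds.} We check that the degree hypothesis $\Delta_{\text{min}}\ge \frac{2k-2}{2k-1}|V|$ puts us in the regime of \Cref{thm:clique}. Since $\norm{A_{\bar G}}\le \Delta_{\text{max}}(\bar G)\le |V|/(2k-1)$, the required condition $\Delta_{\text{min}} \geq \frac{k-2}{k-1}|V| + \frac{k\norm{A_{\bar G}}-1}{k-1}$ holds. This is the simplified form already noted after the proof of \Cref{thm:clique}. Consequently $\gamma_{k-2}\le 1/k$ and $\gamma^{(-)}_{k-2}\ge -1/k$.

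\textbf{Step 2: the expansion bound for co-dimension $i$.} Apply \Cref{thm:expand} at level $k-i$. By Trickling-Down, $\gamma_j\le \frac{1}{2k-2-j}$, so the product telescopes:
\[
\prod_{j=k-i}^{k-2}(1-\gamma_j)\ \ge\ \prod_{j=k-i}^{k-2}\frac{2k-3-j}{2k-2-j}=\frac{k-1}{k+i-2},
\]
which is bounded below by an absolute constant (e.g.\ $\ge 1/2$ once $i\le k$). The main term is therefore $\frac{1-\mathbb{E}[f]}{i}\cdot\Omega(1)$. Matching the stated threshold $\frac{1}{i}(1-\delta)$ exactly needs care: either the intended constant absorbs this factor, or one uses the sharper estimate $\gamma_j\lesssim 1/k$ in the low levels. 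If the constant cannot be recovered, I would state the threshold as $\frac{c_0}{i}(1-\delta)$.

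Next, bound $c_{k,k-i,\gamma}$ by an absolute constant. Here $\gamma^{(-)}_{k-i-1}\ge -\frac{1}{k+i-1}$, so $1+i\gamma^{(-)}_{k-i-1}\ge 1-\frac{i}{k+i-1}\ge \frac{1}{2}$ roughly. The difference factor equals $\frac{1}{i+1}\big(\frac{1}{i}+\gamma_{k-i-1}\big)\prod_{j}(1-\gamma_j)$ by the computation at the end of the proof of \Cref{thm:expand}, so $(i+1)\cdot(\cdot)\le \frac1i+\frac1k\le 2$. Hence $c\le O(1)$, and we obtain
\[
\Phi(f)\ge \frac{1-\mathbb{E}[f]}{i}-O(\varepsilon).
\]

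\textbf{Step 3: contradiction.} Suppose every $\tau\in X(k-i)$ has $\mathbb{E}_{X_\tau}[1_S]<c\delta$. If $\mathbb{E}[1_S]\ge c\delta$, averaging over $\tau\sim\pi_{k-i}$ already gives the conclusion, so assume $\mathbb{E}[1_S]<c\delta$. Then $1_S$ is $(c\delta,k-i)$-global, and Step 2 gives
\[
\Phi(S)\ge \frac{1-c\delta}{i}-O(c\delta),
\]
which exceeds $\frac{1-\delta}{i}$ for $c$ small. The term $O(c\delta)$ is not divided by $i$, so this requires the $\varepsilon$-coefficient to scale like $1/i$. It does, since the difference factor above is $O(1/i)$; I would track this carefully.

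\textbf{Main obstacle.} The hardest part is the constant bookkeeping in Step 2. Specifically, one must show that the product $\prod(1-\gamma_j)$ and the $c_{k,k-i,\gamma}$ term degrade only by absolute constants, and that the $\varepsilon$-term scales like $1/i$, so that a single universal $c$ works.
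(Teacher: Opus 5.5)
Your route (\Cref{thm:expand}, equivalently \Cref{thm:clique}, applied to $(\varepsilon,k-i)$-global functions, then the contradiction argument of \Cref{cor:clique}) is the one the paper gestures at; it gives no separate proof. Step 1 and the averaging/contradiction framework of Step 3 are fine.

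The gap is in Step 2: you apply Trickling-Down in the wrong direction for the positive spectrum. \Cref{thm:trickling} gives
\[
\gamma_j \le \frac{\gamma_{k-2}}{1-(k-2-j)\gamma_{k-2}} \le \frac{1}{j+2},
\]
which gets \emph{worse} as $j$ decreases (e.g.\ $\gamma_0\le \tfrac12$). The bound $\frac{1}{2k-2-j}$ is the one for $|\gamma^{(-)}_j|$. With the correct bound,
\[
\prod_{j=k-i}^{k-2}(1-\gamma_j)\ \ge\ \prod_{j=k-i}^{k-2}\frac{j+1}{j+2}=\frac{k-i+1}{k}.
\]
This is exactly the main term $\frac{i+1}{k(k-i)}$ of \Cref{thm:clique} with $k-i$ in place of $i$. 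This lower bound is not an absolute constant; it is $2/k$ at $i=k-1$. So neither $\Phi(f)\ge \frac{1-\mathbb{E}[f]}{i}-O(\varepsilon)$ nor your fallback threshold $\frac{c_0}{i}(1-\delta)$ with universal $c_0$ follows. Nothing in the paper supplies a ``sharper $\gamma_j\lesssim 1/k$ at low levels'', since Alev--Lau only control co-dimension-$2$ links.

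Your claim that the $\varepsilon$-coefficient is $O(1/i)$ does survive, though not for the reason you give. With the correct bounds, $\gamma_{k-i-1}\le\frac{1}{k-i+1}$ is tamed by the product factor, giving (for $k\ge 2$)
\[
c_{k,k-i,\gamma}\le \left(\frac{k-i+1}{ki}+\frac{1}{k}\right)\frac{k+i-1}{k-1}\le \frac{6}{i}.
\]
Running Step 3 then yields
\[
\Phi(S)\ \ge\ \frac{1}{i}\left[\left(1-\frac{i-1}{k}\right)(1-c\delta)-6c\delta\right],
\]
which exceeds $\frac{1-\delta}{i}$ only when $\delta\gtrsim \frac{i-1}{k}$.

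So the argument proves the corollary in three restricted forms:
\begin{itemize}
\item for $i=1$;
\item for $\delta\ge C\frac{i-1}{k}$, e.g.\ fixed $i$ and large $k$, which is the regime of the paper's remark about ``$k-O(1)$ dimension'' links;
\item for all $\delta$, with the threshold $\frac{k-i+1}{ki}(1-\delta)$.
\end{itemize}
It does not give the statement for all $i,k$ and $0<\delta<1$ as written. Your write-up should say which restricted version it establishes.
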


While characterizing non-expanding functions on common dense complexes is interesting in its own right, much of the promise of high dimensional expanders comes from their ability to give sparse models for dense objects such as the complete complex or product spaces. Prior analysis of small set expansion on such objects was limited to \textit{algebraic} constructions of HDX \cite{lubotzky2005explicit,kaufman2018construction,dikstein2023new}. As an immediate application of our expansion framework, we prove the following global expansion theorem and characterization of non-expanding sets on combinatorial `product' HDX. We refer the reader to \cite[Definition 12]{golowich2021improved} for the exact definition of the product construction we'll denote by $G~\circled{h}~Y$ for a graph $G$ and simplicial complex $Y$, and only use the fact that $\gamma_{k-2-i} \leq \frac{1}{k-i}$ below.
\begin{corollary}\label{cor:prod}
Let $k \geq 3$, $G$ be any graph on $n$ vertices, $X=G~\circled{h}~\Delta_{4k}(2k)$, $0 < i < k$, and $f \in C_{k}$ be any $(\varepsilon,i)$-global boolean function.
The expansion of $f$ with respect to the lower walk is at least:
\[
\Phi(f) \geq \frac{(1-\mathbb{E}[f])i}{(k-1)(k-i)} - 4\varepsilon
\]
\end{corollary}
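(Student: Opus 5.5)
The plan is to obtain \Cref{cor:prod} by substituting the local-spectral bounds for the product construction into \Cref{thm:expand}. Then I simplify the main term and bound the constant $c_{k,i,\gamma}$ by an absolute constant.

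\textbf{Step 1: the positive local spectra.} I use the stated property of $X=G~\circled{h}~\Delta_{4k}(2k)$ that $\gamma_{k-2-m}\leq \frac{1}{k-m}$ for the relevant range; equivalently, $\gamma_j\le \frac{1}{j+2}$ for $i\le j\le k-2$. The main-term product then telescopes:
\[
\prod_{j=i}^{k-2}(1-\gamma_j)\ \ge\ \prod_{j=i}^{k-2}\frac{j+1}{j+2}=\frac{i+1}{k}.
\]
Dividing by $k-i$, this already gives a main term of order $\frac{i+1}{k(k-i)}$. The claimed $\frac{i}{(k-1)(k-i)}$ is weaker, since $\frac{i}{k-1}\le\frac{i+1}{k}$ for $i<k$. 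So any slack in the indexing convention (for instance, if the guarantee is only $\gamma_j\le \frac{1}{j+1}$, which yields $\frac{i}{k-1}$ exactly) is absorbed. I would fix the convention against Golowich's definition and use whichever telescoping product it supports.

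\textbf{Step 2: the negative spectra and $c_{k,i,\gamma}$.} This is the main obstacle. I need $1+(k-i)\gamma^{(-)}_{i-1}$ bounded away from $0$ by a constant, and I see two ways to get it.

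\begin{itemize}
\item \emph{Trickling-down from the top links.} The top-dimensional links of the product construction are (up to the $G$-factor) links of a complete complex on $4k$ vertices, whose graphs are complete graphs on $\Theta(k)$ vertices. These have $\lambda_{\min}\ge -\frac{1}{2k}$ or so. \Cref{thm:trickling} then gives $\gamma^{(-)}_{i-1}\ge -\frac{1}{2k+(k-i)}$, and hence $1+(k-i)\gamma^{(-)}_{i-1}\ge 1-\frac{k-i}{3k-i}\ge \frac12$.
\item \emph{Direct bound.} If the construction's two-sided guarantee from \cite{golowich2021improved} directly gives $|\gamma^{(-)}_j|\le \frac{1}{2k}$, use that instead; this explains the $4k$ (twice $2k$) parameter.
\end{itemize}

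The other factor of $c_{k,i,\gamma}$ is $(k-i+1)\bigl(\frac{1}{k-i}P_i-\frac{1}{k-i+1}P_{i-1}\bigr)$, writing $P_i=\prod_{j=i}^{k-2}(1-\gamma_j)$. Since $P_{i-1}=(1-\gamma_{i-1})P_i$, it equals $\bigl(\frac{1}{k-i}+\gamma_{i-1}\bigr)P_i\le \frac{2}{k-i}\le 2$. Therefore $c_{k,i,\gamma}\le 2\cdot 2=4$, matching the stated $4\varepsilon$.

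\textbf{Step 3: assemble.} Plugging the Step 1 main-term bound and $c_{k,i,\gamma}\le 4$ into \Cref{thm:expand} gives $\Phi(f)\ge \frac{(1-\mathbb{E}[f])i}{(k-1)(k-i)}-4\varepsilon$. The only genuine work is checking, from the definition of the product construction, that the links of dimension $\ge i-1$ have negative spectrum at least $-O(1/k)$. Trickling-down from the nearly complete top links is the first route I would try.
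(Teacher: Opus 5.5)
Your proposal is correct and is essentially the paper's (omitted) argument: plug the product's two-sided bound $\max\{\gamma_{k-2},|\gamma^{(-)}_{k-2}|\}\le\frac1k$ from Golowich, trickled down from level $k-2$, into \Cref{thm:expand}. Your second route for the negative side is exactly this, giving $\gamma^{(-)}_{i-1}\ge-\frac{1}{2k-1-i}$ and hence $1+(k-i)\gamma^{(-)}_{i-1}\ge\frac{k-1}{2k-1-i}\ge\frac12$; your first route, which assumes the top links are essentially complete graphs, is an unverified claim about the construction and is not needed.

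One small slip: the step $\bigl(\frac{1}{k-i}+\gamma_{i-1}\bigr)P_i\le\frac{2}{k-i}$ would require $\gamma_{i-1}\le\frac{1}{k-i}$, which the available bound $\gamma_{i-1}\le\frac{1}{i+1}$ does not give for small $i$. The bound $\le 2$ that you actually use still holds, since $P_i\le 1$, $\frac{1}{k-i}\le 1$ and $\gamma_{i-1}\le 1$.
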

We omit the proof, which is immediate from the fact that the product satisfies $\max\{\gamma_{k-2},|\gamma_{k-2}^{(-)}|\} \leq \frac{1}{k}$ from \cite{golowich2021improved} and Trickle-down (\Cref{thm:trickling}). As for the case of clique-complexes, this leads to a novel characterization of non-expanding sets in low dimensions. For concreteness, we look at the setting of $k=3$.
\begin{corollary}\label{cor:prod2}
Let $G$ be any graph on $n$ vertices, and $X=G~\circled{h}~\Delta_{12}(6)$. There exists a universal constant $c >0$ such that any $S \subset X(3)$ with expansion at most
\[
\Phi(S) \leq \frac{1}{4} - \delta
\]
must contain a constant fraction of the triangles touching some vertex $v \in X(1)$:
\[
\exists v \in X(1): \underset{X_v}{\mathbb{E}}[1_S] \geq c\delta.
\]
\end{corollary}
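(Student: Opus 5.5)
We will follow the proof of \Cref{cor:clique} almost verbatim, with \Cref{cor:prod} in place of \Cref{thm:clique}. First specialize \Cref{cor:prod} to $k=3$ and $i=1$. The complex $X=G~\circled{h}~\Delta_{12}(6)$ is exactly $G~\circled{h}~\Delta_{4k}(2k)$ with $k=3$. The hypotheses $k\geq 3$ and $0<i<k$ are satisfied, so any $(\varepsilon,1)$-global boolean $f\in C_3$ satisfies
\[
\Phi(f) \geq \frac{(1-\mathbb{E}[f])\cdot 1}{(3-1)(3-1)} - 4\varepsilon = \frac{1}{4} - \frac{1}{4}\mathbb{E}[f] - 4\varepsilon .
\]

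Now take $S$ with $\Phi(S)\leq \frac14-\delta$ and set $f=1_S$. Let $\varepsilon$ be the smallest value such that $f$ is $(\varepsilon,1)$-global, i.e.\ $\varepsilon = \max_{v\in X(1)}\mathbb{E}_{X_v}[f_v]-\mathbb{E}[f]$. Combining the two bounds on $\Phi(S)$ gives
\[
\frac14-\delta \geq \frac14-\frac14\mathbb{E}[f]-4\varepsilon,
\qquad\text{hence}\qquad
\varepsilon \geq \frac{\delta}{4}-\frac{\mathbb{E}[f]}{16}.
\]

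We then split on the density of $S$.
\begin{itemize}
\item If $\mathbb{E}[f]\geq \delta/2$, the conclusion follows by averaging. By \Cref{eq:gar-1}, $\mathbb{E}[f]=\mathbb{E}_{v\sim\pi_1}\mathbb{E}_{X_v}[f_v]$, so some vertex $v$ has $\mathbb{E}_{X_v}[1_S]\geq \mathbb{E}[f] \geq \delta/2$.
\item Otherwise $\mathbb{E}[f]<\delta/2$, and then $\varepsilon\geq \frac{\delta}{4}-\frac{\delta}{32}=\frac{7\delta}{32}$. The maximizing vertex $v$ therefore has $\mathbb{E}_{X_v}[1_S]\geq \mathbb{E}[f]+\varepsilon \geq \frac{7}{32}\delta$.
\end{itemize}
In either case $c=7/32$ works.

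We do not expect a real obstacle here. The one point to check is that the expression $\mathbb{E}_{X_v}[1_S]$ in the conclusion is the same link-density notion used in the definition of globalness (taken over $X_v(k-1)$). It is, so the argument goes through directly. All the substantive content, namely the spectral bounds on the product complex together with \Cref{thm:expand}, is already packaged in \Cref{cor:prod}.
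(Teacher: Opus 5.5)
Your proposal is correct and is the argument the paper intends. The paper gives no separate proof of this corollary, but it is clearly meant to be the proof of \Cref{cor:clique} with \Cref{cor:prod} (at $k=3$, $i=1$) substituted for \Cref{thm:clique}, which is exactly what you do; your arithmetic checks out and yields the explicit constant $c = 7/32$.
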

\subsection{Small-Set Expansion of the Ramanujan Complexes}

The Ramanujan complexes are the seminal construction of high dimensional expanders \cite{lubotzky2005explicit}, but are \textit{one-sided} expanders and may be non-partite, failing the strong conditions required by prior work. Nevertheless, the Ramanujan complexes come with a variety of other substantial benefits not necessarily enjoyed by other constructions, leading to applications ranging from property and agreement testing \cite{kaufman2020local,kaufman2014high} to quantum codes \cite{EvraKZ20,KaufmanT21quantum} and Sum-of-Squares lower bounds \cite{dinur2020explicit}. It is natural therefore to ask whether these complexes also satisfy an analogous small-set expansion theorem to other constructions with stronger spectral guarantees.

We refer the reader to \cite{lubotzky2005explicit} for the rather involved details of these constructions, and state here only the salient points for our applications.
\begin{theorem}[The LSV-Complexes {\cite{lubotzky2005explicit}}]
    For every $\gamma>0$ and $d \in \mathbb{N}$, there exists an infinite family of complexes (the `LSV-complexes') $\{(X,\Pi)_n\}$ satisfying:
    \begin{enumerate}
        \item $(X,\Pi)_n$ is a $d$-dimensional, \textbf{non-partite} $\gamma$-one-sided HDX on $n$ vertices
        \item For all $v \in X(1)$: $(X_v,\Pi_v)_n$ is a \textbf{partite} $\gamma$-one-sided HDX on $O_{d,\gamma}(1)$ vertices.
    \end{enumerate}
\end{theorem}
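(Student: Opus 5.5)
The plan is not to prove this from scratch but to read the three properties off the algebraic structure of the LSV construction, with Garland's method and the Trickling-Down Theorem (\Cref{thm:trickling}) doing the spectral work. Recall that the LSV complexes are finite quotients $\Gamma \backslash \mathcal{B}$ of the Bruhat--Tits building $\mathcal{B}$ of $PGL_d(F)$, where $F=\mathbb{F}_q((t))$. Here $\Gamma$ ranges over (congruence subgroups of) the Cartwright--Steger lattice, which acts simply transitively on the vertices of $\mathcal{B}$. We give the quotient the uniform distribution on top faces, which is the natural one since the building is regular. Once the injectivity radius of the quotient exceeds a constant depending only on $d$, every vertex link of $X=\Gamma\backslash\mathcal{B}$ is isomorphic to the corresponding link in $\mathcal{B}$. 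The latter is the spherical building of $PGL_d(\mathbb{F}_q)$, i.e.\ the flag complex of proper nonzero subspaces of $\mathbb{F}_q^d$.

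\textbf{Step 1: vertex links are partite.} Each link is colored by subspace dimension $1,\ldots,d-1$: a chamber is a complete flag, so it contains exactly one subspace of each dimension. Hence $(X_v,\Pi_v)$ is partite with $d-1$ parts (a $(d-1)$-dimensional link in the paper's indexing). It has $O_{d,q}(1)$ vertices, namely the number of subspaces of $\mathbb{F}_q^d$.

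\textbf{Step 2: vertex links are $\gamma$-one-sided HDX.} Every link of the spherical building, and every iterated link of it, is again a product of smaller spherical buildings of type $A$. Its co-dimension-2 links are therefore either complete bipartite graphs, which are disconnected joins contributing trivial eigenvalues, or point-line incidence graphs of projective planes over $\mathbb{F}_q$. By the classical computation (Garland, or directly from the incidence matrix satisfying $NN^T = qI+J$), the latter have nontrivial eigenvalues $\pm\sqrt{q}/(q+1)$ in normalized form. So every $\gamma$ at co-dimension $2$ is at most $1/\sqrt{q}$. Trickling-Down (\Cref{thm:trickling}) then gives $\gamma_i \le \frac{1/\sqrt{q}}{1-(d-2-i)/\sqrt q}$ in every link, which is at most $\gamma$ once $q \ge q_0(d,\gamma)$. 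Since vertex links of $X$ are these spherical buildings, this yields the second property. Together with local isomorphism, it also shows all links of $X$ of co-dimension at least $2$ are $\gamma$-expanders, except possibly the $1$-skeleton of $X$ itself. For that graph we either invoke Trickling-Down once more (it needs only connectedness of the quotient, which holds) or cite the Ramanujan property of LSV directly.

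\textbf{Step 3: non-partiteness.} The building $\mathcal{B}$ carries a type function $v\mapsto \nu(\det g)\bmod d$, and the quotient is partite as a $d$-colored complex exactly when $\Gamma$ preserves types. The Cartwright--Steger lattice is simply transitive on vertices, so it contains elements of every type, i.e.\ it is not type-preserving. I expect the main obstacle to be showing that the finite quotient by a congruence subgroup $\Gamma$ still admits \emph{no} proper $d$-coloring at all, not merely no coloring induced by type. My first attempt would be to pick $\Gamma$ containing an element $g$ with $\nu(\det g)\not\equiv 0 \pmod d$, as LSV do for their non-type-preserving family. Any proper $d$-coloring of $X$ pulls back to a proper coloring of $\mathcal{B}$. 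Since the building is gallery connected, a proper $d$-coloring of $\mathcal{B}$ is unique up to permutation of colors, so the pullback must be type composed with a permutation. That coloring is then $\Gamma$-invariant only if $\Gamma$ acts by a fixed permutation of types. Any such $g$ shifts types cyclically, so $g$-invariance is impossible, which gives a contradiction. Finally, taking $n\to\infty$ along the congruence tower with injectivity radius growing, while $q$ and $d$ stay fixed, yields the infinite family.
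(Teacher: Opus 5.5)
The paper gives no proof of this statement. It is imported as a black box from \cite{lubotzky2005explicit}, with the reader referred there for the construction. Your proposal unpacks that citation, and the unpacking is sound. Vertex links of a quotient with large injectivity radius are spherical buildings of $PGL_d(\mathbb{F}_q)$, colored by subspace dimension. Their rank-two links are $K_{q+1,q+1}$ (second eigenvalue $0$) or incidence graphs of $PG(2,q)$ (second eigenvalue $\sqrt q/(q+1)$). \Cref{thm:trickling} applies to the whole quotient, since every link and the $1$-skeleton are connected, and gives all $\gamma_j\le\gamma$ once $q\ge q_0(d,\gamma)$. The link size is then the number of proper nonzero subspaces of $\mathbb{F}_{q_0}^d$; for $d=2$ there is no link graph, so there you do need the Ramanujan option you mention. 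Your non-partiteness reduction is also correct: a proper $d$-coloring of the gallery-connected building must be $\sigma\circ\mathrm{type}$, so the quotient is colorable iff $\Gamma$ lies in the kernel of the type homomorphism $t$. One wording fix: invariance forces $\Gamma$ to act \emph{trivially} on types, not just by a fixed permutation, which is what your next sentence actually uses.

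Compared with the bare citation, your route buys transparency. It shows where $q_0(d,\gamma)$ comes from, and why the links are partite but only one-sided: the link graphs are bipartite, so they carry eigenvalue $-1$. That is exactly why the paper must invoke the partite one-sided result \Cref{thm:ram-sse}. The one ingredient you still borrow from LSV is an infinite supply of non-type-preserving congruence subgroups. This property is not inherited down a nested tower, since any $\Gamma$ contains the type-preserving subgroup $\Gamma\cap\ker t$ of index at most $d$. So choose each member of the family separately rather than along one chain. A convenient sufficient condition is that $\tilde\Gamma/\Gamma(I)$ admits no surjection onto $\mathbb{Z}/d$ (e.g.\ it is a perfect group). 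This works because $t$ is surjective on $\tilde\Gamma$ by simple transitivity, so $\Gamma(I)\subseteq\ker t$ would force $t$ to factor through the finite quotient.
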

Since such complexes are one-sided and non-partite, they do not admit any (known) theory of Fourier analysis, and as a result the tools and results developed in \cite{dikstein2018boolean,bafna2020high,bafna2021hypercontractivity,gur2021hypercontractivity} do not apply. However, since their \textit{links} are partite HDX, we can appeal to the following result of \cite{hopkins2025hypercontractivity} to show the complexes are $1$-locally SSE.
\begin{theorem}[SSE for Partite HDX {\cite{hopkins2025hypercontractivity}}]\label{thm:ram-sse}
Let $(X,\Pi)$ be a $d$-dimensional partite $\gamma$-one-sided local-spectral expander. If $f \in C_d$ is $(\varepsilon,i)$-strongly global, then the expansion of $f$ with respect to $T_\rho$ is at least:
\[
\Phi_{T_\rho}(f) \geq 1 - \rho^{i+1} - c_i\varepsilon^{1/2} - c_k\gamma
\]
where $c_j \leq 2^{O(j)}$.
\end{theorem}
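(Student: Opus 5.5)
The statement is \Cref{thm:ram-sse}, attributed to \cite{hopkins2025hypercontractivity}. The plan is to reconstruct it from the Fourier-analytic machinery already available, namely \Cref{thm:Fourier-Analysis} together with a level-$i$ inequality for strongly global functions. There are three steps.

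\textbf{Step 1: decompose the non-expansion.} Applying \Cref{thm:Fourier-Analysis} (whose partite one-sided case covers our hypothesis), write $f=\sum_\ell f_\ell$ and
\[
\bar{\Phi}_{T_\rho}(f) = \sum_{\ell=0}^d \rho^\ell \frac{\langle f_\ell,f_\ell\rangle}{\langle f,f\rangle} \pm c_d\gamma .
\]
Split the sum at $\ell=i$. The terms with $\ell\geq i+1$ contribute at most $\rho^{i+1}$, because their weights sum to at most $1$ up to $O(c_d\gamma)$ from approximate orthogonality. It therefore suffices to show that the low-level mass satisfies
\[
\sum_{\ell\le i}\frac{\langle f_\ell,f_\ell\rangle}{\mathbb{E}[f]} \le c_i\varepsilon^{1/2}+c_d\gamma .
\]
Once that holds, $\bar{\Phi}_{T_\rho}(f)\le \rho^{i+1}+c_i\varepsilon^{1/2}+c_d\gamma$, which is the claim.

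\textbf{Step 2: level-$\le i$ inequality, via hypercontractivity.} This is the main obstacle. The variance route of \Cref{thm:i-inequality} only gives $\langle f, f_{\le i}\rangle\lesssim \varepsilon\,\mathbb{E}[f]$ with a $\lambda_{\min}$ loss that is fine for partite HDX, since the relevant upper-walk eigenvalues are $2^{-O(i)}$ up to $\gamma$. But $(\varepsilon,i)$-strong globalness bounds link densities by $\varepsilon$ itself, not relative to $\mathbb{E}[f]$, so one needs $\mathbb{E}[f]$ to cancel. I would follow the standard route of \cite{keevash2019hypercontractivity,gur2021hypercontractivity}:
\[
\|f_{\le i}\|_2^2=\langle f,f_{\le i}\rangle \le \|f\|_{4/3}\,\|f_{\le i}\|_4 = \mathbb{E}[f]^{3/4}\|f_{\le i}\|_4 .
\]
The key ingredient is a global hypercontractive inequality for low-degree functions on partite HDX,
\[
\|g\|_4^4 \le 2^{O(i)}\|g\|_2^2\max_{\tau}\|(D g)|_\tau\|_2^2 \quad (|\tau|\le i),
\]
proved by coupling with the product space through the partite structure and absorbing $\gamma$ errors. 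The derivatives of $f_{\le i}$ restricted to $\tau$ are controlled by the link densities $\mathbb{E}[f|_\tau]\le\varepsilon$ via the $\ell_\infty$-to-$\ell_2$ reduction from \cite{bafna2020high}.

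\textbf{Step 3: rearrange.} Substituting the Step 2 bound into the Hölder inequality and rearranging gives $\|f_{\le i}\|_2^2\le 2^{O(i)}\varepsilon^{1/2}\mathbb{E}[f]$, up to $c_d\gamma$ error. Here the exponent $1/2$ is the expected loss from this $L_4$ argument. Plugging this into Step 1 finishes the proof.
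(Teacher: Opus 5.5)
\textbf{Gap: Step~3 yields $\varepsilon^{1/3}$, not $\varepsilon^{1/2}$.} The paper does not prove this statement itself. It imports it from \cite{hopkins2025hypercontractivity}, and \Cref{app:basis} only supplies the approximate Efron--Stein decomposition of $T_\rho$ (\Cref{thm:Fourier-Analysis}), which is your Step~1. Your overall plan (split the spectrum at level $i$, then prove a level-$\le i$ inequality via global hypercontractivity) is the standard one, and Step~1 is fine. Step~3, however, does not give the stated exponent. Set $g=f_{\le i}$. You have $\|g\|_2^2\le \mathbb{E}[f]^{3/4}\|g\|_4$, and, since you bound every derivative only by the link density, $\|g\|_4^4\le 2^{O(i)}\varepsilon\,\|g\|_2^2$. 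Combining these,
\[
\|g\|_2^{3/2}\le \mathbb{E}[f]^{3/4}\bigl(2^{O(i)}\varepsilon\bigr)^{1/4}, \qquad\text{i.e.}\qquad \|g\|_2^2\le 2^{O(i)}\varepsilon^{1/3}\,\mathbb{E}[f],
\]
so as written you prove the theorem with $\varepsilon^{1/3}$, not $\varepsilon^{1/2}$.

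The missing step is a sharper bound on the derivative terms, obtained by induction on $i$. For $\tau\neq\emptyset$, the derivative of $f_{\le i}$ is (up to $2^{O(d)}\gamma$ errors) the level-$\le i-|\tau|$ part of a combination of $2^{|\tau|}$ averages of restrictions of $f$. Each such restriction is Boolean, has density at most $\varepsilon$, and is $(\varepsilon,i-|\tau|)$-strongly global in its link. The triangle inequality and the inductive hypothesis then bound the squared norm by $2^{O(i)}\varepsilon^{1/2}\cdot\varepsilon$. Using $2^{O(i)}\varepsilon^{3/2}$ in place of $2^{O(i)}\varepsilon$ in the same H\"older computation gives $2^{O(i)}\varepsilon^{1/2}\mathbb{E}[f]$. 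The $\tau=\emptyset$ term gives $\|g\|_2^2\le 2^{O(i)}\mathbb{E}[f]^{3/2}\le 2^{O(i)}\varepsilon^{1/2}\mathbb{E}[f]$, using $\mathbb{E}[f]\le\varepsilon$. This is where the exponent $1/2$ comes from, and the induction closes exactly there, with base case $\|f_{\le 0}\|_2^2=\mathbb{E}[f]^2\le\varepsilon\,\mathbb{E}[f]$.

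\textbf{Secondary issue: why the variance route fails.} Your reason for discarding it is wrong, and correcting it explains why hypercontractivity is needed at all. The nonzero spectrum of $\widehat{N}_i^{d-i}$ on a partite complex is not $2^{-O(i)}$. Already on an exact product, a function on $i$-faces supported on one color set $T$ with pure Efron--Stein degree $T$ has eigenvalue $\binom{d}{i}^{-1}$. For $i=1$, $D^d_1U^d_1=\frac{d-1}{d}M_1^++\frac1d I$ has eigenvalue $\frac1d$ on functions that are mean-zero on every color class. Conversely, there is no problem with $\mathbb{E}[f]$ failing to cancel. For strongly global Boolean $f$ one has $\|D^d_if\|_2^2\le \max_\tau D^d_if(\tau)\cdot\mathbb{E}[f]\le\varepsilon\,\mathbb{E}[f]$ directly, so \Cref{lem:variance-nonzero} yields $\|f_{\le i}\|_2^2\lesssim\binom{d}{i}\varepsilon\,\mathbb{E}[f]$. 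The real obstruction is the factor $\binom{d}{i}$, which violates $c_i\le 2^{O(i)}$. Global hypercontractivity is what trades linear dependence on $\varepsilon$ for a dimension-free constant.

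Finally, that hypercontractive inequality on one-sided partite HDX is the substantive input; it is the content of the cited work, built on $\gamma$-products and \Cref{thm:hdx-ES}. Citing it is consistent with what the paper does. But ``coupling with the product space and absorbing $\gamma$ errors'' is an assertion, not a proof.
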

We give the proof in \Cref{app:basis} for completeness. As an immediate corollary, we get a global SSE Theorem for the Ramanujan complexes:
\begin{corollary}\label{cor:LSV-expand}
Let $(X,\Pi)_n$ be an LSV-complex with $\gamma$-one-sided local-spectral expansion, and $f \in C_k$ an $(\varepsilon,i)$-global set of density at most $\varepsilon$. Then the expansion of $f$ is at least:
\[
\Phi_{T_\rho}(f) \geq 1 - \rho^{i} - c_i\varepsilon^{1/2} - c_k\gamma - \rho^k
\]
where each $c_i \leq 2^{O(i)}$.
\end{corollary}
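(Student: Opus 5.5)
The plan is to apply the ``moreover'' part of \Cref{thm:local-to-global} with $j=1$, so that the LSV-complex is treated as $1$-locally SSE. For this we need two local facts about every vertex link $X_v$. First, $X_v$ is a partite $\gamma$-one-sided HDX; this is exactly item 2 of the LSV theorem. Second, $X_v$ satisfies a strongly-global SSE theorem with the right parameters, which we take from \Cref{thm:ram-sse}.

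Concretely, fix $v \in X(1)$ and let $f$ be $(\varepsilon,i)$-strongly global of density at most $\varepsilon$; this is the reading of the hypothesis under which \Cref{thm:local-to-global} applies. The localization $f|_v$ lives on the $(k-1)$-dimensional partite link, with faces of $X_v$ of size $i-1$ corresponding to faces $v\cup\sigma \in X(i)$. So $f|_v$ is $(\varepsilon,i-1)$-strongly global in $X_v$. Applying \Cref{thm:ram-sse} inside $X_v$ at level $i-1$ gives
\[
\Phi_{T^{k-1}_\rho|_v}(f|_v) \geq 1-\rho^{(i-1)+1} - c_{i-1}\varepsilon^{1/2} - c_{k}\gamma,
\]
so we may set $\phi(\varepsilon,i) = 1-\rho^{i} - c_i\varepsilon^{1/2} - c_k\gamma$, absorbing $c_{i-1}\le c_i$.

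Next we invoke the second bound of \Cref{thm:local-to-global} with $j=1$ (equivalently, \Cref{cor:noisy-garland2} combined with Garland averaging over $\tau\sim\pi_1^f$). This gives
\[
\Phi_{T^k_\rho}(f) \geq \phi(\varepsilon,i) - F_{\mathcal{B}_k^\rho}(0) - c_k\gamma.
\]
The cutoff term is a single binomial probability, namely the probability that zero out of $k$ trials succeed. With the convention of \Cref{cor:noisy-garland2}, where $\mathcal{B}^\rho_k(i)$ weights $N_k^{k-i}$, this mass is at most $\max(\rho,1-\rho)^k$. It equals $\rho^k$ under the paper's stated term, modulo the convention switch between the two displayed forms of $T_\rho$. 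Merging the two $c_k\gamma$ terms into a single $2^{O(k)}\gamma$ then yields the claim.

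The main obstacle is checking that the hypotheses of \Cref{cor:noisy-garland2} really hold here. The Fourier expansion \Cref{thm:Fourier-Analysis} is applied only inside the links $X_\tau$ with $\tau\in X(1)$, and these are partite one-sided HDX, so the non-partite global structure never enters. I would verify that the proof of \Cref{cor:noisy-garland2} applies \Cref{thm:Fourier-Analysis} only locally, in particular in \Cref{claim:binomial} and the re-application step. I would also track the restriction $\rho\le 1-1/e$ and the indexing conventions, so that the truncation error is exactly the $\rho^k$ term.
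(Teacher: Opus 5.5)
Your proposal follows the paper's proof: \Cref{thm:ram-sse} applied inside the partite vertex links makes the LSV complex $1$-locally SSE, and the second bound of \Cref{thm:local-to-global} with $j=1$ finishes. Your indexing $\phi(\varepsilon,i)=1-\rho^{i}-c_i\varepsilon^{1/2}-c_k\gamma$ is the consistent one. It comes from applying \Cref{thm:ram-sse} to the $(\varepsilon,i-1)$-strongly global $f|_v$. The paper instead writes $\rho^{i+1}$ in $\phi$, which is off by one against its own definition of locally SSE, yet it still concludes $\rho^{i}$.

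The step that does not go through as written is the cutoff term. There is no convention switch to appeal to. The two displayed formulas for $T_\rho$ are the same operator after $i\mapsto k-i$. The proof of \Cref{cor:noisy-garland2} expands $\mathcal{B}_k^\rho(i+j)$ as $\binom{k}{i+j}\rho^{i+j}(1-\rho)^{k-j-i}$. So $F_{\mathcal{B}_k^\rho}(0)=(1-\rho)^k$, which is strictly larger than $\rho^k$ whenever $\rho<1/2$. The paper's one-line appeal to \Cref{thm:local-to-global} also literally produces $(1-\rho)^k$, not $\rho^k$. Your bound $\max(\rho,1-\rho)^k$ is true but does not give the stated term.

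The clean repair is to stop bounding the truncated walk's non-expansion by $1$. With $j=1$, the only truncated term is $\mathcal{B}_k^\rho(0)\bar{\Phi}_{N_k^k}(f)$. Since $N_k^k f=\mathbb{E}[f]\vec{1}$, for boolean $f$ this term equals exactly $(1-\rho)^k\mathbb{E}[f]\le\varepsilon\le\varepsilon^{1/2}$. It is therefore absorbed into $c_i\varepsilon^{1/2}$ at the cost of replacing $c_i$ by $c_i+1$. This gives the stated inequality, with the $-\rho^k$ term now pure slack.

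A minor point: reading the hypothesis literally as $(\varepsilon,i)$-global with density at most $\varepsilon$ only makes $f$ $(2\varepsilon,i)$-strongly global. This costs a factor $\sqrt{2}$ in $c_i$ and nothing more.
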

\begin{proof}
     By \Cref{thm:ram-sse}, the LSV complexes $(X,\Pi)_n$ are 1-locally $\phi$-SSE for
    \[
    \phi(\varepsilon,i) = 1 - \rho^{i+1} - c_i\varepsilon^{1/2} - c_k\gamma
    \]
    The result is now immediate from \Cref{thm:local-to-global}.
\end{proof}
Equivalently, we can view this result via its contrapositive: any non-expanding set is local.
\begin{corollary}
For any $\rho \in [0,1]$, $k>\Omega(\rho^{-1})$, LSV-complex $(X,\Pi)_n$ with $\gamma \leq 2^{-\Omega(k)}$ and any $\delta>0$, there exist constants $r=r(\delta,\rho)$ and $s=s(\delta,\rho)$ such that for any boolean $f \in C_k$ with expansion $\bar{\Phi}_{T_{\rho}}(f) > \delta$, $f$ has density at least $s$ in some $r$-link:
    \[
    \exists \tau \in X(r): \underset{X_\tau}{\mathbb{E}}[f|_\tau] \geq s
    \]
    Moreover, we can take $r \leq O(\frac{\log \delta^{-1}}{\log \rho^{-1}})$, and $s \geq 2^{-O(r)}$.
\end{corollary}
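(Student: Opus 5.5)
The plan is to argue by contraposition, using \Cref{cor:LSV-expand} (equivalently \Cref{thm:local-to-global} combined with \Cref{thm:ram-sse}), after reducing to the strongly global, small-density setting in which that corollary applies. Fix $\rho$, $\delta$, and a boolean $f \in C_k$ with non-expansion $\bar{\Phi}_{T_\rho}(f) > \delta$. Set $s$ to be a small threshold to be chosen later, and suppose for contradiction that $\mathbb{E}_{X_\tau}[f|_\tau] < s$ for every $\tau \in X(r)$. Then $f$ is $(s,r)$-strongly global.

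\textbf{Step 1: the density condition.} By the averaging identity \Cref{eq:gar-1}, $\mathbb{E}[f] = \mathbb{E}_{\tau \sim \pi_r}[\mathbb{E}_{X_\tau}[f|_\tau]] < s$. So $f$ also has density at most $s$, which is exactly the hypothesis of \Cref{cor:LSV-expand} with $\varepsilon = s$ and $i = r$.

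\textbf{Step 2: apply the SSE bound.} \Cref{cor:LSV-expand} then gives
\[
\bar{\Phi}_{T_\rho}(f) \leq \rho^{r} + c_r s^{1/2} + c_k\gamma + \rho^k .
\]

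\textbf{Step 3: choose parameters so each error term is at most $\delta/4$.}
\begin{itemize}
\item For the first term, choose $r = \lceil \log(4/\delta)/\log(1/\rho) \rceil = O(\frac{\log \delta^{-1}}{\log \rho^{-1}})$, so that $\rho^r \le \delta/4$.
\item For the second term, since $c_r \le 2^{O(r)}$, take $s = (\delta/(4c_r))^2$. This makes $c_r s^{1/2} \le \delta/4$, and $s \geq \delta^2 2^{-O(r)}$.
\item The $\delta^2$ factor is absorbed into $2^{-O(r)}$ because $\delta \geq \rho^{O(r)}$. This needs $\log(1/\rho) = O(1)$, i.e.\ $\rho$ bounded below; otherwise we must accept a dependence of $s$ on $\delta$, which the statement permits through $s = s(\delta,\rho)$.
\item For the third term, the hypothesis $\gamma \le 2^{-\Omega(k)}$ with a suitable constant, against $c_k \le 2^{O(k)}$, gives $c_k\gamma \le \delta/4$ once $k$ is large in terms of $\delta$.
\item For the fourth term, $\rho^k \le \delta/4$ follows from $k$ large. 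I read the hypothesis $k > \Omega(\rho^{-1})$ as supplying this, possibly after adjusting constants depending on $\delta$.
\end{itemize}
Summing, $\bar{\Phi}_{T_\rho}(f) \le \delta$, contradicting $\bar{\Phi}_{T_\rho}(f) > \delta$.

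\textbf{Main obstacle.} The difficulty is bookkeeping rather than mathematics. First, we must ensure $r \le k$ (and $r$ within the range where $1$-local SSE applies to $(r-1)$-links), which follows from $k$ large. Second, the dimension-dependent errors $c_k\gamma$ and $\rho^k$ must be dominated by $\delta$. This is where the hypotheses $k > \Omega(\rho^{-1})$ and $\gamma \le 2^{-\Omega(k)}$ are used, and the implicit constants may need to depend on $\delta$. I will state this dependence explicitly and note that it is harmless.
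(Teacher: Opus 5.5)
Your proposal is essentially the paper's argument: the paper presents this corollary simply as the contrapositive of \Cref{cor:LSV-expand}, and your averaging step (strong globalness at level $r$ forces density below $s$) together with your choices $\rho^r\le\delta/4$ and $s=(\delta/(4c_r))^2$ is exactly what that contrapositive requires. You are also right that $s\ge 2^{-O(r)}$ needs $\log(1/\rho)=O(1)$, or else $\rho$-dependent constants.

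One small improvement: you do not need to let the hidden constants depend on $\delta$, which would clash with the statement's quantifier order since the complex is fixed before $\delta$. If your $r$ is at least $k$, the conclusion is trivial: replace $r$ by $k$ and take $\tau$ to be any $k$-face in the support of $f$, whose link density is $1$. If instead $r\le k-1$, then $\rho^{k-1}\le\rho^r\le\delta/4$, so $\rho^k\le\delta/4$ automatically, and $c_k\gamma\le\delta/4$ as soon as $\gamma\le\rho^{k-1}2^{-O(k)}$, i.e.\ $\gamma\le 2^{-\Omega_\rho(k)}$.
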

\section*{Acknowledgements}
We thank Yotam Dikstein, Louis Golowich, Tali Kaufman, Shachar Lovett, and Anthony Ostuni for discussion and comments on an earlier draft of this work.
\bibliographystyle{unsrt}  
\bibliography{references} 
\appendix

\section{Fourier Analysis on HDX}\label{app:basis}
Most of the Fourier analytic results needed for \Cref{cor:noisy-garland2} follow without too much difficulty from existing techniques. However since the results are not explicitly stated in any work (and often require pulling from several sources), we give a contained version here. Our main goal is to prove the following result:
\begin{theorem}[\Cref{thm:Fourier-Analysis} Restated]
    Let $(X,\Pi)$ be a $k$-dimensional $\gamma$-two-sided or partite one-sided HDX. Then there exists a decomposition $f=\sum\limits_{i=0}^k f_i$ such that:
    \[
    \bar{\Phi}_{N^{k-i}_k}(f) = \sum\limits_{\ell=0}^i \frac{{k - \ell \choose i - \ell}}{{k \choose i}} \frac{\langle f_\ell, f_\ell \rangle}{\langle f, f \rangle} \pm c_k\gamma
    \]
    where $c_k \leq 2^{O(k)}$. Similarly:    \[
    \bar{\Phi}_{T_\rho^k}(f) = \sum\limits_{\ell=0}^k \rho^\ell \frac{\langle f_\ell, f_\ell \rangle}{\langle f, f \rangle} \pm c_k\gamma
    \]
\end{theorem}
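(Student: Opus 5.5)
We take $f_\ell$ to be the HDX Fourier decomposition of \cite{dikstein2018boolean,bafna2021hypercontractivity}: $f_\ell = U^k_\ell g_\ell$ with $g_\ell\in \mathrm{Ker}(D_\ell)$, the "harmonic" components. (In the partite one-sided case we use the partite/colored analog from \cite{gur2021hypercontractivity,hopkins2025hypercontractivity}, indexed by color sets and grouped by size.) The proof has three steps.

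\textbf{Step 1: approximate eigenfunctions of the down-up walks.} On a $\gamma$-two-sided HDX, the local-spectral bound gives the approximate commutation relation
\[
D_{m+1}U_m = \tfrac{1}{m+1}I + \tfrac{m}{m+1}U_{m-1}D_m + \Gamma_m, \qquad \|\Gamma_m\|\le O(\gamma).
\]
This follows from Garland's method (\Cref{lemma:garland}, \Cref{eq:gar-3}) applied to $M_m^+ - U_{m-1}D_m$, localized to links of dimension $m-1$. Iterating the relation, we will show that each $f_\ell$ is an approximate eigenvector of $N^{k-i}_k = U^k_iD^k_i$:
\[
\big\|U^k_iD^k_i f_\ell - \lambda_{i,\ell} f_\ell\big\| \le 2^{O(k)}\gamma\,\|f_\ell\|, \qquad \lambda_{i,\ell} = \frac{\binom{k-\ell}{i-\ell}}{\binom{k}{i}} \text{ for } \ell\le i,\quad \lambda_{i,\ell}=0 \text{ otherwise}.
\]
The value $\lambda_{i,\ell}$ is exactly the Johnson-scheme eigenvalue. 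To obtain it, commute each $D$ past the $U$'s in $D^k_iU^k_\ell g_\ell$ and kill the terms in which a $D$ reaches $g_\ell$.

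In the partite one-sided case the analogous commutation is replaced by the partite swap-walk bounds of \cite{gur2021hypercontractivity}, which need only one-sided expansion. The eigenvalue for colored level $S$ is then the probability that $S$ survives inside the retained color set, which averages to the same $\binom{k-\ell}{i-\ell}/\binom{k}{i}$.

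\textbf{Step 2: approximate orthogonality.} We show
\[
|\langle f_\ell,f_m\rangle| \le 2^{O(k)}\gamma\,\|f_\ell\|\,\|f_m\| \text{ for } \ell\ne m, \qquad \sum_\ell \|f_\ell\|^2 = (1\pm 2^{O(k)}\gamma)\|f\|^2 .
\]
This is a standard consequence of Step 1: the $f_\ell$ are approximate eigenvectors of the single self-adjoint operator $N_k^1$ with eigenvalues $1-\ell/k$, which are separated by $1/k$. The existence of the decomposition, i.e.\ that $\sum_\ell f_\ell = f$ exactly, is obtained by recursively defining $g_\ell$ as the projection of the residual $D^k_\ell(f-\sum_{m<\ell}f_m)$ onto $\mathrm{Ker}(D_\ell)$.

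\textbf{Step 3: combine.} Expand
\[
\langle f, N^{k-i}_k f\rangle = \sum_{\ell,m}\langle f_m, N^{k-i}_k f_\ell\rangle ,
\]
apply Step 1 to each term, then Step 2 to discard the cross terms, and divide by $\langle f,f\rangle$. The accumulated error is $2^{O(k)}\gamma$, since there are $(k+1)^2$ pairs and each error is normalized by $\|f\|^2$ up to $1\pm O(\gamma)$ factors. For $T_\rho$, use linearity: $T_\rho$ is the binomial mixture of the walks $N^{k-i}_k$, and the mixture of the eigenvalues is
\[
\sum_i \binom{k}{i}\rho^i(1-\rho)^{k-i}\,\frac{\binom{k-\ell}{i-\ell}}{\binom{k}{i}} = \rho^\ell\sum_{i}\binom{k-\ell}{i-\ell}\rho^{i-\ell}(1-\rho)^{k-i} = \rho^\ell .
\]
The error stays $2^{O(k)}\gamma$ because the mixture is convex.

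\textbf{Main obstacle.} The hard step is Step 1 in the one-sided partite case. There the naive commutation fails because negative eigenvalues are uncontrolled, so the partite structure must be used. We would first try importing the colored decomposition and the swap-walk spectral bounds from \cite{gur2021hypercontractivity,hopkins2025hypercontractivity} verbatim, and then check that the colored eigenvalues collapse to the uncolored binomial form. A secondary difficulty is tracking the $2^{O(k)}$ blow-up when the commutation relation is iterated $k$ times.
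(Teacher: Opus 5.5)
Your plan follows the paper's argument. The paper defers the two-sided case to the DDFH/BHKL level decomposition, which your Step 1 commutation argument reconstructs. For partite one-sided complexes it imports GLL's Efron--Stein theorem and writes $N^{k-i}_k=\binom{k}{i}^{-1}\sum_{|T|=i}E_T$, so that $f^{=S}$ has approximate eigenvalue $\Pr_{|T|=i}[S\subseteq T]=\binom{k-|S|}{i-|S|}/\binom{k}{i}$; $T_\rho$ is then handled through the same binomial mixture you use.

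The only substantive difference is in Step 2. The paper cites GLL's approximate orthogonality of the Efron--Stein pieces. You instead derive orthogonality between levels from the separated approximate eigenvalues $1-\ell/k$ of the self-adjoint operator $N^1_k$. That argument is valid and more self-contained. Orthogonality between levels is also all the statement needs, since it is phrased through $\langle f_\ell,f_\ell\rangle$ for the grouped levels. In the partite case GLL's error terms are relative to $\norm{f}$ rather than $\norm{f_\ell}$, which is harmless after dividing by $\langle f,f\rangle$.

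One auxiliary claim is wrong as written. Defining $g_\ell$ bottom-up as the projection of $D^k_\ell(f-\sum_{m<\ell}f_m)$ onto $\mathrm{Ker}(D_\ell)$ does \emph{not} give $\sum_\ell f_\ell=f$. Take edges of the complete complex on $n$ vertices ($k=2$). Your recursion gives $g_1=D_2f-\mathbb{E}[f]$, but $D_2U_1g_1=\frac{n-2}{2(n-1)}g_1$. Hence $D_2(f-f_0-f_1)=\frac{n}{2(n-1)}g_1\neq 0$, so the residual has a nonzero component in $\mathrm{Im}(U_1)=\mathrm{Ker}(D_2)^\perp$, and the final projection discards it.

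The exact decomposition has to be built top-down. Split $f=h_k+U_{k-1}f'$ along $C_k=\mathrm{Ker}(D_k)\oplus\mathrm{Im}(U_{k-1})$ and recurse on $f'$; this gives $f=\sum_\ell U^k_\ell h_\ell$ with each $h_\ell\in\mathrm{Ker}(D_\ell)$. In the partite case exactness is automatic, since M\"obius inversion gives $f=\sum_S f^{=S}$. You are citing the DDFH decomposition anyway, so this is a local fix and the rest of your argument goes through.
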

The two-sided variant is actually proven explicitly in \cite{bafna2020high,bafna2021hypercontractivity}, so we focus on the partite one-sided case which is studied implicitly in \cite{gur2021hypercontractivity}. More formally, the authors study a notion of expansion on partite complexes they call `$\gamma$-products.' To define these, we need to introduce a new type of random walk between colors on a partite complex, originally introduced by Dikstein and Dinur \cite{dikstein2019agreement} in the context of agreement testing.
\begin{definition}[Colored Swap-Walks {\cite{dikstein2019agreement}}]
    Given a partite simplicial complex $(X,\Pi)$ and $i,j \in [d]$, the colored swap-walk $M^{i,j}$ walks from $X^i$ to $X^j$ and has bipartite adjacency operator:
    \[
    M^{i,j}(v,w) = \frac{\pi_{w,1}(v)}{\sum\limits_{z \in X^i} \pi_{w,1}(z)}.
    \]
\end{definition}
On a product space, every $M^{i,j}$ is a complete bipartite graph (since colors are completely independent). Gur, Lifshitz, and Liu introduced the notion of a $\gamma$-product to relax this constraint to simply requiring these operators expand.
\begin{definition}[$\gamma$-Product {\cite{gur2021hypercontractivity}}]
    A partite complex $(X,\Pi)$ is a $\gamma$-product if every link $X_\tau$ of co-dimension at least 2, every colored swap-walk is a $\gamma$-one-sided spectral expander:
    \[
    \forall i,j: \lambda_2(M_\tau^{i,j}) \leq \gamma,
    \]
\end{definition}
It is not hard to show that $\gamma$-products are \textit{equivalent} to one-sided partite HDX up to a factor in dimension. Indeed the `hard' direction (HDX $\to$ $\gamma$-product) was already shown in prior work of Dikstein and Dinur \cite{dikstein2019agreement}, while the `easy' direction follows from Oppenheim's Trickling-Down Theorem.

\begin{theorem}[$\gamma$-Product $\iff$ Partite HDX]
    Let $(X,\Pi)$ be a simplicial complex and $\gamma < \frac{1}{d-1}$. Then:
    \begin{enumerate}
        \item If $(X,\Pi)$ is a $\gamma$-one-sided HDX, then it is a $\frac{\gamma}{1-(d-2)\gamma}$-product
        \item If $(X,\Pi)$ is a $\gamma$-product, it is a $\frac{\gamma}{1-(d-2)\gamma}$-one-sided HDX
    \end{enumerate}
\end{theorem}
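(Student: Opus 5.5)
We treat the two implications separately. In both we use the structure of a link graph in a partite complex. For a link $X_\tau$ of co-dimension $m \geq 2$, the vertices split into the $m$ colors $X_\tau^{c}$ not used by $\tau$, and each color class carries weight $1/m$ under $\pi_{\tau,1}$. The random-walk matrix $A_\tau$ is then block-structured: from a vertex of color $c$ it moves to color $c'$ with probability $\frac{1}{m-1}$, and given that, it moves according to the colored swap-walk $M_\tau^{c,c'}$. So
\[
\langle f, A_\tau f\rangle = \frac{1}{m(m-1)}\sum_{c\neq c'}\langle f^{c}, M_\tau^{c,c'} f^{c'}\rangle_{c},
\]
where $f^{c}$ is the restriction of $f$ to color $c$ and $\langle\cdot,\cdot\rangle_c$ is the inner product under the conditional distribution on $X_\tau^c$.

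For direction (2) ($\gamma$-product $\Rightarrow$ HDX), take $f \perp \mathbf{1}$ on $X_\tau(1)$. Write $f^{c} = \mu_c\mathbf{1} + g^{c}$ with $g^{c}\perp \mathbf{1}$, where $\sum_c \mu_c = 0$ because $f \perp \mathbf{1}$. Each $M^{c,c'}_\tau$ preserves constants, and its adjoint does too, so the cross terms between $\mu$'s and $g$'s vanish. The mean part then contributes $\frac{1}{m(m-1)}\sum_{c\neq c'}\mu_c\mu_{c'} = -\frac{1}{m(m-1)}\sum_c\mu_c^2 \leq 0$. By the $\gamma$-product assumption and Cauchy--Schwarz/AM--GM, the oscillating part contributes at most $\frac{\gamma}{m(m-1)}\sum_{c\neq c'}\|g^c\|\|g^{c'}\| \leq \frac{\gamma}{m}\sum_c \|g^c\|^2 \leq \gamma\|f\|^2$. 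Hence $\lambda_2(A_\tau)\leq \gamma$ on every link of co-dimension at least $2$. This is already at least as strong as the claimed $\frac{\gamma}{1-(d-2)\gamma}$; if only co-dimension-$2$ links were controlled, I would instead finish with \Cref{thm:trickling}.

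For direction (1) (HDX $\Rightarrow$ $\gamma$-product), I would follow Dikstein--Dinur and run an Oppenheim-style trickling-down argument on swap walks. Fix $\tau$ and colors $c\neq c'$. If $X_\tau$ has co-dimension exactly $2$, then $M^{c,c'}_\tau$ is the bipartite half of $A_\tau$. In that case its second eigenvalue is bounded by $\lambda_2(A_\tau)\leq\gamma$, since a bipartite graph's nontrivial spectrum is $\pm$ the singular values of its blocks. For larger co-dimension, localize with \Cref{lemma:garland}: for a third color $e$ we have
\[
\langle g, M^{c,c'}_\tau h\rangle = \underset{v\sim X^e_\tau}{\mathbb{E}}\,\langle g|_v, M^{c,c'}_{\tau\cup v} h|_v\rangle.
\]
Decompose $g|_v$ and $h|_v$ into their local means plus parts orthogonal to constants. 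The orthogonal parts are handled by induction, giving a factor $\gamma_{m-1}$. The local means are $M^{c,e}_\tau$- and $M^{c',e}_\tau$-averages of $g$ and $h$, which produces a recursive inequality of the shape $\lambda \leq \gamma' + \gamma'\lambda$ or similar. Solving it as in the proof of \Cref{thm:trickling} should give $\frac{\gamma}{1-(d-2)\gamma}$.

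The main obstacle is setting up this recursion so that the constant comes out exactly as stated, since the means term couples $M^{c,c'}$ with the other swap walks. I would first try to bound all $\lambda_2(M^{\cdot,\cdot}_\tau)$ at a given co-dimension simultaneously by a single $\lambda_m$, and derive $\lambda_m \leq \frac{\lambda_{m-1}}{1-\lambda_{m-1}}$, mirroring Oppenheim's recurrence. Iterating this recurrence from $\lambda_2 \leq \gamma$ yields the claimed bound, provided $\gamma<\frac{1}{d-1}$ keeps the denominators positive.
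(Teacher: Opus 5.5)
Your direction (2) is correct, and it takes a different route from the paper's. The paper uses the swap-walk bound quantitatively only at co-dimension $2$, where $A_\tau$ is the bipartite graph of $M^{1,2}_\tau$. At higher co-dimension it uses the hypothesis only to show every $A_\tau$ is connected, and then \Cref{thm:trickling} propagates the bound upward; that step is where the loss $\frac{\gamma}{1-(d-2)\gamma}$ comes from. Your block decomposition of $A_\tau$ into swap walks has a mean part $-\frac{1}{m(m-1)}\sum_c\mu_c^2\le 0$ and an oscillating part $\le\gamma\|f\|^2$. It uses the hypothesis at every co-dimension and gives $\lambda_2(A_\tau)\le\gamma$ directly, which is a sharper conclusion with no connectivity argument and no Trickling-Down. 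The paper's route buys robustness instead: it would still work if you only had quantitative swap-walk expansion in co-dimension-$2$ links plus connectivity elsewhere.

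For direction (1), the paper proves nothing itself and simply cites Dikstein--Dinur [Corollary 7.6]. Your self-contained sketch has the right architecture, but the step you leave open does not close the way you guess. Carry out your localization with unit mean-zero $g,h$, write $a=\|M^{e,c}_\tau g\|$ and $b=\|M^{e,c'}_\tau h\|$, and apply Cauchy--Schwarz over $v$ and AM--GM. You get
\[
\langle g,M^{c,c'}_\tau h\rangle \le ab+\lambda_{m-1}\sqrt{(1-a^2)(1-b^2)} \le \lambda_{m-1}+(1-\lambda_{m-1})\lambda_m^2,
\]
where $\lambda_m$ is the largest second singular value over the swap walks of the link $\tau$. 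Taking the supremum gives $\lambda_m\le\lambda_{m-1}+(1-\lambda_{m-1})\lambda_m^2$. This is a quadratic recursion, not the linear one $\lambda\le\gamma'+\gamma'\lambda$ you guessed, and it has the same shape as Oppenheim's. Its solution set is $\{\lambda_m\le\frac{\lambda_{m-1}}{1-\lambda_{m-1}}\}\cup\{\lambda_m\ge 1\}$. A disconnected swap graph has $\lambda_m=1$ and satisfies the inequality, so the inequality alone proves nothing.

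What is missing is an a priori bound $\lambda_m<1$, the analogue of strong connectivity in \Cref{thm:trickling}, and your sketch never addresses it. Here it is easy to supply. Test $\lambda_2(A_\tau)\le\gamma$ on the function equal to $g$ on color $c$, $h$ on color $c'$, and $0$ elsewhere. This gives $\langle g,M^{c,c'}_\tau h\rangle\le(m-1)\gamma$ for unit mean-zero $g,h$, so $\lambda_m\le(m-1)\gamma<1$ by $\gamma<\frac{1}{d-1}$. With the correct root selected, iterating from $\lambda_2\le\gamma$ gives $\lambda_m\le\frac{\gamma}{1-(m-2)\gamma}$, which is the stated constant at $m=d$. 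The same condition $(m-1)\gamma<1$ keeps $\lambda_{m-1}<\frac12$, so the two roots are correctly ordered. Alternatively, you can just cite Dikstein--Dinur for this direction, as the paper does.
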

\begin{proof}
    The first result is \cite[Corollary 7.6]{dikstein2019agreement}, and follows from a partite variant of the Trickling-Down Theorem. To prove the second, observe that 
    \begin{enumerate}
        \item For $\tau \in X(d-2)$: $A_\tau=M_\tau^{1,2}$,
        \item For any $i \leq d-2$ and $\tau \in X(i)$: $A_\tau$ is connected.\footnote{if $A_\tau$ is disconnected, there exist disconnected vertices $v_i$ and $v_j$ of colors $i$ and $j$. By construction, $v_i$ and $v_j$ are then also disconnected in $M_\tau^{i,j}$, which violates the expansion assumption.}
    \end{enumerate}
    Since we are promised $\lambda_2(M_\tau^{1,2}) \leq \gamma < \frac{1}{d-1}$ and all links are connected, Trickling-Down (\Cref{thm:trickling}) implies the result.
\end{proof}
GLL show that $\gamma$-products, and therefore partite one-sided HDX, have an approximate Fourier basis. Their result is based off a classical basis for product spaces called the \textit{Efron-Stein Decomposition}. To state this, it will first be useful to define a family of partite averaging operators 
\begin{definition}[Partite Averaging Operators]
    Given a partite simplicial complex $(X,\Pi)$, the partite averaging operators $\{E_T\}_{T \subseteq [d]}: C(d) \to C(d)$ average over elements in $[n]\setminus T$:
    \[
    E_Tf(x) = \underset{X_{x_T}}{\mathbb{E}}[f|_{x_T}],
    \]
    i.e.\ $E_Tf$ is the conditional expectation of $f$ given that coordinates $T$ are fixed to $x_T$.
\end{definition}
The Efron-Stein decomposition divides $f$ into components $f^{=S}$ that make up the contribution to $f$ from each coordinate set, which essentially amounts to applying inclusion-exclusion to $E_Sf$.
\begin{definition}
 Given a partite simplicial complex $(X,\Pi)$ and $f$, define
 \[
 f^{=S} = \sum\limits_{T \subseteq S}(-1)^{|S \setminus T|}E_Tf
 \]
 We denote the sum of components at a given level as $f_i \coloneqq \sum\limits_{|S|=i}f^{=S}$
\end{definition}
It is not hard to check that $\{f^{=S}\}$ decompose $f$, that is.
\[
f=\sum\limits_{S \subseteq [d]}f^{=S}
\]
GLL proved that the Efron-Stein decomposition is an approximate Fourier basis for the partite averaging operators in the following sense.
\begin{theorem}[Efron-Stein Decomposition on HDX]\label{thm:hdx-ES}
Let $(X,\Pi)$ be a partite $\gamma$-one-sided HDX. Then the Efron-Stein Decomposition is approximately orthogonal:
\[
\forall S,T: \langle f^{=S},f^{=T}\rangle \leq 2^{O(|S|+|T|)}\gamma\norm{f}_2^2,
\]
and approximately an eigenbasis for every $E_T$:
\begin{enumerate}
    \item if $S \subseteq T: E_Tf^{=S}=f^{=S}$
    \item If $S \nsubseteq T: \norm{E_Tf^{=S}}_2 \leq \sqrt{|S||T|}2^{|S|}\gamma \norm{f}$
\end{enumerate}
\end{theorem}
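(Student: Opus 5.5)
The plan is to reduce everything to one operator estimate: the partite averaging operators approximately commute and compose like conditional expectations on a product space. Concretely, I will aim for
\[
\norm{E_TE_V - E_{T\cap V}} \leq c\sqrt{|T||V|}\,\gamma \qquad \text{for all } T,V \subseteq [d].
\]
I will also use three exact facts.
\begin{itemize}
\item Each $E_T$ is a conditional expectation, hence a self-adjoint contraction.
\item Tower property: if $U \subseteq T$ then $E_TE_U = E_UE_T = E_U$, since $E_Uf$ depends only on $x_U$, which is fixed under conditioning on $x_T$.
\item $\norm{f^{=S}} \leq 2^{|S|}\norm{f}$, by the triangle inequality applied to the inclusion--exclusion formula.
\end{itemize}

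To prove the operator estimate, I condition on $x_{T\cap V}$ and pass to the link of that face. In the link, $E_TE_V - E_{T\cap V}$ acts as follows: it takes the function $E_Vf$, which depends only on the colors $V\setminus T$, and averages it to the colors $T\setminus V$ through the colored swap walk between these disjoint color sets. The subtracted term $E_{T\cap V}$ is exactly the projection onto constants within the link. Hence the operator norm, link by link, is the second singular value of the swap walk between color sets $V\setminus T$ and $T\setminus V$ in the link of $x_{T\cap V}$.

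To bound that singular value, I invoke the equivalence between $\gamma$-products and partite HDX (the preceding theorem). This equivalence, together with Dikstein--Dinur's bound for swap walks between disjoint color sets (obtained by composing single-color swap walks / partite trickling down), should give $\lambda_2 \leq \sqrt{|T||V|}\,\gamma$ up to constants. This step is where I expect the main difficulty: getting the dependence on $|T|,|V|$ right for multi-color swap walks in a one-sided (not two-sided) setting. My first approach is the Dikstein--Dinur decomposition of the multi-color swap walk into a product of single-color swap walks, each of which is a $\gamma$-expander by the $\gamma$-product property.

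Item 1 of the statement is then exact: if $S\subseteq T$, every $U\subseteq S$ in the expansion of $f^{=S}$ has $E_TE_Uf=E_Uf$. For item 2, I pick $i\in S\setminus T$ and pair the inclusion--exclusion terms:
\[
f^{=S} = \sum_{U\subseteq S\setminus\{i\}} \pm\,(E_{U\cup\{i\}} - E_U)f .
\]
Applying $E_T$ to a pair and using $T\cap(U\cup\{i\}) = T\cap U$, the key estimate gives
\[
E_TE_{U\cup\{i\}}f - E_TE_Uf = (E_{T\cap U}f + O(\gamma)) - (E_{T\cap U}f + O(\gamma)),
\]
where each $O(\gamma)$ is at most $c\sqrt{|S||T|}\gamma\norm{f}$ in norm. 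There are $2^{|S|-1}$ pairs, so summing gives the claimed $\sqrt{|S||T|}\,2^{|S|}\gamma\norm{f}$ (absorbing constants).

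For approximate orthogonality with $S\neq T$, I may assume by symmetry that $S\not\subseteq T$. Since $f^{=T}\in \mathrm{Im}(E_T)$ exactly, namely $E_Tf^{=T}=f^{=T}$ by item 1, self-adjointness gives
\[
\langle f^{=S},f^{=T}\rangle = \langle E_Tf^{=S}, f^{=T}\rangle .
\]
I bound this by Cauchy--Schwarz, using item 2 for $\norm{E_Tf^{=S}}$ and $\norm{f^{=T}}\leq 2^{|T|}\norm{f}$. This yields $2^{O(|S|+|T|)}\gamma\norm{f}_2^2$.
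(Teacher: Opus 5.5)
Your reduction is correct and is the natural way to prove this. The paper itself gives no proof here; it imports the statement from GLL. Item 1 follows from the tower property. Item 2 follows from the cancellation $\sum_{U\subseteq S}(-1)^{|S\setminus U|}E_{T\cap U}=0$ when $S\not\subseteq T$. Approximate orthogonality for $S\neq T$ (the only sensible reading of ``$\forall S,T$'') follows from $\langle f^{=S},f^{=T}\rangle=\langle E_Tf^{=S},f^{=T}\rangle$ and Cauchy--Schwarz. Your link-by-link identification of $E_TE_V-E_{T\cap V}$ with the swap walk between $V\setminus T$ and $T\setminus V$, minus the projection onto constants, is also correct. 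Your worry about one-sidedness is unfounded: that operator is bipartite, so only its second singular value matters, and that is exactly what the $\gamma$-product condition controls for single colors.

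The gap is that the one substantive input is never supplied: that this multi-color swap walk has second singular value at most $\sqrt{|I||J|}\,\gamma$. The mechanism you propose for it does not exist. The swap walk $X[J]\to X[I]$ is not a product of single-color swap walks, because composing single-color swaps forgets the conditioning on the other colors. For instance, with $I=\{i_1,i_2\}$ and $J=\{j\}$, it is the $j\to i_1$ swap walk inside the link of each $x_{i_2}$, and a globally mean-zero $g$ need not be mean-zero in those links. If you instead cite a generic swap-walk bound of the form $O(|I||J|\gamma)$, you only get item 2 with $|S||T|$ in place of $\sqrt{|S||T|}$.

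The missing step closes with a short conditioning induction.
\begin{itemize}
\item Let $\Lambda(I,J)$ be the supremum of $|\langle f,g\rangle|$ over unit-norm, mean-zero $f=f(x_I)$ and $g=g(x_J)$, maximized over all links in which the colors of $I\cup J$ are still free.
\item For $i\in I$ with $|I|\geq 2$, condition on $x_i$ and split $f|_{x_i}$ and $g|_{x_i}$ into their link means $\bar f=E_{\{i\}}f$, $\bar g=E_{\{i\}}g$ plus mean-zero parts.
\item Applying the definition of $\Lambda$ inside the link of $x_i$ (again a $\gamma$-product), then Cauchy--Schwarz over $x_i$, gives
\[
|\langle f,g\rangle|\leq \Lambda(I\setminus\{i\},J)\sqrt{\left(1-\norm{\bar f}^2\right)\left(1-\norm{\bar g}^2\right)}+\norm{\bar f}\,\norm{\bar g},\qquad \norm{\bar g}\leq \Lambda(\{i\},J).
\]
\item Dropping $\sqrt{1-\norm{\bar g}^2}\leq 1$ and maximizing over $\norm{\bar f}$ yields $\Lambda(I,J)^2\leq \Lambda(I\setminus\{i\},J)^2+\Lambda(\{i\},J)^2$.
\item Iterating in both arguments from $\Lambda(\{i\},\{j\})\leq\gamma$ gives $\Lambda(I,J)\leq\sqrt{|I||J|}\,\gamma$. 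Equivalently, $\norm{E_TE_V-E_{T\cap V}}\leq\sqrt{|T\setminus V|\,|V\setminus T|}\,\gamma$ with constant $1$.
\end{itemize}
Plugged into your pairing argument, this gives item 2 exactly as stated for a $\gamma$-product. For a partite one-sided HDX it holds with $\gamma$ replaced by $\gamma/(1-(d-2)\gamma)$, via the equivalence theorem.
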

high order random walks on partite complexes can typically be written as convex combinations of the partite averaging operators. With this in mind, the proof of \Cref{thm:Fourier-Analysis} follows easily from expanding $f$ into the Efron-Stein basis.
\begin{proof}[Proof of \Cref{thm:Fourier-Analysis}]
    Observe that the lower walks can be written as the following convex combination of the colored averaging operators:
    \[
    N_k^{k-i} = \frac{1}{{k \choose i}}\sum\limits_{|T|=i}E_T
    \]
    It then follows from \Cref{thm:hdx-ES} that Efron-Stein is also an approximate eigenbasis for $N_k^i$:
    \begin{align*}
        N_k^{k-i}f^{=S} &=\frac{1}{{k \choose i}}\sum\limits_{|T|=i}E_Tf^{=S}\\
        &= \frac{1}{{k \choose i}}\sum\limits_{\underset{T \supseteq S}{|T|=i}}E_Tf^{=S} + \frac{1}{{k \choose i}}\sum\limits_{\underset{T \nsupseteq S}{|T|=i}}E_Tf^{=S}\\
        &= \frac{{k-|S| \choose i-|S|}}{{k \choose i}}f^{=S} + \vec{err}
    \end{align*}
where $\norm{\vec{err}}_2 \leq 2^{O(k)}\gamma$. Note that the lefthand term is really $0$ for $|S|>i$ (which we've denoted as a negative binomial coefficient for convenience).
    
The result for $N_k^{k-i}$ then follows from expanding out $\Phi(f)$ in this basis and applying approximate orthogonality:
\begin{align*}
    \Phi(f) &= 1 - \frac{1}{\langle f,f \rangle}\langle f, \widehat{N}_k^if \rangle\\
    &=1 - \frac{1}{\langle f,f \rangle}\sum\limits_{\ell=0}^k\langle f, \widehat{N}_k^if_\ell \rangle\\
       &=1 - \frac{1}{\langle f,f \rangle}\sum\limits_{\ell=0}^{i} \frac{{k-|S| \choose i-|S|}}{{k \choose i}}\langle f,f_\ell \rangle + c_1\gamma\\
       &=1 - \frac{1}{\langle f,f \rangle}\sum\limits_{\ell=0}^{i} \frac{{k-|S| \choose i-|S|}}{{k \choose i}}\langle f, f_\ell \rangle + c_2\gamma
\end{align*}
where $c_1,c_2 \leq 2^{O(k)}$. The result for $T_\rho$ follows similarly, in fact as GLL (\cite[Claim 8.3]{gur2021hypercontractivity}) note the expansion decomposition in this case is exact:
\[
\Phi_{T_\rho}(f) = \frac{1}{\langle f,f \rangle}\sum\limits_{i=0}^k \rho^{i}\langle f, f_i, \rangle
\]
so approximate orthogonality again implies the desired result.
\end{proof}
\section{Gotlib-Kaufman for the Lower Walk}\label{app:GK}
\begin{theorem}\label{thm:decomp-app}
For any simplicial complex $(X,\Pi)$, $0 < k \leq d$, and $f \in C_{k}$, we have:
\[
\langle N_k^1 f,f \rangle \leq \mathbb{E}[f]^2 + \sum\limits_{i=1}^{k-1}\left(1-\frac{1}{k-i+1}\prod\limits_{j=i-1}^{k-2}(1-\gamma_j) \right) \langle f,f_i \rangle.
\]
\end{theorem}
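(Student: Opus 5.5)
The plan is to expand $\langle N_k^1 f,f\rangle$ in the KO-decomposition and bound one level at a time. Write $f=\sum_{i=0}^k f_i$. Since $N_k^1=U_{k-1}D_k$, we have $\langle N_k^1 f,f\rangle=\norm{D_kf}^2$. By definition $f_k\in Ker(D^k_{k-1})=Ker(D_k)$, so $D_k f=\sum_{i=0}^{k-1}D_kf_i$, and the level-$k$ term drops out; this is why the sum in the statement stops at $k-1$.

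The first step is to control $\norm{D_k g}^2$ for $g=f_i$. Write $f_i=U^k_i h_i$ with $h_i\in Ker(D_{i-1})$, i.e.\ $h_i$ is a ``level-$i$ harmonic'' function. The key estimate, adapted from the proof of \cite[Theorem 7.9]{gotlib2022fine}, is
\[
\langle D_k U_{k-1} g, g\rangle \le \lambda_{k,i}\,\langle g,g\rangle, \qquad g=U^{k-1}_i h_i,\ h_i \in Ker(D_{i-1}),
\]
with $\lambda_{k,i}=1-\frac{1}{k-i+1}\prod_{j=i-1}^{k-2}(1-\gamma_j)$. I would prove it by induction on the level, using $D_{m+1}U_m=\frac{1}{m+1}I+\frac{m}{m+1}M_m^+$ together with Garland's method (\Cref{eq:gar-3}). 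Localizing $\langle M_m^+ g,g\rangle$ to $(m-1)$-links, the localized function $g|_\tau$ has mean governed by $D$ applied to $g$. The component orthogonal to constants contributes at most $\gamma_{m-1}$, and the constant part contributes $\norm{D_m g}^2$, which the inductive hypothesis controls. This yields a recursion of the form $\mu_{m+1}\le \frac{1}{m+1}+\frac{m}{m+1}\left(\gamma_{m-1}+(1-\gamma_{m-1})\mu_m\right)$ for the Rayleigh quotients $\mu_m$. Starting from $\mu_i=0$, which holds because $h_i\in Ker(D_{i-1})$, unrolling the recursion should give the product formula. Equivalently, one can run the argument on $1-\mu$.

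The main obstacle is that the KO components are not eigenvectors, so there are cross terms $\langle D_kf_i,D_kf_{i'}\rangle$, and a per-level Rayleigh bound does not immediately give a bound in terms of $\langle f,f_i\rangle$. I would first try to follow GK: $D_k f_i$ stays inside $Im(U^{k-1}_i)\cap Ker(D_{i-1})$, whose orthogonal pieces are the KO levels of $D_kf$ at level $k-1$. So $\norm{D_kf}^2=\langle f,U_{k-1}D_kf\rangle$ can be split as $\sum_i\langle f_i, N_k^1 f\rangle$. Using $f_i\perp f_{i'}$ and the invariance of the flag $Im(U^k_i)$ under $N_k^1$, the term $\langle f_i,N_k^1f\rangle$ should reduce to $\langle f_i,N_k^1 f_i\rangle$ plus terms that telescope. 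One then bounds $\langle f_i,N_k^1f_i\rangle\le\lambda_{k,i}\langle f_i,f_i\rangle=\lambda_{k,i}\langle f,f_i\rangle$.

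If the cross terms do not cancel exactly, the fallback is to apply the bound to the nested subspaces $V_{\ge i}=\bigoplus_{\ell\ge i}$ and sum by Abel summation. This uses the monotonicity $\lambda_{k,i}\ge\lambda_{k,i+1}$, which also gives the correct coefficients. Finally, the level-$0$ term is exactly $\langle f,f_0\rangle=\mathbb{E}[f]^2$, since $f_0$ is the constant $\mathbb{E}[f]$ and $N_k^1$ fixes constants.
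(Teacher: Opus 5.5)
Your bookkeeping for levels $0$ and $k$ is fine, but there is a genuine gap: your main route does not go through. With $U_{k-1}g=f_i$, your key estimate bounds $\langle D_kU_{k-1}g,g\rangle/\norm{g}^2=\norm{f_i}^2/\norm{g}^2$. The theorem needs $\norm{D_kf_i}^2/\norm{f_i}^2=\langle A^2g,g\rangle/\langle Ag,g\rangle$ with $A=D_kU_{k-1}$. Cauchy--Schwarz gives $\langle A^2g,g\rangle/\langle Ag,g\rangle\geq\langle Ag,g\rangle/\langle g,g\rangle$, which is the wrong direction. The same conflation breaks the recursion. With $g_m=U^m_ih_i$, the link-constant part contributes $\norm{D_mg_m}^2/\norm{g_m}^2$, and this is bounded \emph{below}, not above, by the previous step's quantity $\norm{g_m}^2/\norm{g_{m-1}}^2$. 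A telltale sign: unrolling $1-\mu_{m+1}\geq\frac{m}{m+1}(1-\gamma_{m-1})(1-\mu_m)$ from $\mu_i=0$ gives $1-\frac{i}{k}\prod_{j=i-1}^{k-2}(1-\gamma_j)$, not $\lambda_{k,i}$. For $1<i<k$ this is stronger than the theorem, and it is exactly the bound the paper's discussion section calls out of reach.

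The structural facts you invoke also fail on general complexes, already on a generically weighted $\Delta_n(3)$. Since $D_kU_{k-1}=\frac{1}{k}I+\frac{k-1}{k}M^+_{k-1}$ and $M^+_{k-1}$ need not preserve $\mathrm{Im}(U^{k-1}_i)$, in general $D_kf_i\notin\mathrm{Im}(U^{k-1}_i)$. The flag $\mathrm{Im}(U^k_i)$ is not $N_k^1$-invariant; if it were, $N_k^1$ would be block diagonal in the KO decomposition and there would be no cross terms at all. And for $i\geq 2$, $V_i$ is not $U^k_i$ of the harmonic functions, so $\mu_i=0$ is unavailable. These properties hold only approximately, on strong HDX, which is the regime where this per-level Fourier argument works.

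The fallback also has a gap. The compression bound $\langle N_k^1g,g\rangle\leq\lambda_{k,i}\norm{g}^2$ for $g\in V_{\geq i}=\mathrm{Ker}(D^k_{i-1})$ is true: localize to $(i-1)$-links by Garland, where $g|_\tau$ has mean zero, and apply the Alev--Lau second-eigenvalue bound in the $(k-i+1)$-dimensional link. But compression bounds on a nested chain of subspaces, even with non-increasing $\lambda_{k,i}$, do not imply $\langle N_k^1f,f\rangle\leq\sum_i\lambda_{k,i}\norm{f_i}^2$, because nothing controls the off-diagonal terms $\langle N_k^1f_i,f_j\rangle$. For instance, take $\R^2$ with $V_1=\mathrm{span}(e_1)$, $V_2=\mathrm{span}(e_2)$, $\lambda_1=\frac12$, $\lambda_2=\frac14$. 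The operator $N=\frac14\mathbf{1}\mathbf{1}^{\top}$ has top eigenvalue $\frac12$ and $\langle Ne_2,e_2\rangle=\frac14$, yet $f=e_1+e_2$ gives $\langle Nf,f\rangle=1>\frac34$.

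The missing idea, which is the paper's, is never to isolate the $N$-cross terms. Instead, induct on the dimension $k$ by localizing the whole form $\langle N_k^1f,f\rangle$ to vertex links.
\begin{itemize}
\item For $i\geq 2$, $(f_i)|_v\in\mathrm{Ker}((D|_v)^{k-1}_{i-2})$, so every level drops by one.
\item The only defect is the link-constant part $D^k_1f_1(v)$ of $(f_1)|_v$. Its total mass obeys $\norm{D^k_1f_1}^2\leq(1-\frac{k-1}{k}(1-\gamma_0))\norm{f_1}^2$.
\item The identity $\lambda_{k,2}+(1-\lambda_{k,2})(1-\frac{k-1}{k}(1-\gamma_0))=\lambda_{k,1}$ closes the step. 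It also explains why the gain is $\frac{1}{k-i+1}$ rather than $\frac{i}{k}$: level $i$ only gains once it has become level $1$ inside an $(i-1)$-link.
\item Because the shifted pieces are no longer orthogonal, one proves a strengthened statement for arbitrary $f_i\in\mathrm{Ker}(D^k_{i-1})$, with extra terms $\sum_{i\neq j}c_{i,j}\langle f_i,f_j\rangle$. These are plain inner products, so Garland averages them back to global inner products, which vanish for the KO decomposition.
\item The base case $k=2$ has no cross term because $f_2\in\mathrm{Ker}(D_2)$.
\end{itemize}
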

Following GK \cite{gotlib2022fine}, the proof revolves around reducing to level-$1$ by (inductively) localizing the decomposition. The crucial observation is that in each step, it is enough to analyze the contribution coming from the constant part of level 1, which is handled by the following lemma.
\begin{lemma}[{\cite[Lemma 7.8, Theorem 7.9]{gotlib2022fine}}]\label{lemma:advantage}
Let $(X,\Pi)$ be a simplicial complex and $g \in C_k$ any function orthogonal to $\vec{1}$. Then the parallel part of $g$ under restriction is small in expectation:
\[
\underset{v \sim \pi_1}{\mathbb{E}}\left[\norm{(N|_v)^{k-1}_{k-1}g|_v}_2^2\right] \leq \left(1-\frac{k-1}{k}(1-\gamma_0)\right)\norm{g}_2^2
\]
\end{lemma}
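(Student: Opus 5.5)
The plan is to observe that the operator inside the norm is simply the projection onto constants in the link. The statement then reduces to a spectral bound on the global down operator $D^k_1$, which I would obtain from the identity $D^k_1U^k_1 = \frac{1}{k}I + \frac{k-1}{k}M_1^+$ already used in the proof of \Cref{cor:level-1}.

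\textbf{Step 1 (identify the localized operator).} In the link $X_v$, which has dimension $k-1$ at the level of $g|_v$, the walk $(N|_v)^{k-1}_{k-1} = U^{k-1}_0 D^{k-1}_0$ goes all the way down to the empty face and back up. It therefore sends $g|_v$ to the constant function $\mathbb{E}_{X_v}[g|_v]\cdot\vec{1}$. Hence
\[
\norm{(N|_v)^{k-1}_{k-1}g|_v}_2^2 = \left(\mathbb{E}_{X_v}[g|_v]\right)^2 .
\]
By definition of the link distribution, $\mathbb{E}_{X_v}[g|_v] = (D^k_1 g)(v)$, since both average $g$ over $k$-faces containing $v$ weighted by $\pi_k$. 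Writing $h \coloneqq D^k_1 g \in C_1$, the left-hand side of the lemma is therefore exactly $\norm{h}_2^2$ with respect to $\pi_1$. Moreover $\mathbb{E}[h] = \mathbb{E}[g] = 0$, so $h \perp \vec{1}$.

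\textbf{Step 2 (Cauchy--Schwarz through adjointness).} By adjointness of $D$ and $U$,
\[
\norm{h}^2 = \langle D^k_1 g, h\rangle = \langle g, U^k_1 h\rangle \leq \norm{g}\,\norm{U^k_1 h},
\]
and
\[
\norm{U^k_1 h}^2 = \langle h, D^k_1U^k_1 h\rangle = \frac{1}{k}\norm{h}^2 + \frac{k-1}{k}\langle h, M_1^+ h\rangle .
\]

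\textbf{Step 3 (spectral input).} By \Cref{eq:gar-3} with $k=1$, where the only $0$-face is the empty face, $M_1^+$ on $C_1$ is exactly the normalized adjacency operator $A_\emptyset$ of the underlying graph of $X$. Since $h \perp \vec{1}$, we get $\langle h, M_1^+h\rangle \leq \gamma_0\norm{h}^2$; this uses $\gamma_0 \ge \lambda_2(A_\emptyset)$, which holds because $\gamma_0$ is defined as a maximum with $0$. Combining the three displays gives
\[
\norm{h}^4 \leq \norm{g}^2\left(\frac{1}{k}+\frac{k-1}{k}\gamma_0\right)\norm{h}^2 .
\]
Dividing by $\norm{h}^2$ (the case $h=0$ is trivial) yields
\[
\norm{h}^2 \le \left(1-\frac{k-1}{k}(1-\gamma_0)\right)\norm{g}^2,
\]
which is the claim.

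The only step requiring care is Step 1: one must verify that the normalization of $\pi_{v,\cdot}$ in the link matches the definition of $D^k_1$, so that the localized mean is exactly $(D^k_1 g)(v)$ and the expectation over $v\sim\pi_1$ matches the $\pi_1$-norm of $h$. I expect this to follow directly from the definitions of $\Pi_\tau$ and $D_i$.
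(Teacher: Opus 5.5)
Your proposal is correct and follows essentially the same route as the paper. You identify the localized term as $(D^k_1 g(v))^2$, so the left side equals $\norm{D^k_1 g}^2$, and then bound it using $D^k_1U^k_1 = \frac{1}{k}I + \frac{k-1}{k}M_1^+$, where $M_1^+ = A_\emptyset$ has second eigenvalue at most $\gamma_0$. The only difference is cosmetic: the paper moves from $U^k_1D^k_1$ to $D^k_1U^k_1$ by citing their common nonzero spectrum, whereas you do this transfer by hand via Cauchy--Schwarz, $\norm{h}^2=\langle g,U^k_1h\rangle\le\norm{g}\,\norm{U^k_1h}$, which is slightly more self-contained.
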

\begin{proof}
First, observe that since $(D|_v)^{k-1}_0g|_v$ is constant, we have:
\[
\norm{(N|_v)^{k-1}_{k-1}g|_v}_2^2=D^k_1g(v)^2
\]
and therefore that
\[
\underset{v \sim \pi_1}{\mathbb{E}}\left[\norm{(N|_v)^{k-1}_{k-1}g|_v}_2^2\right] = \langle D^k_1 g, D^k_1 g \rangle = \langle N_k^{k-1} g,g \rangle
\]
by adjointness of $D$ and $U$. Since $g$ is orthogonal to $\vec{1}$, it is enough to upper bound $\lambda_2(N_k^{k-1})$. It will be convenient to instead work with the upper walk $\widehat{N}_1^{k-1}$, which has the same non-zero spectrum.

In particular, it is well known that the non-lazy component of $\widehat{N}_1^{k-1}$ is simply $M_1^+$ (see e.g.\ \cite{alev2019approximating,gotlib2022fine}), so we can write:
\[
\lambda_2(\widehat{N}_1^{k-1}) = \lambda_2\left(\frac{1}{k}I + \frac{k-1}{k}M_1^+\right) \leq \frac{1}{k} + \frac{k-1}{k}\gamma_0 = 1-\frac{k-1}{k}(1-\gamma_0)
\]
as desired.
\end{proof}
The proof of the main theorem then follows from combining this fact with a basic induction on $k$, performed by successive restrictions.
\begin{proof}[Proof of \Cref{thm:decomp-app}]
The proof follows largely the same strategy as \cite[Theorems 5.8,7.9]{gotlib2022fine} where the only real difference lies in the base case. The proof is by induction on $k$. Note that since $f_0=\mathbb{E}[f]\vec{1}$, and the remaining $f_i$ are orthogonal, it is enough to consider the case where $f_0=0$.

Because the KO-decomposition does not behave well under restriction, as in \cite{gotlib2022fine} we will instead prove the more general statement for any sum $\sum\limits_{i=1}^k f_i$ satisfying the weaker constraint $f_i \in Ker(D^k_{i-1})$:
\begin{equation}
\left\langle N_k^1 \sum\limits_{i=1}^k f_i,\sum\limits_{i=1}^k f_i \right\rangle \leq \sum\limits_{i=1}^{k-1}\left(1-\frac{1}{k-i+1}\prod\limits_{j=i-1}^{k-2}(1-\gamma_j) \right) \langle f_i,f_i \rangle + \sum_{i \neq j} c_{i,j}\langle f_i,f_j \rangle\\
\end{equation}
Since the KO-decomposition is orthogonal and satisfies $f_i \in Ker(D^k_{i-1})$, this is sufficient to prove the result. For notational convenience, we continue to write $f= \sum\limits_{i=1}^k f_i$ throughout.
With this in mind, we start with the base case of graphs. Notice that by adjointness of $U$ and $D$ we have:
\begin{align*}
\langle N_2^1f, f \rangle = \langle U_{1}D_2(f_1+f_2),f_1+f_2 \rangle &= \langle D_2f_1,D_2f_1 \rangle
\end{align*}
since $f_2 \in \text{Ker}(D_2)$ by definition. The result now follows from \cite[Lemma 7.8]{gotlib2022fine}, who shows that
\[
\langle D_2f_1,D_2f_1 \rangle \leq \left(1-\frac{1}{2}(1-\gamma_0)\right)\norm{f_1}^2_2
\]
as desired (this also follows from the proof of \Cref{lemma:advantage} above).

The inductive step follows essentially the same argument as Gotlib-Kaufman (replacing relevant parameters throughout), but we include the proof for completeness. Because restriction is linear and respected by the lower walk, we have
\begin{align*}
\langle N_k^1f, f \rangle &= \left\langle N_k^1\sum\limits_{i=1}^k f_i, \sum\limits_{i=1}^k f_i \right\rangle\\
&= \underset{v \sim \pi_1}{\mathbb{E}}\left[ \left\langle (N|_v)_{k-1}^1\sum\limits_{i=1}^k f_i|_v, \sum\limits_{i=1}^k f_i|_v \right\rangle \right].
\end{align*}
Now that we have reduced to dimension $k-1$ we'd hope to apply the inductive hypothesis, but this requires a sum of the form $\sum_{i=1}^{k-1}h_i$ for $h_i \in \text{Ker}((D|_v)^{k-1}_{i-1})$. Gotlib and Kaufman observed that the only part of the previous expression that fails to achieve this form is the constant part of $f|_v$. In particular, letting
\begin{align*}
    h^v_i &\coloneqq (f_{i+1})|_v & (1 < i < k)\\
    h^v_i &\coloneqq (f_2)|_v + \left(1-(N|_v)_{k-1}^{k-1}\right)(f_1)|_v & (i = 1),
\end{align*}
we have $h^v_i \in \text{Ker}((D|_v)^{k-1}_{i-1})$ and can write:
\begin{align*}
\langle N_k^{1}f, f \rangle =& \underset{v \sim \pi_1}{\mathbb{E}}\left[\left\langle (N|_{v})_{k-1}^{1}\sum\limits_{i=1}^{k-1} h^v_i, \sum\limits_{i=1}^{k-1} h^v_i\right\rangle \right]\\
&+ \underset{v \sim \pi_1}{\mathbb{E}}\left[\left\langle (N|_{v})_{k-1}^{1}(N|_v)_{k-1}^{k-1}(f_1)|_v, (N|_v)_{k-1}^{k-1}(f_1)|_v \right\rangle \right].
\end{align*}
The first term can now be analyzed by the inductive hypothesis:
\begin{align*}
   \left\langle (N|_{v})_{k-1}^{1}\sum\limits_{i=1}^{k-1} h^v_i, \sum\limits_{i=1}^{k-1} h^v_i\right\rangle &\leq \sum\limits_{i=1}^{k-2}\left(1-\frac{1}{k-i}\prod\limits_{j=i}^{k-2}(1-\gamma_j) \right) \langle h^v_i,h^v_i \rangle + \sum_{i \neq j} c_{i,j}\langle h^v_i,h^v_j \rangle\\
    &=\sum\limits_{i=2}^{k-1}\left(1-\frac{1}{k-i+1}\prod\limits_{j=i-1}^{k-2}(1-\gamma_j) \right) \langle (f_{i})|_v,(f_{i})|_v \rangle\\
    &+ \left(1-\frac{1}{k-1}\prod\limits_{j=1}^{k-2}(1-\gamma_j) \right)\norm{\left(1-(N|_v)_{k-1}^{k-1}\right)(f_1)|_v}_2^2\\ 
    &+ \sum_{i \neq j} c'_{i,j}\langle (f_i)|_v,(f_j)_v \rangle
\end{align*}
where we have re-applied the definition of $h^v_i$ and collected mixed terms as in \cite[Lemma 5.10]{gotlib2022fine}. Recall that by Garland's method for any two functions $g,g'$ we have
\[
\underset{v \sim \pi_1}{\mathbb{E}}[\langle g|_v,g'|_v \rangle] = \langle g,g' \rangle,
\]
and thus altogether that
\begin{align*}
\langle N_k^{1}f, f \rangle &\leq \sum\limits_{i=2}^{k-1}\left(1-\frac{1}{k-i+1}\prod\limits_{j=i-1}^{k-2}(1-\gamma_j) \right) \langle f_i,f_i \rangle\\
    &+ \underset{v \sim \pi_1}{\mathbb{E}}\left[\left(1-\frac{1}{k-1}\prod\limits_{j=1}^{k-2}(1-\gamma_j) \right)\norm{\left(1-(N|_v)_{k-1}^{k-1}\right)(f_1)|_v}_2^2 + \norm{(N|_v)_{k-1}^{k-1}(f_1)|_v}_2^2 \right]\\ 
    &+ \sum_{i \neq j} c'_{i,j}\langle f_i,f_j \rangle
\end{align*}
where we have used the fact that $N_k^1(N|_v)_{k-1}^{k-1}(f_1)|_v=(N|_v)_{k-1}^{k-1}(f_1)|_v$ since the latter is constant. For notational simplicity let
\[
\lambda_{i,k} \coloneqq \left(1-\frac{1}{k-i+1}\prod\limits_{j=i-1}^{k-2}(1-\gamma_j) \right).
\]
It is therefore left to prove the following claim:
\begin{claim}[{\cite[Theorems 5.8,7.9]{gotlib2022fine}}]
\begin{equation}
    \underset{v \sim \pi_1}{\mathbb{E}}\left[\lambda_{k,2}\norm{\left(1-(N|_v)_{k-1}^{k-1}\right)(f_1)|_v}_2^2 + \norm{(N|_v)_{k-1}^{k-1}(f_1)|_v}_2^2 \right] \leq \lambda_{k,1}\norm{f_1}_2^2
\end{equation}
\end{claim}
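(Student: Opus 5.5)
The plan is to reduce the claim to an exact algebraic identity. Write $P_v \coloneqq (N|_v)^{k-1}_{k-1}$ for the operator on $C_{k-1}(X_v)$ that replaces a function by its mean, i.e.\ the orthogonal projection onto constants in the link. Because it is an orthogonal projection, Pythagoras gives
\[
\norm{(1-P_v)(f_1)|_v}_2^2 = \norm{(f_1)|_v}_2^2 - \norm{P_v(f_1)|_v}_2^2 .
\]
Substituting this, the left-hand side of the claim becomes
\[
\lambda_{k,2}\,\underset{v \sim \pi_1}{\mathbb{E}}\left[\norm{(f_1)|_v}_2^2\right] + (1-\lambda_{k,2})\,\underset{v\sim\pi_1}{\mathbb{E}}\left[\norm{P_v(f_1)|_v}_2^2\right].
\]
By Garland's method (\Cref{eq:gar-1}), the first expectation equals $\norm{f_1}_2^2$.

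For the second term I would invoke \Cref{lemma:advantage} with $g=f_1$. Its hypothesis holds: $f_1 \in \text{Ker}(D^k_0)$, so $\mathbb{E}[f_1]=\langle f_1,\vec 1\rangle = 0$, meaning $f_1\perp \vec 1$. Also $1-\lambda_{k,2}\geq 0$, so we may multiply by it. The lemma then yields
\[
\underset{v}{\mathbb{E}}\left[\norm{P_v(f_1)|_v}_2^2\right]\leq \left(1-\tfrac{k-1}{k}(1-\gamma_0)\right)\norm{f_1}_2^2 .
\]
Consequently the left-hand side is at most
\[
\left[1-(1-\lambda_{k,2})\tfrac{k-1}{k}(1-\gamma_0)\right]\norm{f_1}_2^2 .
\]

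The final step is the identity $(1-\lambda_{k,2})\frac{k-1}{k}(1-\gamma_0) = 1-\lambda_{k,1}$. It follows from the definitions:
\[
1-\lambda_{k,2} = \frac{1}{k-1}\prod_{j=1}^{k-2}(1-\gamma_j),
\]
so multiplying by $\frac{k-1}{k}(1-\gamma_0)$ gives $\frac{1}{k}\prod_{j=0}^{k-2}(1-\gamma_j) = 1-\lambda_{k,1}$. This closes the claim, with equality in the final step.

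I do not expect a real obstacle. The only points needing care are checking that $P_v$ is genuinely the orthogonal projection onto constants in $C_{k-1}(X_v)$, which holds since $N^{k-1}_{k-1} = U^{k-1}_0 D^{k-1}_0$ returns the $\pi_{v}$-mean, and keeping the paper's swapped index convention ($\lambda_{i,k}$ versus $\lambda_{k,i}$) consistent with the earlier definition.
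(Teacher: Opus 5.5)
Your proposal is correct and is essentially the paper's proof: split $\norm{(f_1)|_v}_2^2$ by Pythagoras for the projection onto constants in the link, apply \Cref{eq:gar-1} to the first term and \Cref{lemma:advantage} (valid since $f_1 \perp \vec{1}$) to the second, and finish with the identity $(1-\lambda_{k,2})\frac{k-1}{k}(1-\gamma_0)=1-\lambda_{k,1}$. Two details in your write-up are handled more carefully than in the paper. You note explicitly that $1-\lambda_{k,2}\geq 0$ before multiplying through. Your final computation is also stated correctly, whereas the paper's displayed version has typos: it writes $\frac{k}{k+1}$ where $\frac{k-1}{k}$ is meant, and $1-(1-\lambda_{k,2})(1-c)$ where $1-(1-\lambda_{k,2})c$ is meant.
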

This follows from combining several more general claims in Gotlib-Kaufman, but we'll repeat the direct version here for completeness. GK observe the lefthand side can be re-written as
\begin{align*}
    &\underset{v \sim \pi_1}{\mathbb{E}}\left[\lambda_{k,2}\norm{\left(1-(N|_v)_{k-1}^{k-1}\right)(f_1)|_v}_2^2 + \lambda_{k,2}\norm{(N|_v)_{k-1}^{k-1}(f_1)|_v}_2^2 + (1-\lambda_{k,2})\norm{(N|_v)_{k-1}^{k-1}(f_1)|_v}_2^2 \right] \\
    =&\lambda_{k,2}\underset{v \sim \pi_1}{\mathbb{E}}[\norm{(f_1)|_v}_2^2] + (1-\lambda_{k,2})\underset{v \sim \pi_1}{\mathbb{E}}\left[\norm{(N|_v)_{k-1}^{k-1}(f_1)|_v}_2^2 \right]\\
    =&\lambda_{k,2}\norm{f_1}_2^2 + (1-\lambda_{k,2})\underset{v \sim \pi_1}{\mathbb{E}}\left[\norm{(N|_v)_{k-1}^{k-1}(f_1)|_v}_2^2 \right]
\end{align*}
With this in mind, we are finally ready to apply \Cref{lemma:advantage}
\[
\underset{v \sim \pi_1}{\mathbb{E}}\left[\norm{(N|_v)_{k-1}^{k-1}(f_1)|_v}_2^2 \right] \leq \left(1 - \frac{k-1}{k}(1-\gamma_0)\right)\norm{f_1}_2^2
\]
which completes the proof since
\begin{align*}
    \lambda_{k,2} + (1-\lambda_{k,2})\left(1 - \frac{k}{k+1}(1-\gamma_0)\right) &= 1 - (1-\lambda_{k,2})\left(1 - \frac{k}{k+1}(1-\gamma_0)\right)\\
    &=1- \frac{1}{k}\prod\limits_{j=0}^{k-2}(1-\gamma_j)\\
    &=\lambda_{k,1}
\end{align*}
as desired.
\end{proof}

\section{Garland's Lemma}
We prove the lower-walk variant of Garland's Lemma from \Cref{lemma:garland}.
\begin{lemma}[Garland's Lemma for Lower Walks]
Let $(X,\Pi)$ be a weighted $d$-dimensional simplicial complex. Then for any $i<k<d$, $f \in C_k$, $j \leq k-i$ and $\tau \in X(j)$:
\[
\langle f, N_k^i f \rangle = \mathbb{E}_{\tau \sim \pi_j}\left[\langle f|_\tau, N_{k-j}^i|_\tau f|_\tau \right\rangle]
\]
where $f|_\tau: X_\tau(k-i) \to \mathbb{R}$ is given by $f_\tau(\sigma) = f(\tau \cup \sigma)$, and $N_{k-j}^i|_\tau$ is the lower walk on the link $X_\tau$.
\end{lemma}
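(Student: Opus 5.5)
The identity is a double-counting over a canonical coupling. Both sides are quadratic forms. For the left-hand side, $\langle f, N_k^i f\rangle = \langle D^k_{k-i} f, D^k_{k-i} f\rangle$ by adjointness of $U$ and $D$. So it is the expected value of $f(\sigma)f(\sigma')$, where $\sigma\sim\pi_k$, $\rho\subset\sigma$ is a uniformly random $(k-i)$-subset, and $\sigma'\supset\rho$ is drawn from the conditional distribution of $k$-faces given that they contain $\rho$. I will describe the right-hand side by a sampling procedure of the same type and then check that the two joint laws on $(\sigma,\sigma')$ agree.

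Fix $\tau\in X(j)$ with $j\le k-i$. The localized walk $N^i_{k-j}|_\tau$ is, by definition, built from the link distribution $\Pi_\tau$. It samples $\sigma_\tau\sim\pi_{\tau,k-j}$, removes a uniformly random $i$-subset to get $\rho_\tau$ of size $k-j-i\ge 0$, and re-samples $\sigma'_\tau\supset\rho_\tau$ from the conditional link distribution. Two facts about links are needed.
\begin{itemize}
\item[(a)] If $\tau\sim\pi_j$ and $\sigma_\tau\sim\pi_{\tau,k-j}$, then $\tau\cup\sigma_\tau\sim\pi_k$, and conditioned on $\tau\cup\sigma_\tau=\sigma$, the set $\tau$ is a uniform $j$-subset of $\sigma$.
\item[(b)] The conditional law of a top face of $X_\tau$ containing $\rho_\tau$ coincides with the conditional law in $X$ of a top face containing $\tau\cup\rho_\tau$ (with $\tau$ stripped off). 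This holds because both are proportional to $\pi_d$ restricted to faces containing $\tau\cup\rho_\tau$.
\end{itemize}
Both facts follow by writing $\pi_{\tau,m}$ as the marginal of $\pi_{\tau,d-j}$, which is proportional to $\pi_d(\cdot\cup\tau)$. The resulting binomial normalizations $\binom{d}{j}$, $\binom{d-j}{k-j}$, $\binom{d}{k}$, $\binom{k}{j}$ cancel via $\binom{d}{j}\binom{d-j}{k-j}=\binom{d}{k}\binom{k}{j}$.

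With (a) and (b) in hand, the right-hand side is the expected value of $f(\sigma)f(\sigma')$ under the following procedure. Draw $\sigma\sim\pi_k$. Choose $\tau$ uniform of size $j$ inside $\sigma$, then a uniform $i$-subset of $\sigma\setminus\tau$ to delete. This leaves $\rho=\tau\cup\rho_\tau$ of size $k-i$, and the procedure re-samples $\sigma'\supset\rho$ from the global conditional law. The marginal of $\rho$ given $\sigma$ is a uniform $(k-i)$-subset of $\sigma$, since any ordered choice (first $\tau$, then the complement to remove) yields a uniform $(k-i)$-subset by symmetry. Moreover, $\sigma'$ depends only on $\rho$. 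Hence the joint law of $(\sigma,\sigma')$ is exactly the one underlying $\langle f,N_k^i f\rangle$, and the identity follows.

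Everything except fact (a) is bookkeeping. I expect the weight computation in (a) to be the main obstacle, because the normalization of $\pi_{\tau,\cdot}$ must be tracked carefully. Alternatively, I could prove the $j=1$ case, \Cref{eq:gar-2}, in this way and iterate, since links of links are links and $\pi_j$ factors as $\pi_1$ followed by $\pi_{v,j-1}$. That route reduces the whole argument to the single-vertex computation.
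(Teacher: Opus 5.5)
Your proof is correct and rests on the same two ingredients as the paper's. The first is the weight identity $\pi_j(\tau)\pi_{\tau,m}(\sigma)=\pi_{j+m}(\tau\cup\sigma)/\binom{j+m}{j}$. The second is the fact that conditional laws in $X_\tau$ agree with those in $X$, so that $D^{k-j}_{k-i-j}|_\tau f|_\tau(\sigma)=D^k_{k-i}f(\tau\cup\sigma)$; a double count then finishes. The only difference is packaging: the paper first rewrites both sides as $\|D f\|^2$ and applies the weight identity at level $k-i$ (with $m=k-i-j$), which makes your coupling on $k$-faces and the symmetry argument about the deleted $i$-set unnecessary.
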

\begin{proof}
Since $D$ and $U$ are adjoint, we can write:
\begin{align*}
\mathbb{E}_{\tau \sim \pi_j}\left[\langle f|_\tau, N_{k-j}^i|_\tau f|_\tau \rangle \right] &= 
\mathbb{E}_{\tau \sim \pi_j}\left[\langle D^{k-j}_{k-i-j}|_\tau f|_\tau, D^{k-j}_{k-i-j}|_\tau f|_\tau \rangle \right]\\
&= \sum\limits_{\tau \in X(j)} \pi_j(\tau) \sum\limits_{\sigma \in X_\tau(k-i-j)} \pi_{\tau,k-i-j}(\sigma)(D^{k-j}_{k-i-j}|_\tau f|_\tau (\sigma))^2\\
&= \frac{1}{{k-i \choose j}}\sum\limits_{\tau \in X(j)} \sum\limits_{\sigma \in X_\tau(k-i-j)} \pi_{k-i}(\tau \cup \sigma)(D^{k-j}_{k-i-j}|_\tau f|_\tau (\sigma))^2.
\end{align*}
The key is now to observe that $D^{k-j}_{k-i-j}|_\tau f|_\tau (\sigma)$, the expected value of $f|_\tau$ over the link of $\sigma$ in $X_\tau$ is equivalent to the expectation of $f|_{\tau \cup \sigma}$ over the link $X_{\tau \cup \sigma}$, which is just $D^k_{k-i}f(\tau \cup \sigma)$. Therefore we have:
\begin{align*}
    \frac{1}{{k-i \choose j}}\sum\limits_{\tau \in X(j)} \sum\limits_{\sigma \in X_\tau(k-i-j)} \pi_{k-i}(\tau \cup \sigma)(D^{k-j}_{k-i-j}|_\tau f|_\tau (\sigma))^2 &= \frac{1}{{k-i \choose j}}\sum\limits_{\tau \in X(j)} \sum\limits_{\sigma \in X_\tau(k-i-j)} \pi_{k-i}(\tau \cup \sigma)(D^k_{k-i} f (\tau \cup \sigma))^2\\
    &= \sum\limits_{T \in X(k-i)}\pi_{k-i}(T) (D^k_{k-i}f(T))^2\\
    &= \langle f,N^i_k f \rangle,
\end{align*}
where the second to last step follows from noting that the inner term depends only on the union $\tau \cup \sigma \in X(k-i)$, each of which is hit ${k-i \choose j}$ times in the sum. 
\end{proof}
\section{Proof of \Cref{claim:binomial}}
\begin{claim}[\Cref{claim:binomial} Restated]
For all $k \in \mathbb{N}$, $j \leq k$, and $\ell \leq k-j$:
    \[
    \sum_{i=\ell}^{k-j}{k \choose i+j}\frac{{k-j-\ell \choose i-\ell}}{{k-j \choose i}}\rho^{i+j}(1-\rho)^{k-j-i} \leq \rho^\ell
    \]
\end{claim}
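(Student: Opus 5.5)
The plan is to turn the left-hand side into the expectation of a binomial coefficient evaluated at a $\mathcal{B}^\rho_k$-distributed random variable. Then two elementary facts finish the proof: a factorial-moment identity and a monotonicity bound.

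\textbf{Step 1 (simplify the ratio).} Expanding factorials gives
\[
\frac{{k-j-\ell \choose i-\ell}}{{k-j \choose i}} = \frac{(k-j-\ell)!\, i!\,(k-j-i)!}{(i-\ell)!\,(k-j-i)!\,(k-j)!} = \frac{{i \choose \ell}}{{k-j \choose \ell}}.
\]
So the left-hand side equals
\[
\frac{1}{{k-j\choose \ell}}\sum_{i=\ell}^{k-j}{k\choose i+j}{i\choose \ell}\rho^{i+j}(1-\rho)^{k-(i+j)}.
\]

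\textbf{Step 2 (reindex as a binomial expectation).} Substitute $m=i+j$, which ranges over $j+\ell \leq m \leq k$. Adopt the convention ${m-j \choose \ell}=0$ whenever $m-j<\ell$, including when $m-j<0$; then the range of $m$ can be extended to all of $\{0,\dots,k\}$. The sum becomes $\mathbb{E}_{M\sim\mathcal{B}^\rho_k}\left[{M-j\choose \ell}\right]$. The claim is therefore equivalent to
\[
\mathbb{E}_{M\sim \mathcal{B}^\rho_k}\left[{M-j\choose \ell}\right] \leq \rho^\ell {k-j\choose \ell}.
\]

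\textbf{Step 3 (compare with the shifted coefficient).} For $m\geq \ell$, consider the ratio
\[
\frac{{m-j\choose \ell}}{{m\choose \ell}}=\prod_{t=0}^{\ell-1}\frac{m-j-t}{m-t}.
\]
This ratio is nonnegative under our convention. Each factor $1-\frac{j}{m-t}$ is nondecreasing in $m$ while the factors are nonnegative. Hence the ratio is at most its value at $m=k$, namely ${k-j\choose\ell}/{k\choose\ell}$. For $m<\ell$ both sides of the comparison are $0$. It follows that ${M-j\choose\ell}\leq \frac{{k-j\choose \ell}}{{k\choose \ell}}{M\choose \ell}$ pointwise.

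\textbf{Step 4 (factorial moment).} Write $M$ as a sum of $k$ independent $\rho$-coins. Then ${M\choose\ell}$ counts the $\ell$-subsets of coordinates that are all successes. Taking expectations gives $\mathbb{E}\left[{M\choose \ell}\right]={k\choose\ell}\rho^\ell$. Combining this with Step 3 yields
\[
\mathbb{E}\left[{M-j\choose\ell}\right]\leq {k-j\choose\ell}\rho^\ell,
\]
which is the claim.

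\textbf{Main obstacle.} The only delicate point is bookkeeping in Steps 2--3: the convention for binomials with small or negative upper index must be applied consistently. This includes checking that the terms with $m<j+\ell$, which are absent from the original sum, contribute $0$, and that the monotonicity argument is only used where every factor is nonnegative. Everything else is routine. In particular, the restriction $\rho\leq 1-1/e$ from \Cref{cor:noisy-garland2} is not needed here.
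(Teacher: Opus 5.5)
Your proof is correct and is essentially the paper's argument in probabilistic language. Using $\binom{k-j-\ell}{i-\ell}/\binom{k-j}{i}=\binom{i}{\ell}/\binom{k-j}{\ell}$ and $\binom{k}{m}\binom{m}{\ell}=\binom{k}{\ell}\binom{k-\ell}{m-\ell}$, your pointwise bound $\binom{m-j}{\ell}\le\frac{\binom{k-j}{\ell}}{\binom{k}{\ell}}\binom{m}{\ell}$ is exactly the paper's termwise bound of each coefficient by $\binom{k-\ell}{i+j}$. You prove it with the same monotone ratio, since $\binom{m-j}{\ell}/\binom{m}{\ell}=\binom{m-\ell}{j}/\binom{m}{j}$ is the paper's ratio up to its normalization at $m=k$, and your factorial-moment step is the paper's closing binomial-theorem step.

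One small wrinkle: for $\ell\le m<j$ the displayed product $\prod_{t=0}^{\ell-1}\frac{m-j-t}{m-t}$ disagrees with your zero convention (e.g.\ $k=7$, $j=5$, $\ell=2$, $m=3$ gives $1>\frac{1}{21}$). So Step 3 should say the inequality is trivial for all $m<j+\ell$ and use monotonicity only for $j+\ell\le m\le k$, which is what your closing remark intends.
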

\begin{proof}
We first re-write the sum to be $0$-indexed for simplicity:
    \begin{align*}
    \sum_{i=\ell}^{k-j}{k \choose i+j}\frac{{k-j-\ell \choose i-\ell}}{{k-j \choose i}}\rho^{i+j}(1-\rho)^{k-j-i}= \sum_{i=0}^{k-j-\ell}\frac{{k \choose i+j+\ell}{k-j-\ell \choose i}}{{k-j \choose i+\ell}}\rho^{i+j+\ell}(1-\rho)^{k-j-\ell-i}
    \end{align*}
The key is to observe that by standard counting arguments:
\begin{equation}\label{eq:binomial}
\frac{{k \choose i+j+\ell}{k-j-\ell \choose i}}{{k-j \choose i+\ell}} \leq {k-\ell \choose i+j},
\end{equation}
since by the Binomial Theorem we then have
\begin{align*}
    \sum_{i=0}^{k-j-\ell}{k \choose i+j+\ell}\frac{{k-j-\ell \choose i}}{{k-j \choose i+\ell}}\rho^{i+j+\ell}(1-\rho)^{k-j-\ell-i} &\leq \rho^{\ell}\sum_{i=0}^{k-j-\ell}{k-\ell \choose i+j}\rho^{i+j}(1-\rho)^{k-j-\ell-i}\\
    &\leq \rho^\ell\sum_{z=0}^{k-\ell}{k-\ell \choose z} \rho^{z}(1-\rho)^{k-\ell-z}\\
    &= \rho^\ell
\end{align*}
as desired. Proving \Cref{eq:binomial} is tedious but elementary. Standard binomial manipulations give
\begin{align*}
    \frac{{k \choose i+j+\ell}{k-j-\ell \choose i}}{{k-j \choose i+\ell}{k-\ell \choose i+j}}  &= \frac{{k \choose j}}{{k-\ell \choose j}}\cdot \frac{{i+j \choose j}}{{i+j+\ell \choose j}},
\end{align*}
and the result follows from observing the RHS is increasing in $i$, and is exactly $1$ when $i=k-j-\ell$.
\end{proof}
\end{document}